\begin{document}
\newcommand{\R}{\mathbb R}
\newcommand{\Z}{\mathbb Z}
\newcommand{\E}{\mathbb E}
\newcommand{\1}{\mathbbm 1}
\newcommand{\N }{\mathbb N}
\newcommand{\q }{\mathbb Q}
\newcommand{\p }{\mathbb P}
\newcommand{\rr }{\mathsf R}
\newcommand{\D }{\mathcal D}
\newcommand{\F }{\mathcal F}
\newcommand{\G }{\mathcal G}
\newcommand{\h }{\mathcal H}
\newcommand{\M }{\mathcal M}
\newcommand{\eps}{\varepsilon}
\newcommand{\argmax}{\operatornamewithlimits{argmax}}
\newcommand{\argmin}{\operatornamewithlimits{argmin}}
\newcommand{\esssup}{\operatornamewithlimits{esssup}}
\newcommand{\essinf}{\operatornamewithlimits{essinf}}
\newcommand{\sgn}{\operatorname{sgn}}

\newcommand{\nc}{\newcommand}
\nc{\bg}{\begin} \nc{\e}{\end} \nc{\bi}{\begin{itemize}} \nc{\ei}{\end{itemize}} \nc{\be}{\begin{enumerate}} \nc{\ee}{\end{enumerate}} \nc{\bc}{\begin{center}} 
\nc{\ec}{\end{center}} \nc{\fn}{\footnote} \nc{\bs}{\backslash} \nc{\ul}{\underline} \nc{\ol}{\overline} 
\nc{\np}{\newpage}  \nc{\fns}{\footnotesize} 
\nc{\scs}{\scriptsize} \nc{\RA}{\Rightarrow} \nc{\ra}{\rightarrow} \nc{\bfig}{\begin{figure}} \nc{\efig}{\end{figure}} \nc{\can}{\citeasnoun} 
\nc{\vp}{\vspace} \nc{\hp}{\hspace}\nc{\LRA}{\Leftrightarrow}\nc{\LA}{\Leftarrow}
\renewcommand{\tilde}{\widetilde}
\nc{\eq}{\end{equation}} 

\nc{\ch}{\chapter}
\nc{\s}{\section}
\nc{\subs}{\subsection}
\nc{\subss}{\subsubsection}

\newtheorem{thm}{Theorem}[section]
\newtheorem{cor}[thm]{Corollary}
\newtheorem{lem}[thm]{Lemma}

\theoremstyle{remark}
\newtheorem{rem}[thm]{Remark}

\theoremstyle{example}
\newtheorem{ex}[thm]{Example}

\theoremstyle{ass}
\newtheorem{ass}[thm]{Assumption}

\theoremstyle{definition}
\newtheorem{df}[thm]{Definition}

\newenvironment{rcases}{
  \left.\renewcommand*\lbrace.
  \begin{cases}}
{\end{cases}\right\rbrace}

\setlength\parindent{0pt}
\pagestyle{plain}
\setitemize{leftmargin=20pt} 
\setenumerate{leftmargin=20pt} 

\title{A diffusion approximation for limit order book models}


   \author{Ulrich Horst}
   \address{Humboldt-Universit\"at zu Berlin, Germany}
   \email{horst@math.hu-berlin.de}

   \author{D\"orte Kreher}
   \address{Humboldt-Universit\"at zu Berlin, Germany}
   \email{kreher@math.hu-berlin.de}

	\begin{abstract}
	
This paper derives a diffusion approximation for a sequence of discrete-time one-sided limit order book models with non-linear state dependent order arrival and cancellation dynamics. The discrete time sequences are specified in terms of an $\R_+$-valued best bid price process and an $L^2_{loc}$-valued volume process. It is shown that under suitable assumptions the sequence of interpolated discrete time models is relatively compact in a localized sense and that any limit point satisfies a certain infinite dimensional SDE. Under additional assumptions on the dependence structure we construct two classes of models, which fit in the general framework, such that the limiting SDE admits a unique solution and thus the discrete dynamics converge to a diffusion limit in a localized sense. 

	\end{abstract}

  \subjclass[2010]{60F17, 91G80}
   \keywords{Functional limit theorem, diffusion limit, scaling limit, convergence of stochastic differential equations, limit order book}

\thanks{This research was partially supported by CRC 649: Economic Risk. Moreover, part of this research was performed while the second author was visiting the Institute for Pure and Applied Mathematics (IPAM), which is supported by the National Science Foundation. A previous version of this paper was entitled ``A functional convergence theorem for interpolated Markov chains to an infinite dimensional diffusion with application to limit order books.''}

\maketitle

\renewcommand{\baselinestretch}{1.15}\normalsize
\setlength{\parskip}{5pt}

\s{Motivation and setup}

In modern financial markets almost all transactions are settled through limit oder books (LOBs). A LOB is a record of unexecuted orders awaiting execution. Stochastic analysis provides powerful tools for understanding the complex system of order aggregation and execution in limit order markets via the description of suitable scaling (``high-frequency'') limits. Scaling limits allow for a tractable description of the macroscopic LOB dynamics (prices and standing volumes) from the underlying microscopic dynamics (individual order arrivals and cancellations). In this paper we prove a novel functional convergence result for a class of Markov chains arising in microstructure  models of LOBs to an infinite dimensional diffusion.

Scaling limits for LOBs have recently attracted considerable attention in the probability and finnacial mathematics literature. Depending on the scaling assumptions either fluid limits (cf.~\cite{Gao,Guo,HK1,HP}) or diffusion limits (cf.~\cite{BHQ,Cont3,Rosenbaum}) can be derived. Fluid limits for the full order book were first studied in \cite{HP} and afterwards in \cite{HK1}, where it was shown that under certain assumptions on the scaling parameters the sequence of discrete-time LOB models converges in probability to the solution of a deterministic differential equation. 
Although there is some work on probabilistic LOB models that assumes an SPDE or measure-valued dynamics for the volume process (cf.~\cite{Hubalek,Marvin}), there is little work on the derivation of a measure valued diffusion limit starting from a microscopic (``event-by-event'') description of the limit order book. Two exceptions are the particular models considered in \cite{BHQ} and \cite{Lakner2}. The work \cite{BHQ} extends the models in \cite{HP} and \cite{HK1} by introducing additional noise terms in the pre-limit in which case the dynamics can then be approximated by an SPDE in the scaling limit. The papers \cite{BHQ, HK1, HP} rely on the same scaling assumptions. Our work is motivated by the question whether under {\it different} scaling assumptions the same event-by-event dynamics can be approximated by a diffusion process in the high frequency regime {\it without} adding additional noise terms in the pre-limit. 

\subsection{The LOB dynamics} 

The one-sided LOB models considered in this paper are specified by a sequence of discrete time $\R \times L^2(\R_+;\R)$-valued processes $\tilde{S}^{(n)}=\left(B^{(n)},v^{(n)}\right)$, where for each $n\in\N$,  the non-negative one dimensional process $B^{(n)}$ specifies the dynamics of the {\it best bid price}, and the $L^2(\R_+;\R)$-valued process $v^{(n)}$ specifies the dynamics of the bid-side {\it volume density function}. 

We fix some $T>0$ and introduce the scaling parameters $\Delta x^{(n)}, \Delta v^{(n)},$ and $\Delta t^{(n)}$. They denote the tick-size, the impact of an individual order on the state of the book, and the time between two consecutive order arrivals, respectively. We put $T_n:=\left\lfloor T /\Delta t^{(n)}\right\rfloor$, $x_j^{(n)}:=j\Delta x^{(n)}$ and $t_j^{(n)}:=j\Delta t^{(n)}\wedge T$ for all $j\in\N_0$ and $n\in\N$. For all $n\in\N$ and $x\in\R_+$ we define the interval $I^{(n)}(x)$ as
\[
	I^{(n)}(x):=\left[x_j^{(n)},x_{j+1}^{(n)}\right)\quad\text{for}\quad x_j^{(n)}\leq x<x_{j+1}^{(n)}.
\]

The initial best bid price is given by $B_0^{(n)}=b_n\Delta x^{(n)}$ for some $b_n\in\N$. The initial volume density function is given by a non-negative deterministic step function $v_0^{(n)}\in L^2(\R_+;\R)$  on the $\Delta x^{(n)}$-grid. Following the modelling framework of \cite{HK1} we assume that there are three events that change the state of the book: price increases (event $A$), price decreases (event $B$) and limit order placements, respectively cancellations (event $C$). In terms of the placement operator
\begin{equation}
	M_{k}^{(n)}(\cdot):=\1_{C}\left(\phi_k^{(n)}\right)\frac{\omega_k^{(n)}}{\Delta x^{(n)}} \1_{I^{(n)}\left(\pi_k^{(n)}\right)}(\cdot)
\end{equation}
the dynamics of the one-sided LOB models can then be described by the following point process:  for each $n\in\N$ and all $k=1,\dots,T_n$,  
\begin{equation}\label{Bv}
\begin{split}
	B_{k}^{(n)} &= B^{(n)}_{k-1}+\Delta x^{(n)}\left[\1_B\left(\phi_k^{(n)}\right)- \1_A\left(\phi_k^{(n)}\right)\right]\\
	v_{k}^{(n)} &= v_{k-1}^{(n)}+\Delta v^{(n)}M_{k}^{(n)}
\end{split}
\end{equation}
where the event indicator function $\phi_k^{(n)}$ is a random variable taking values in the set $\{A,B,C\}$, the $[-M,M]$-valued random variable $\omega_k^{(n)}$ specifies the size of a placement or cancellation $(M>0)$, and the non-negative random variable $\pi_k^{(n)}$ specifies the location of a placement or cancellation.  

\subsection{Preview of the main results}

In deriving a diffusion limit for the sequence of LOB models (\ref{Bv}), the first challenge is to define a suitable convergence concept. While for any $\pi\in\R_+$,
\[\left\Vert\1_{I^{(n)}\left(\pi\right)}\right\Vert_{L^2(\R_+)}=\left(\Delta x^{(n)}\right)^{1/2},\]
we have for any bounded $f\in L^2(\R_+)$,
\[\left\langle\1_{I^{(n)}\left(\pi\right)},f\right\rangle_{L^2(\R_+)}=\int_{I^{(n)}\left(\pi\right)}f(x)dx=\mathcal{O}\left(\Delta x^{(n)}\right).\]
Hence, it seems impossible to formulate a scaling assumption with $\Delta x^{(n)}\ra0,\ \Delta t^{(n)}\ra0,$ and $\Delta v^{(n)}\ra0$  that allows to prove convergence of the volume density functions to an $L^2(\R_+;\R)$-valued diffusion process. However, observe that for any $m,\pi>0$ we have
\[\left\Vert\Delta x^{(n)}\sum_{j=0}^{\lfloor \cdot/\Delta x^{(n)}\rfloor}\1_{I^{(n)}\left(\pi\right)}\left(x_j^{(n)}\right)\1_{[0,m]}(\cdot)\right\Vert_{L^2}=\Delta x^{(n)}
\left(m-\Delta x^{(n)}\left\lfloor\frac{\pi}{\Delta x^{(n)}}\right\rfloor\right)^{1/2}=
\mathcal{O}\left(\Delta x^{(n)}\right)\]
and for any bounded $f\in L^2$ also
\[\left\langle\Delta x^{(n)}\sum_{j=0}^{\lfloor \cdot/\Delta x^{(n)}\rfloor}\1_{I^{(n)}\left(\pi\right)}\left(x_j^{(n)}\right),f\1_{[0,m]}\right\rangle_{L^2} 
=\Delta x^{(n)}\int_{\Delta x^{(n)}\left\lfloor\frac{\pi}{\Delta x^{(n)}}\right\rfloor}^m f(x)dx=\mathcal{O}\left(\Delta x^{(n)}\right).\]
This suggests to study the convergence of the cumulated volume processes $V^{(n)}=\left(V^{(n)}_k\right)_{k\leq T_n}$ with 
\begin{equation}\label{cumv}
V^{(n)}_k(x):=\Delta x^{(n)}\sum_{j=0}^{\lfloor x/\Delta x^{(n)}\rfloor}v^{(n)}_k\left(x_j^{(n)}\right),\quad x\in\R_+,
\end{equation}
instead of analyzing directly the convergence of the volume density functions. To do this we will choose a localized convergence concept, since the functions $V^{(n)}$ are not square integrable on the whole line. 

Our main contribution is to establish a convergence concept and a convergence result for the sequence $S^{(n)} := \left(B^{(n)}, V^{(n)}\right)$, ${n \in \mathbb{N}}$. In particular, we state sufficient conditions that guarantee that (i) this sequence is relatively compact; (ii) any limit point solves an infinite dimensional SDE driven by a standard Brownian and a cylindrical Brownian motion; (iii) the limiting SDE has a unique solution. 

Having established a convergence concept, the second major challenge is that the dynamics of the process $S^{(n)}$, $n \in \N$, is not given in standard SDE form, due to the event-by-event dynamics, and that the system can only be controlled by specifying the conditional distribution of the random variables $\pi_k^{(n)}, \omega_k^{(n)},$ and $\phi_k^{(n)}$. Much of our work is, therefore, devoted to the identification of suitable integrands $G^{(n)}\left({S}^{(n)}(t)\right)$ and semimartingale random measures $Y^{(n)}$ such that $S^{(n)}(t)$ can be represented as
\begin{equation} \label{Sn}
	S^{(n)}(t) = S^{(n)}_0 +\int_0^tG^{(n)}\left({S}^{(n)}(u)\right)dY^{(n)}(u),\quad t\in[0,T]
\end{equation}
after continuous time-interpolation. Once the dynamics of the sequence $S^{(n)}$, $n \in \N$, has been brought into standard SDE form, it remains to study its convergence. The convergence of infinite dimensional stochastic integrals has been studied by several authors. Chao \cite{Cho} and Walsh \cite{Walsh} consider semimartingale random measures as distribution valued processes in some nuclear space. Kallianpur and Xiong \cite{Kallianpur} prove diffusion approximations of nuclear space-valued SDEs. Their approach requires a dependence structure that is incompatible with our spatial pointwise dynamics, and is hence not applicable to our modelling framework. Jakubowski \cite{Jakubowski} provides convergence results for Hilbert space valued semimartingales under a uniform tightness condition. Kurtz and Protter \cite{KP2} work with the same uniform tightness condition, but allow for a more general setting. Especially, they also study the convergence of solutions of stochastic differential equations in infinite dimension. The results are further extended by Ganguly \cite{Ganguly} to study the convergence of infinite dimensional stochastic differential equations when the approximating sequence of integrators is not uniformly tight anymore. 

Our proof relies on the results in \cite{KP2}. We first establish sufficient conditions that guarantee that the sequence $Y^{(n)}$, $n \in \N,$ converges to some $L^2(\R_+)^\#$-semimartingale $Y$. Subsequently we prove that the sequence $G^{(n)}$, $n \in \N,$ satisfies a compactness property and converges in a localised sense to some function $G$. Finally, we show that the sequence of stochastic differential equations in (\ref{Sn}) converges in law in a localised sense to a solution to an SDE of the form 
\begin{equation}
	{S}(t)=S_0+\int_0^tG\left({S}(u)\right)dY(u),\quad t\in[0,T].
\end{equation}
The challenge in proving the converges of the SDEs is the verification of the conditions in \cite{KP2} on the integrators and coefficient functions of the approximating sequence, and the fact that our convergence concept localises in space, not time. Finally, we give sufficient conditions for the uniqueness of solutions to the above SDE. For instance, we show that uniqueness holds if only the drift but not the volatility is state-dependent.

\subs{Structure of the paper}

The rest of the paper is structured as follows. In Section \ref{price} we state conditions on the dynamics of the price processes that guarantee the converge of their normalized fluctuations to a standard Brownian motion. In Section \ref{volume} we state conditions on the dynamics of the order arrivals and cancelations that guarantee convergence of the standardized fluctuations of the volume processes to a cylindrical Brownian motion. While the analysis of the price is quite standard, deriving similar results for the volumes is much more tedious. First we show in Subsections \ref{musigma} the convergence of the drift, volatility and correlation functions. Using an orthogonal decomposition of the covariance matrix we then establish in Subsection \ref{c} a representation of the volume process as a discrete stochastic differential equation driven by ``infinitely many discretised Brownian motions''. In Subsection \ref{gaussian} we prove the convergence in law of the ``infinitely many discretised Brownian motions'' to a cylindrical Brownian motion.  In Section \ref{integrals} we define the stochastic integrals and stochastic differential equations that describe the LOB dynamics and verify that the conditions from \cite{KP2} are satisfied. This allows us to derive our results on the characterisation of the limiting LOB dynamics as solutions to an infinite dimensional SDE in Section \ref{result}. We conclude with two specific examples in which the LOB dynamics converges weakly to the unique solution of an infinite dimensional SDE.

\subs{Notation} \label{setup}

For each $n\in\N$ we fix a probability space $\left(\Omega^{(n)},\F^{(n)},\p^{(n)}\right)$\footnote{For ease of notation we will simply write $\p$ and $\E$ in the following instead of $\p^{(n)}$ and $\E^{(n)}$, since it is clear from the context on which probability space we work.} with filtration 
\[\left\{\emptyset,\Omega^{(n)}\right\}=\F_0^{(n)}\subset \F_1^{(n)}\subset\dots\subset\F_k^{(n)}\subset\dots\subset\F^{(n)}_{T_n}\subset\F^{(n)}.\]
We assume that the random vector $\left(\phi_k^{(n)},\omega_k^{(n)},\pi_k^{(n)}\right)$ is $\F_k^{(n)}$-measurable for all $n\in\N$ and $k\leq T_n$. We define the Hilbert space
\[E:=\R \times L^2(\R_+;\R),\qquad \left\Vert (X_1,X_2)\right\Vert_E:=|X_1|+\left\Vert X_2\right\Vert_{L^2}\]
and its localized version
\[E_{loc}:=\R \times L^2_{loc}(\R_+;\R)\]
with
\[L^2_{loc}(\R_+):=\left\{f:\R_+\ra\R\ \left|\ \int_0^mf^2(x)dx<\infty\ \forall\ m\in\N\right.\right\}.\]
Moreover, we define for all $n\in\N$ the $E_{loc}$-valued stochastic process $S^{(n)}=\left(S^{(n)}_k\right)_{k=0,\dots,T_n}$
via
\[S^{(n)}_k:=\left(B^{(n)}_k,V^{(n)}_k\right),\]
where $B^{(n)}_k$ and $V^{(n)}_k$ were defined in equations (\ref{Bv}) and (\ref{cumv}). 
For all $n\in\N$ and $k=1,\dots,T_n$ we set 
\begin{alignat*}{2}
\delta V_k^{(n)}&:=V_k^{(n)}-V_{k-1}^{(n)},\  \qquad\qquad\qquad \delta B_k^{(n)}&&:=B_k^{(n)}-B_{k-1}^{(n)}, \\
\delta \hat{v}^{(n)}_k(x)&:=\E\left(\left.\delta V_k^{(n)}(x)\right|\F^{(n)}_{k-1}\right),\ \  \qquad\delta\hat{B}^{(n)}_k&&:=\E\left(\left.\delta B^{(n)}_k\right|\F^{(n)}_{k-1}\right), \\
\delta \ol{v}^{(n)}_k(x)&:=\delta V_k^{(n)}(x)-\delta \hat{v}^{(n)}_k(x), \quad\qquad\delta\ol{B}^{(n)}_k&&:=\delta B^{(n)}_k-\delta\hat{B}^{(n)}_k.
\end{alignat*}

W.l.o.g.~we will assume that $\left(\Delta x^{(n)}\right)^{-1}\in\N$ for all $n\in\N$.

\s{Fluctuations of the price process}\label{price}

In this section we analyse the fluctuations of the best bid price process $B^{(n)}$. To this end, we introduce a fourth scaling parameter $\Delta p^{(n)}=o(1)$ that controls the proportion of price changes among all events. The scaling limits in \cite{BHQ,HK1,HP} require two time scales, a fast time scale for limit order placements and cancellations and a comparably slow time scale for price changes. The scaling parameter $\Delta p^{(n)}$ introduces the ``slow'' time scale.  

\begin{ass}\label{prob1}
For each $n\in\N$ there exist two functions $p^{(n)}: E_{loc}\ra\R$ and $r^{(n)}: E_{loc}\ra\R_+$ satisfying the boundary condition
\begin{equation}\label{near0}
\left(r^{(n)}(s)\right)^2=\Delta x^{(n)}p^{(n)}(s)\quad\forall\ s=(0,v)\in E_{loc},
\end{equation} 
such that for all $k=1,\dots,T_n$,
\begin{equation}\label{r}
\p\left(\left.\phi_k^{(n)}\in \{A,B\}\ \right| \F_{k-1}^{(n)}\right)=\Delta p^{(n)}\left(r^{(n)}\left(S_{k-1}^{(n)}\right)\right)^2\quad\text{a.s.}
\end{equation}
and
\begin{equation}\label{p}
\p\left(\left.\phi_k^{(n)}=B\ \right| \F_{k-1}^{(n)}\right)-\p\left(\left.\phi_k^{(n)}=A\ \right| \F_{k-1}^{(n)}\right)=\Delta p^{(n)}\Delta x^{(n)}p^{(n)}\left(S_{k-1}^{(n)}\right)\quad\text{a.s.}
\end{equation}
There exists $\eta>0$ such that for all $n\in\N$ and $s\in E_{loc}$,
\begin{equation}\label{pricenonconstant}
r^{(n)}(s)+\left(p^{(n)}(s)\right)^+>\eta.
\end{equation}
\end{ass}

Note that the conditional distribution of the event variables is uniquely determined by equations (\ref{r}) and (\ref{p}).
Moreover, equation (\ref{near0}) guarantees that the price process $B^{(n)}$ will always stay positive. 

The next assumption controls the relative speed at which the different scaling parameters converge to zero. Since the discrete system dynamics are the same as in \cite{HK1}, we must use a different scaling to get a diffusion limit instead of a fluid limit. Intuitively, the average impact of all individual events must be of larger size to generate volatility. By comparing the scaling assumption from \cite{HK1} with Assumption \ref{scaling} below, we see that this is indeed the case.

\begin{ass}\label{scaling}
For all $n\in\N$,
\[\Delta t^{(n)}=\Delta p^{(n)}\left(\Delta x^{(n)}\right)^2=\left(\Delta v^{(n)}\right)^2=o(1).\]
\end{ass}

\begin{rem}
The fact that the conditional distribution of the event variables is uniquely determined by equations (\ref{p}) and (\ref{r}) is different from the corresponding assumption made in \cite{HK1} to derive a large of large numbers in the high frequency regime.  Indeed, while (\ref{r}) can also be found in \cite{HK1}, (\ref{p}) is the only important additional assumption - apart from the different scaling - which is needed to derive a diffusion dynamic for the price process in the high frequency limit. A similar assumption can also be found in \cite{BHQ}.
\end{rem}

The equations (\ref{r}) and (\ref{p}) of Assumption \ref{prob1} yield together with Assumption \ref{scaling} that for all $n\in\N$ and $k\leq T_n$ almost surely
\begin{eqnarray*}
\Delta t^{(n)}\left[r^{(n)}\left(S_{k-1}^{(n)}\right)\right]^2&=&\E\left[\left.\left(\delta B^{(n)}_k\right)^2\right|\F^{(n)}_{k-1}\right],\\
\Delta t^{(n)}p^{(n)}\left(S_{k-1}^{(n)}\right)&=&\E\left[\left.\delta B^{(n)}_k\right|\F^{(n)}_{k-1}\right]=\delta \hat{B}_k^{(n)}.
\end{eqnarray*}
Let us define the process of the (nearly) normalized increments of $B^{(n)}$ as
\begin{equation} \label{Zn}
	\delta Z^{(n)}_k:=\frac{\delta \ol{B}^{(n)}_k}{r^{(n)}\left(S^{(n)}_{k-1}\right)},\quad Z_k^{(n)}:=\sum_{j=1}^k\delta Z^{(n)}_j\quad\text{for all }k=1,\dots,T_n.
\end{equation}

Then we may write for all $n\in\N$,
\begin{equation}\label{SDE_B}
\begin{split}
B^{(n)}(t)&=B_0^{(n)}+\sum_{k=1}^{\lfloor t/\Delta t^{(n)}\rfloor}\delta B_k^{(n)} \\
&=B_0^{(n)}+\sum_{k=1}^{\lfloor t/\Delta t^{(n)}\rfloor} \left[p^{(n)}\left(S^{(n)}_{k-1}\right)\Delta t^{(n)}+r^{(n)}\left(S^{(n)}_{k-1}\right)\delta Z^{(n)}_k\right]
\end{split}
\end{equation}

Through linear interpolation of the $Z^{(n)}_k,\ k=1,\dots,T_n$, we obtain the continuous time process
\begin{equation*} 
	Z^{(n)}(t):=\sum_{k=0}^{T_n}Z_k^{(n)}\1_{\left[t_k^{(n)},t^{(n)}_{k+1}\right)}(t),\quad t\in[0,T].
\end{equation*}

\begin{thm}\label{Z}
Under Assumptions \ref{prob1} and \ref{scaling}, $Z^{(n)}=\left(Z^{(n)}(t)\right)_{t\in[0,T]}$ converges weakly in $\mathcal{D}\left([0,T];\R\right)$ to a standard Brownian motion $Z$ as $n\ra\infty$.
\end{thm}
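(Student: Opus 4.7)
The plan is to apply a functional martingale central limit theorem. First, I would observe that by construction $(Z^{(n)}_k)_{k\leq T_n}$ is a martingale with respect to $(\F_k^{(n)})_k$, since $\delta\ol B^{(n)}_k$ is a martingale difference and the normalizer $r^{(n)}(S^{(n)}_{k-1})$ is $\F^{(n)}_{k-1}$-measurable. Its piecewise-constant time interpolation $Z^{(n)}$ is then a càdlàg martingale, and I would invoke a classical martingale invariance principle such as Theorem 7.1.4 in Ethier--Kurtz or Theorem VIII.3.11 in Jacod--Shiryaev. To deduce convergence to a standard Brownian motion it suffices to verify (a) that the predictable quadratic variation $\langle Z^{(n)}\rangle_t$ converges in probability to $t$ for each $t\in[0,T]$, and (b) a Lindeberg-type condition on the jumps, e.g.\ $\E[\sup_{k\leq T_n}|\delta Z^{(n)}_k|^2]\to 0$.

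For (a), the relations displayed just before \eqref{Zn} give
\[
\E\bigl[(\delta Z^{(n)}_k)^2\mid\F^{(n)}_{k-1}\bigr]
=\frac{\Delta t^{(n)}\bigl[r^{(n)}(S^{(n)}_{k-1})\bigr]^2-\bigl(\Delta t^{(n)}p^{(n)}(S^{(n)}_{k-1})\bigr)^2}{\bigl[r^{(n)}(S^{(n)}_{k-1})\bigr]^2}
=\Delta t^{(n)}-\frac{(\delta\hat B^{(n)}_k)^2}{[r^{(n)}(S^{(n)}_{k-1})]^2}.
\]
Summed over $k\leq \lfloor t/\Delta t^{(n)}\rfloor$ the first term gives $\lfloor t/\Delta t^{(n)}\rfloor\Delta t^{(n)}\to t$, so the task reduces to showing that the total correction is $o(1)$. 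Where $r^{(n)}$ is bounded away from $0$ by $\eta$, each summand is $O((\Delta t^{(n)})^2)$ and the sum is $O(\Delta t^{(n)})$. In the complementary regime the boundary condition \eqref{near0} forces $(r^{(n)})^2=\Delta x^{(n)} p^{(n)}$, in which case the ratio becomes $(\Delta t^{(n)})^2 p^{(n)}/\Delta x^{(n)}=\Delta t^{(n)}\cdot\Delta p^{(n)}\Delta x^{(n)}\,p^{(n)}$ thanks to the scaling of Assumption \ref{scaling}, again of order $o(\Delta t^{(n)})$ provided $p^{(n)}$ is locally bounded on the trajectory; summing $T_n$ such terms yields the required $o(1)$.

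For (b), since $|\delta B^{(n)}_k|\leq\Delta x^{(n)}$ and $|\delta\hat B^{(n)}_k|=\Delta t^{(n)}|p^{(n)}|\ll\Delta x^{(n)}$ under Assumption \ref{scaling}, one has $|\delta Z^{(n)}_k|\leq C\Delta x^{(n)}/r^{(n)}(S^{(n)}_{k-1})$. Where $r^{(n)}\geq\eta$ this is $O(\Delta x^{(n)})=o(1)$. In the boundary regime condition \eqref{pricenonconstant} combined with \eqref{near0} gives $r^{(n)}\geq\sqrt{\Delta x^{(n)}\eta\,'}$ for some $\eta'>0$, so $|\delta Z^{(n)}_k|=O(\sqrt{\Delta x^{(n)}})=o(1)$. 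Hence the maximal jump vanishes uniformly.

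The main obstacle is precisely this near-boundary behavior of the normalizer $r^{(n)}$: the denominator in $\delta Z^{(n)}_k$ may degenerate where the bid price is close to zero, so a naive bound based on $r^{(n)}\geq\eta$ fails. The non-degeneracy Assumption \eqref{pricenonconstant} together with the consistency condition \eqref{near0} is tailor-made to rescue both the predictable-variation and Lindeberg estimates, because in any boundary cell either $r^{(n)}$ or $p^{(n)}$ is bounded below and the scaling of Assumption \ref{scaling} yields an extra factor small enough to compensate. Once both conditions are checked, the martingale CLT delivers $Z^{(n)}\Rightarrow Z$ in $\mathcal D([0,T];\R)$ where $Z$ is a standard Brownian motion.
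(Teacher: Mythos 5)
Your overall strategy coincides with the paper's: treat $Z^{(n)}$ as a martingale difference array, show the predictable quadratic variation tends to $t$, verify a Lindeberg-type negligibility condition via a uniform deterministic bound on the increments, and invoke a functional martingale CLT. The gap is in the step you yourself identify as the crux, namely the lower bound on the normalizer $r^{(n)}$ where it is not bounded below by a constant. You justify $r^{(n)}(s)\geq\sqrt{\Delta x^{(n)}\eta'}$ there by invoking the boundary condition (\ref{near0}); but (\ref{near0}) is an identity that holds only at states $s=(0,v)$ with bid price exactly zero, whereas the degenerate regime is $\{s:\ r^{(n)}(s)<\eta/2\}$, which is singled out by (\ref{pricenonconstant}) and has nothing to do with $b$ being small. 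The missing ingredient is the pointwise inequality $\Delta x^{(n)}\left|p^{(n)}(s)\right|\leq \left(r^{(n)}(s)\right)^2$, valid at \emph{every} state (the paper's (\ref{inequality})): it follows from (\ref{r}) and (\ref{p}) because the difference of the two event probabilities in (\ref{p}) is dominated in absolute value by their sum, which (\ref{r}) identifies as $\Delta p^{(n)}\left(r^{(n)}(s)\right)^2$. Combined with (\ref{pricenonconstant}) this gives the dichotomy: either $r^{(n)}(s)>\eta/2$, or $\left(p^{(n)}(s)\right)^+>\eta/2$ and hence $\left(r^{(n)}(s)\right)^2\geq\Delta x^{(n)}\left(p^{(n)}(s)\right)^+>\Delta x^{(n)}\eta/2$. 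Without this inequality your claimed bound is unsupported, and both the Lindeberg estimate and the treatment of the correction term in the quadratic variation collapse. Once it is inserted, your computation in the degenerate regime is essentially the paper's, with your equality $\left(r^{(n)}\right)^2=\Delta x^{(n)}p^{(n)}$ replaced by the inequality $\left(r^{(n)}\right)^2\geq\Delta x^{(n)}\left|p^{(n)}\right|$.

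A smaller point: in the regime $r^{(n)}\geq\eta$ you bound each quadratic-variation correction summand by $O\left((\Delta t^{(n)})^2\right)$ and each increment by $O\left(\Delta x^{(n)}\right)$ ``provided $p^{(n)}$ is locally bounded on the trajectory''; boundedness of $p^{(n)}$ is not among the hypotheses of the theorem. The paper sidesteps this via the elementary observation (\ref{pbound}) that $\Delta t^{(n)}\left|p^{(n)}\left(S^{(n)}_{k-1}\right)\right|=\left|\delta\hat{B}^{(n)}_k\right|\leq\E\left(\left.\left|\delta B^{(n)}_k\right|\right|\F^{(n)}_{k-1}\right)\leq\Delta x^{(n)}$ almost surely, which is what you should use in place of a local boundedness hypothesis.
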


\begin{proof}
First note that equations (\ref{r}) and (\ref{p}) imply that for all $n\in\N$ and $k\leq T_n$,
\begin{equation}\label{inequality}
-1\leq \frac{\Delta x^{(n)}p^{(n)}\left(S_{k-1}^{(n)}\right)}{\left(r^{(n)}\left(S_{k-1}^{(n)}\right)\right)^2}\leq 1\quad\text{a.s.}
\end{equation}
Moreover, by definition
\begin{equation}\label{pbound}
\Delta t^{(n)}\left|p^{(n)}\left(S_{k-1}^{(n)}\right)\right|\leq \E\left(\left.\left|\delta B_k^{(n)}\right|\right|\F_{k-1}^{(n)}\right)\leq\Delta x^{(n)}\stackrel{n\ra\infty}{\longrightarrow}0\quad\text{a.s.}
\end{equation}
Hence, for any $t\in[0,T]$
\[\sum_{k=1}^{\lfloor t/\Delta t^{(n)}\rfloor}\E\left(\left.\left(\delta Z_k^{(n)}\right)^2\right|\F_{k-1}^{(n)}\right)
=\sum_{k=1}^{\lfloor t/\Delta t^{(n)}\rfloor}\frac{\Delta t^{(n)}\left(r^{(n)}\left(S_{k-1}^{(n)}\right)\right)^2-\left(\Delta t^{(n)}p^{(n)}\left(S_{k-1}^{(n)}\right)\right)^2}{\left(r^{(n)}\left(S_{k-1}^{(n)}\right)\right)^2}\ra t\quad\text{ a.s.}\]
Second, (\ref{pricenonconstant}) and (\ref{inequality}) imply that for all $n\in\N$ and $k\leq T_n$,
\begin{eqnarray*}
\left[r^{(n)}\left(S_{k-1}^{(n)}\right)\right]^{-2}&\leq&\left[r^{(n)}\left(S_{k-1}^{(n)}\right)\right]^{-2} \1_{\left\{r^{(n)}\left(S_{k-1}^{(n)}\right)>\frac{\eta}{2}\right\}}+\left[r^{(n)}\left(S_{k-1}^{(n)}\right)\right]^{-2}\1_{\left\{\left(p^{(n)}\left(S_{k-1}^{(n)}\right)\right)^+>\frac{\eta}{2}\right\}} \\
&\leq&\frac{4}{\eta^2}+\left[r^{(n)}\left(S_{k-1}^{(n)}\right)\right]^{-2}\1_{\left\{\left[r^{(n)}\left(S_{k-1}^{(n)}\right)\right]^2>\Delta x^{(n)}\frac{\eta}{2}\right\}}\\
&\leq&\frac{4}{\eta^2}+\frac{2}{\Delta x^{(n)}\eta}\quad\text{a.s.}
\end{eqnarray*}
Therefore, there exists a deterministic sequence $(c_n)$ converging to zero such that for all $k=1,\dots,T_n$,
\begin{equation}\label{Zbound}
\begin{split}
\left|\delta Z^{(n)}_k\right|^2&=\frac{\left[\Delta x^{(n)}\left(\1_B\left(\phi_k^{(n)}\right)-\1_A\left(\phi_k^{(n)}\right)\right)-\Delta t^{(n)} p^{(n)}\left(S_{k-1}^{(n)}\right)\right]^2}{\left[r^{(n)}\left(S_{k-1}^{(n)}\right)\right]^2} \\
&\leq2\left[\left(\Delta x^{(n)}\right)^2+\left(\Delta t^{(n)} p^{(n)}\left(S_{k-1}^{(n)}\right)\right)^2\right]\frac{2}{\eta}\left(\frac{2}{\eta}+\frac{1}{\Delta x^{(n)}}\right) \\
&\leq c_n\quad\text{ a.s.}
\end{split}
\end{equation}
We conclude that for all $\eps>0$,
\begin{eqnarray*}
\sum_{k=1}^{\lfloor t/\Delta t^{(n)}\rfloor}\E\left(\left|\delta Z^{(n)}_k\right|^2\1_{\left\{\left|\delta Z^{(n)}_k\right|>\eps\right\}}\right)&\leq& \frac{c_n}{\eps^2}\sum_{k=1}^{\lfloor t/\Delta t^{(n)}\rfloor}\E\left|\delta Z^{(n)}_k\right|^2\leq \frac{t}{\eps^2}\cdot c_n\ra0,
\end{eqnarray*}
i.e. the Lindeberg condition is satisfied. Therefore, the functional central limit theorem for martingale difference arrays (cf.~Theorem 18.2 in \cite{Billingsley}) implies that $Z^{(n)}$ converges weakly to a standard Brownian motion. 
\end{proof}

In order to obtain the convergence of the full price process in Section 5 below we also have to assume that the drift and volatility functions $p^{(n)}$ and $r^{(n)},\ n\in\N,$ satisfy a continuity condition and that they converge to some functions $p$ and $r$ as $n\ra\infty$. 

\begin{ass}\phantomsection\label{prob2}
\be[label={(\roman*)},ref={\thecor(\roman*)}]
\item\label{prob21}
There exist functions $p: E_{loc}\ra\R,\ r: E_{loc}\ra\R_+,$ and $C<\infty$ such that for all $s=(b,v),\tilde{s}=(\tilde{b},\tilde{v})\in E_{loc}$,
\begin{equation*}\label{lineargrowth}
\left|p\left(s\right)\right|+r\left(s\right)\leq C(1+|b|)
\end{equation*}
and for all $m\in\N$,
\begin{equation*}\label{pq}
\sup_{s=(b,v)\in E_{loc}}\left|p^{(n)}\left((b\wedge m,v)\right)-p\left((b\wedge m,v)\right)\right|+\left|r^{(n)}\left((b\wedge m,v)\right)-r\left((b\wedge m,v)\right)\right|\ra0.
\end{equation*}
\item\label{prob22}
There exists $L<\infty$ such that for all $n\in\N$ and $s=(b,v),\tilde{s}=(\tilde{b},\tilde{v})\in E_{loc}$,
\begin{equation*}\label{crazy}
\begin{split}
&\max\left\{\left|p^{(n)}(s)-p^{(n)}(\tilde{s})\right|,\left|r^{(n)}(s)-r^{(n)}(\tilde{s})\right|\right\}\\ 
&\qquad\qquad\leq L\left(1+|b|+|\tilde{b}|\right)\left(1+\left\Vert v\1_{[0,b\vee\tilde{b}]}\right\Vert_{L^2}+\left\Vert \tilde{v}\1_{[0,b\vee\tilde{b}]}\right\Vert_{L^2}\right)\left\{\left|b-\tilde{b}\right|+\left\Vert\left(v-\tilde{v}\right)\1_{[0,b\vee\tilde{b}]}\right\Vert_{L^2}\right\}.
\end{split}
\end{equation*}
\ee
\end{ass}

Assumption \ref{prob22} is similar to a local Lipschitz assumption. It will play a key role in the proof of the main theorem later on. The following example illustrates the assumed dependence structure.

\begin{ex}\label{ex0}
In order to model dependence on standing volumes we can integrate a Lipschitz continuous function $h:\R\ra\R$ against cumulated volumes standing to the left of the price process. If we suppose that $h$ has compact support in $\R_-$, then for all $s=(b,v),\tilde{s}=(\tilde{b},\tilde{v})\in E_{loc}$,
\begin{eqnarray*}
&&\left|\left\langle v(\cdot+b)\1_{[-b,0]},h\right\rangle-\left\langle \tilde{v}\left(\cdot+\tilde{b}\right)\1_{[-\tilde{b},0]},h\right\rangle\right|
=\left|\left\langle v,h(\cdot-b)\1_{[0,b]}\right\rangle-\left\langle \tilde{v},h\left(\cdot-\tilde{b}\right)\1_{[0,\tilde{b}]}\right\rangle\right|\\
&\leq& \left|\left\langle v-\tilde{v},h\left(\cdot-\tilde{b}\right)\1_{[0,\tilde{b}]}\right\rangle\right|+\left|\left\langle v\1_{[0,b\vee\tilde{b}]},h(\cdot-b)-h\left(\cdot-\tilde{b}\right)\right\rangle\right|\\
&\leq& \left\Vert h\right\Vert_{L^2}\cdot\left\Vert (v-\tilde{v})\1_{[0,b\vee \tilde{b}]}\right\Vert_{L^2}+\left\Vert v\1_{[0,b\vee\tilde{b}]}\right\Vert_{L^2} \cdot  L\left\Vert \1_{[0,b\vee\tilde{b}]}\left(b-\tilde{b}\right)\right\Vert_{L^2}\\
&\leq& \left\Vert h\right\Vert_{L^2}\cdot\left\Vert (v-\tilde{v})\1_{[0,b\vee \tilde{b}]}\right\Vert_{L^2}+ L\left\Vert v\1_{[0,b\vee\tilde{b}]}\right\Vert_{L^2}\left(1+|b|+|\tilde{b}|\right)\left|b-\tilde{b}\right|.
\end{eqnarray*}
Now if $P,R$ are Lipschitz continous functions, we may define for all $s=(b,v)\in E_{loc}$,
\[p^{(n)}\left(s\right):=P\left(\left\langle v(\cdot+b)\1_{[-b,0]},h\right\rangle\right),\qquad r^{(n)}\left(s\right):=R\left(\left\langle v(\cdot+b)\1_{[-b,0]},h\right\rangle\right)\]
and the so defined functions $p^{(n)}$ and $r^{(n)}$ satisfy Assumption \ref{prob22}. 
\end{ex}

\s{Fluctuations of the volume process}\label{volume}

In this section we analyze the fluctuation of the infinite dimensional volume process $V^{(n)}$. In a first step we compute its conditional moments and prove their convergence as $n \ra \infty$. Subsequently, we represent it as the solution to a stochastic differential equations driven by infinite dimensional martingale that converges in distribution to a cylindrical Brownian motion as $n \to \infty$. Since $V^{(n)}$ is not an $L^2$-valued process, but only $L^2_{loc}$-valued, we need to localize the analysis. 

We make the following assumption on the joint distribution of the random variables $\omega_k^{(n)}$ and $\pi_k^{(n)}$.

\begin{ass}\label{density}
There exists an $M>0$ such that for all $n\in\N$ and $k\leq T_n$,
\begin{equation}\label{wm}
\p\left(\omega_k^{(n)}\in[-M,M],\ \pi_k^{(n)}\in[0,\infty)\right)=1.
\end{equation}
For every $n\in\N$ there exist two measurable functions $g^{(n)},\ h^{(n)}: E_{loc}\times\R_+\ra\R_+$
such that for all $k=1,\dots,T_n$ and all $D\in\mathcal{B}(\R_+)$,
\[\E\left(\left.\left(\omega_k^{(n)}\right)^2\1_C\left(\phi_k^{(n)}\right)\1_D\left(\pi_k^{(n)}\right)\ \right|\ \F_{k-1}^{(n)}\right)=\int_Dg^{(n)}\left(S_{k-1}^{(n)};y\right)dy\quad\text{a.s.}\] 
and
\[\E\left(\left.\omega_k^{(n)}\1_C\left(\phi_k^{(n)}\right)\1_D\left(\pi_k^{(n)}\right)\ \right|\ \F_{k-1}^{(n)}\right)=\Delta v^{(n)}\int_Dh^{(n)}\left(S_{k-1}^{(n)};y\right)dy\quad\text{a.s.}\] 
\end{ass}

According to Assumptions \ref{prob1} and \ref{density} the process $\left(S_k^{(n)}\right)_{k=0,\dots,T_n}$ is a homogeneous Markov chain for each $n\in\N$. Furthermore, (\ref{wm}) and Assumption \ref{scaling} imply that for all $m>0$, $n\in\N$, and $k\leq T_n$,
\begin{eqnarray*}
\left\Vert \delta V^{(n)}_k\1_{[0,m]}\right\Vert^2_{L^2}&\leq& \left(\Delta v^{(n)}\right)^2M^2
\left\Vert\sum_{j=0}^{\lfloor\cdot/\Delta x^{(n)}\rfloor}\1_{I^{(n)}\left(\pi^{(n)}_k\right)}\left(x_j^{(n)}\right)\1_{[0,m]}\right\Vert_{L^2}^2 \\
&\leq& \Delta t^{(n)}M^2m\quad\text{a.s.}
\end{eqnarray*}
and therefore for all $m>0$ also
\begin{equation}\label{vbound}
\begin{split}
\left\Vert\delta\ol{v}^{(n)}_k\1_{[0,m]}\right\Vert_{L^2}^2&\leq \left\Vert\delta\hat{v}^{(n)}_k\1_{[0,m]}\right\Vert_{L^2}^2+
\left\Vert \delta V_k^{(n)}\1_{[0,m]}\right\Vert^2_{L^2}\\
&\leq \E\left(\left.\left\Vert \delta V_k^{(n)}\1_{[0,m]}\right\Vert^2_{L^2}\right|\F^{(n)}_{k-1}\right)+\left\Vert \delta V_k^{(n)}\1_{[0,m]}\right\Vert^2_{L^2} \\
&\leq 2M^2m\Delta t^{(n)}\quad\text{a.s.}
\end{split}
\end{equation}

The next two assumptions deal with the convergence and continuity of $g^{(n)}$ and $h^{(n)}$.

\begin{ass}\phantomsection\label{g}
\be[label={(\roman*)},ref={\thecor(\roman*)}]
\item\label{gi}
There exists a measurable function $g: E_{loc}\times\R_+\ra \R_+$ satisfying 
\begin{equation*}\label{qis}
\inf_{s\in E_{loc}}g(s;y)>0\quad\forall\ y\in\R_+ 
\end{equation*}
such that
\begin{equation*}\label{gconv}
\sup_{s\in E_{loc}}\int_0^\infty\left|g^{(n)}(s;y)-g(s;y)\right|dy\ra0.
\end{equation*}
\item\label{gii}There exists an $L<\infty$ such that for all $n\in\N$ and $s=(b,v),\tilde{s}=(\tilde{b},\tilde{v})\in E_{loc}$,
\begin{equation*}\label{g1}
\begin{split}
&\int_0^\infty\left|g^{(n)}(s;y)-g^{(n)}(\tilde{s};y)\right|dy\\
&\qquad\qquad\leq L\left(1+|b|+]\tilde{b}]\right)\left(1+\left\Vert v\1_{[0,b\vee\tilde{b}]}\right\Vert_{L^2}+\left\Vert \tilde{v}\1_{[0,b\vee\tilde{b}]}\right\Vert_{L^2}\right)\left\{\left|b-\tilde{b}\right|+\left\Vert\left(v-\tilde{v}\right)\1_{[0,b\vee\tilde{b}]}\right\Vert_{L^2}\right\}.
\end{split}
\end{equation*}
\ee
\end{ass}

The next assumption is key to the derivation of a diffusion limit for the $L^2_{loc}$-valued functions $V^{(n)}$. It states that order placements and cancellations are expected to be approximately of the same size and that the expected disbalance between both also scales in $n$. This guarantees that the cumulated volume process will not explode when passing to the scaling limit. 

\begin{ass}\phantomsection\label{h}
\be[label={(\roman*)},ref={\thecor(\roman*)}]
\item\label{hi}
There exists a measurable function $h:E_{loc}\times \R_+\ra\R$ satisfying 
\begin{equation*}\label{hintbar}
\sup_{s\in E_{loc}}\int_0^\infty \left|h(s;y)\right|^2dy<\infty
\end{equation*}
such that
\begin{equation*}\label{hconv}
\sup_{s\in E_{loc}}\int_0^\infty \left|h^{(n)}(s;y)-h(s;y)\right|^2dy\ra0.
\end{equation*} 
\item\label{hii}There exists an $L<\infty$ such that for all $n\in\N$ and $s=(b,v),\tilde{s}=\left(\tilde{b},\tilde{v}\right)\in E_{loc}$,
\begin{equation*}\label{h1}
\begin{split}
&\left(\int_0^\infty\left|h^{(n)}(s;y)-h^{(n)}(\tilde{s};y)\right|^2dy\right)^{1/2}\\
&\qquad\qquad\leq L\left(1+|b|+|\tilde{b}|\right)\left(1+\left\Vert v\1_{[0,b\vee\tilde{b}]}\right\Vert_{L^2}+\left\Vert \tilde{v}\1_{[0,b\vee\tilde{b}]}\right\Vert_{L^2}\right)\left\{\left|b-\tilde{b}\right|+\left\Vert\left(v-\tilde{v}\right)\1_{[0,b\vee\tilde{b}]}\right\Vert_{L^2}\right\}.
\end{split}
\end{equation*}
\ee
\end{ass}

\subsection{Basis functions}

Our goal is to represent the volume function as a stochastic differential equation driven by an infinite dimensional martingale whose increments are orthogonal across different basis functions of $L^2(\R_+;\R)$. We choose the Haar basis, i.e.~we specify the basis functions $(f_i)$ as follows: for each $k\in\N_0$ we set $g^k_{-1}(x)=\1_{[k,k+1)}(x)$. Moreover, we set for all $k,l\in\N_0$ ,
\[g^k_l(x):=\begin{cases}2^{l/2}&:\ x\in\left[k2^{-l},\left(k+\frac{1}{2}\right)2^{-l}\right)\\-2^{l/2}&:\ x\in\left[\left(k+\frac{1}{2}\right)2^{-l},(k+1)2^{-l}\right)\\0&:\ \text{else}\end{cases}.\]
To define the $(f_i)$ we now reorder the $(g^k_l)$ in a diagonal procedure:
\[f_1:=g^0_{-1},\ f_2:=g^1_{-1},\ f_3:=g^0_0,\ f_4:=g^2_{-1},\ f_5:=g^1_0,\ f_6:=g^0_1,\ \dots\]

In the following we denote by $k(i)\in\N_0$ and $l(i)\in\N_{-1}:=\N_0\cup\{-1\}$ the indeces such that $f_i\equiv g^{k(i)}_{l(i)}$.

Let us define for each $i\in\N$ the functions $F_i:\R_+\ra\R$ and $F_i^{(n)}:\R_+\ra\R,\ n\in\N,$ via
\[F_i(y):=\int_y^\infty f_i(x)dx,\qquad F_i^{(n)}(y):=\int_{\Delta x^{(n)}\left\lfloor y/\Delta x^{(n)}\right\rfloor}^\infty f_i(x)dx.\]
We shall see that the drift and the volatility of the volume processes can be expressed in terms of the functions $F_i$ and $F^{(n)}_i$. 
We notice that $|F_i(y)|\vee\left|F_i^{(n)}(y)\right|\leq 1$ for all $y\in\R_+$ and $i,n\in\N$. In addition, we will often use the fact that if $l(i)\geq0$, then $$supp(F_i)=\left[k(i)2^{-l(i)},(k(i)+1)2^{-l(i)}\right],$$ i.e.~$\left|supp(F_i)\right|\leq1$. Similarly, also $\left|supp\left(F^{(n)}_i\right)\right|\leq1$ for all $i,n\in\N$ with $l(i)\geq0$. We also notice that if $l(i)=-1$, then $supp(F_i)=supp\left(F_i^{(n)}\right)=[0,k(i)+1]$.
Moreover, we have the  $L^2_{loc}$-representation
\[\1_{[y,\infty)}(x)=\sum_iF_i(y)f_i(x),\qquad \1_{\left[\Delta x^{(n)}\lfloor y/\Delta x^{(n)}\rfloor,\infty\right)}(x)=\sum_iF_i^{(n)}(y)f_i(x).\]
Finally, for all $m\in\N$ we define the index set
\begin{equation}\label{im}
\mathcal{I}_m:=\{i\in\N:\ supp(f_i)\cap(0,m)\neq\emptyset\}.
\end{equation}
Note that for all $m\in\N$, $(f_i)_{i\in\mathcal{I}_m}$ is a basis of $L^2([0,m])$.
Furthermore, for all $n,m\in\N$ and $y\in\R_+$,
\[\sum_{i\in\mathcal{I}_m}\left[F^{(n)}_i(y)\right]^2\leq m\qquad \text{and}\qquad\sum_{i\in\mathcal{I}_m}\left[F_i(y)\right]^2\leq m.\]

We shall repeatedly use the following technical lemma. It allows us to approximate the conditional moments of volume increments using finitely many basis functions after localisation. 

\begin{lem}\label{eps}
For each $\eps>0$ and $m\in\N$ there exists a finite subset $J\subset\mathcal{I}_m$ such that for all $y\in\R_+$,
\[\sum_{i\in\mathcal{I}_m\backslash J}\left(F_i(y)\right)^2\leq\eps\qquad\text{and}\qquad\sum_{i\in\mathcal{I}_m\backslash J}\left(F^{(n)}_i(y)\right)^2\leq\eps\quad \forall\ n\in\N.\]
\end{lem}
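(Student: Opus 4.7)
The plan is to choose $J$ to consist of all indices $i\in\mathcal{I}_m$ with $l(i)$ below some finite level $L=L(\eps)$, and then control the tail by exploiting two facts about the Haar system: the $L^\infty$ bound $|F_i(y)|\le 2^{-l(i)/2}/2$ for $l(i)\ge 0$, and the fact that for a given point $y$ at most one basis function at each resolution level is non-constant.

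First I would do a direct computation with the primitives. For $i$ with $l(i)\ge 0$, since $\int f_i=0$, $F_i$ vanishes outside the support $[k(i)2^{-l(i)},(k(i)+1)2^{-l(i)}]$ and a short computation gives that $|F_i|$ attains its maximum at the midpoint of this support with value $2^{-l(i)/2}/2$. The corresponding discrete primitives satisfy $F_i^{(n)}(y)=F_i\bigl(\Delta x^{(n)}\lfloor y/\Delta x^{(n)}\rfloor\bigr)$, so the same pointwise bound applies.

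Next I would observe that for fixed $y\in\R_+$ and any $l\ge 0$, there is (up to finitely many boundary points which do not affect the bound) at most one $k$ with $y\in[k2^{-l},(k+1)2^{-l})$. Consequently, summing over all $i\in\mathcal{I}_m$ with $l(i)=l\ge 0$, at most one term contributes to $\sum_i F_i(y)^2$ at that level, giving $\sum_{i:\,l(i)=l}F_i(y)^2\le 2^{-l}/4$. The same statement holds for $F_i^{(n)}(y)^2$ by the identity above, since $\Delta x^{(n)}\lfloor y/\Delta x^{(n)}\rfloor$ is again a single point of $\R_+$.

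Now fix $L\in\N$ with $\sum_{l>L}2^{-l}/4=2^{-L}/2\le\eps$ and set
\[
J:=\{i\in\mathcal{I}_m:\,l(i)\le L\}.
\]
This set is finite: for $l(i)=-1$ we have $f_i=\1_{[k,k+1)}$ intersecting $(0,m)$ only for $k\in\{0,1,\dots,\lceil m\rceil\}$, and for each $0\le l\le L$ we have $k(i)2^{-l}<m$, so $k(i)\in\{0,1,\dots,\lceil m 2^L\rceil\}$. For any $y\in\R_+$,
\[
\sum_{i\in\mathcal{I}_m\setminus J}F_i(y)^2=\sum_{l>L}\sum_{i:\,l(i)=l}F_i(y)^2\le\sum_{l>L}\frac{2^{-l}}{4}\le\eps,
\]
and exactly the same estimate holds for $\sum_{i\in\mathcal{I}_m\setminus J}F_i^{(n)}(y)^2$ uniformly in $n$ via the identification $F_i^{(n)}(y)=F_i(\Delta x^{(n)}\lfloor y/\Delta x^{(n)}\rfloor)$. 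The only mildly delicate step is the pointwise orthogonality-of-supports observation at each level, but this is immediate from the definition of the Haar functions, so I expect no real obstacle.
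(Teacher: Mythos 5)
Your proof is correct and follows essentially the same route as the paper: truncate at a finite resolution level $L$ chosen so that $\sum_{l>L}2^{-l}\le\eps$, use the pointwise bound $|F_i(y)|\le 2^{-l(i)/2}$ together with the fact that at each level at most one $F_i$ is nonzero at a given $y$, and transfer the bound to $F_i^{(n)}$ via $F_i^{(n)}(y)=F_i\bigl(\Delta x^{(n)}\lfloor y/\Delta x^{(n)}\rfloor\bigr)$. The only (immaterial) differences are your sharper constant $2^{-l(i)/2}/2$ and the explicit verification that $J$ is finite.
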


\begin{proof}
For fixed $\eps>0$ and $m\in\N$ set $l_0:=\min\left\{l\in\N:\ 2^{-l}\leq\eps\right\}$ and $J:=\left\{i\in\mathcal{I}_m:\ l(i)\leq l_0\right\}$. Now note that for all $i\in\N$,
\[|F_i(y)|=\left|\int_y^\infty f_i(x)dx\right|\leq 2^{-l(i)/2}\quad\forall\ y\in\R_+.\]
Furthermore for every $l\in\N$ and $y\in\R_+$ there exists exactly one $i\in\N$ with $l(i)=l$ such that $F_i(y)\neq0$. Therefore,
\[\sum_{i\in\mathcal{I}_m\backslash J}\left(F_i(y)\right)^2\leq \sum_{l>l_0}2^{-l(i)}=2^{-l_0}\leq\eps\quad\forall\ y\in\R_+.\]
Since this is true for all $y\in\R_+$, it is also true for all $\Delta x^{(n)}\lfloor y/\Delta x^{(n)}\rfloor$ with $n\in\N$ and $y\in\R_+$. Hence,
\[\sum_{i\in\mathcal{I}_m\backslash J}\left(F_i^{(n)}(y)\right)^2\leq\eps\quad\forall\ y\in\R_+,\ n\in\N.\]
\end{proof}

\subsection{Convergence of drift, volatility and correlation functions}\label{musigma}
We are now going to analyse the convergence of the conditional expectations and variances of the volume increments. It will turn out that in the limit they can be described in terms of the functions $\mu_i:E_{loc}\ra\R$ and $\sigma_i:E_{loc}\ra\R_+$  $(i \in \N)$ defined by:
\[\mu_i(s):=\int_0^\infty h(s;y)F_i(y)dy,\qquad\left(\sigma_i(s)\right)^2:=\int_0^\infty g(s;y)\left[F_i(y)\right]^2dy.\]

\begin{lem}\label{sigpos} 
Given Assumption \ref{gi} we have for all $i\in\N$, $\inf_{s\in E_{loc}}\sigma_i(s)>0$. 
\end{lem}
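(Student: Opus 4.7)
I would argue by contradiction using Fatou's lemma and the pointwise strict positivity supplied by Assumption \ref{gi}. Suppose $\inf_{s \in E_{loc}} \sigma_i(s) = 0$ and choose a sequence $(s_n) \subset E_{loc}$ with
\[
\sigma_i(s_n)^2 = \int_0^\infty g(s_n;y)\,F_i(y)^2\,dy \longrightarrow 0.
\]
The integrands are non-negative, so Fatou's lemma yields
\[
\int_0^\infty F_i(y)^2 \liminf_{n\to\infty} g(s_n;y)\,dy \;\leq\; \liminf_n \sigma_i(s_n)^2 \;=\; 0,
\]
which forces $F_i(y)^2 \liminf_n g(s_n;y) = 0$ for Lebesgue-a.e.\ $y \in \R_+$. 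By Assumption \ref{gi}, one has $\liminf_n g(s_n;y) \geq \inf_{s \in E_{loc}} g(s;y) > 0$ for \emph{every} single $y$, so the identity above reduces to $F_i(y)^2 = 0$ for a.e.\ $y$.

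The final step is to derive a contradiction from this last statement. Since $f_i$ is a Haar basis element, $F_i(y) = \int_y^\infty f_i(x)\,dx$ is continuous, piecewise linear, compactly supported and clearly non-trivial (explicit formulas are available: $F_i$ equals $1$ to the left of $k(i)$ when $l(i) = -1$, and attains a non-zero extremum at the midpoint of $[k(i)2^{-l(i)},(k(i)+1)2^{-l(i)}]$ when $l(i) \geq 0$). In either case the open set $\{y : F_i(y)^2 > 0\}$ has strictly positive Lebesgue measure, contradicting $F_i^2 = 0$ a.e.\ and proving that $\inf_{s \in E_{loc}} \sigma_i(s) > 0$.

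I do not expect any substantive obstacle here. The one technical point worth flagging is that the argument deliberately avoids having to know that the pointwise infimum $y \mapsto \inf_{s \in E_{loc}} g(s;y)$ is Borel measurable over the uncountable parameter set $E_{loc}$; Fatou is applied only to the countable sequence $(g(s_n;\cdot))_{n \in \N}$, whose terms are measurable by the hypothesis on $g$ in Assumption \ref{g}.
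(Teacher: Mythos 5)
Your proof is correct and follows essentially the same route as the paper's (two-line) argument: it combines the fact that $F_i$ is nonzero on an interval of positive measure with the pointwise lower bound $\inf_{s\in E_{loc}}g(s;y)>0$ from Assumption \ref{gi}. You simply make the combination rigorous via a contradiction and Fatou's lemma along a minimizing sequence, which neatly sidesteps the measurability of $y\mapsto\inf_{s}g(s;y)$ that the paper's terse proof glosses over.
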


\begin{proof}
By definition $F_i(y)\neq0$ for all $y\in\left(k(i)2^{-l(i)},(k(i)+1)2^{-l(i)}\right)$. Thus, the claim follows from the fact that $g(\cdot;y)$ is bounded away from zero for each $y\in\R_+$ according to Assumption \ref{gi}.
\end{proof}

In view of the preceding lemma we can define for all $i,j\in\N$ the function $\rho_{ij}:E_{loc}\ra[-1,1]$ via
\[\sigma_i(s)\sigma_j(s)\rho_{ij}(s):=\int_0^\infty g(s;y)F_i(y)F_j(y)dy.\]

Moreover, we define for each $n,i,j\in\N$ the following functions from $E_{loc}$ to $\R$,
\begin{eqnarray*}
\mu^{(n)}_i(s)&:=&\int_0^\infty h^{(n)}(s;y)F^{(n)}_i(y)dy,\\
\sigma^{(n)}_i(s)&:=&\left(\int_0^\infty g^{(n)}(s;y)\left[F^{(n)}_i(y)\right]^2dy
-\Delta t^{(n)}\left(\mu_i^{(n)}(s)\right)^2\right)^{1/2},\\
\rho^{(n)}_{ij}(s)&:=&\frac{\1_{(0,\infty)}\left(\sigma_i^{(n)}(s)\sigma_j^{(n)}(s)\right)}{\sigma_i^{(n)}(s)\sigma^{(n)}_j(s)}
\left(\int_0^\infty g^{(n)}(s;y)F^{(n)}_i(y)F^{(n)}_j(y)dy
-\Delta t^{(n)}\mu_i^{(n)}(s)\mu^{(n)}_j(s)\right),
\end{eqnarray*}
and the $L^2_{loc}(\R_+)$-valued functions
\[\mu^{(n)}(s;\cdot):=\sum_i\mu_i^{(n)}(s)f_i(\cdot)\qquad\text{and}\qquad\mu(s;\cdot):=\sum_i\mu_i(s)f_i(\cdot).\]

Note that with this notation we have for all $x\in\R_+$ and $n\in\N$, making use of Assumption \ref{scaling},
\begin{equation}\label{1moment}
\begin{split}
\delta\hat{v}_k^{(n)}(x)&=\Delta v^{(n)}\E\left(\left.\Delta x^{(n)}\sum_{j=0}^{\lfloor x/ \Delta x^{(n)}\rfloor}M_k^{(n)}\left(x_j^{(n)}\right)\right|\F^{(n)}_{k-1}\right)\\
&=\Delta v^{(n)}\int_0^\infty \Delta v^{(n)}h^{(n)}\left(S_{k-1}^{(n)};y\right) \sum_{j=0}^{\lfloor x/\Delta x^{(n)}\rfloor}\1_{I^{(n)}(y)}\left(x_j^{(n)}\right)dy\\
&=\Delta t^{(n)}\int_0^\infty h^{(n)}\left(S_{k-1}^{(n)};y\right)  \1_{\left[\Delta x^{(n)}\lfloor y/\Delta x^{(n)}\rfloor,\infty\right)}(x) dy=\Delta t^{(n)}\mu^{(n)}\left(S_{k-1}^{(n)};x\right)
\end{split}
\end{equation}
as well as
\begin{equation*}
\begin{split}
\E\left(\left.\left\langle\delta V_k^{(n)},f_i\right\rangle^2\right| \F_{k-1}^{(n)}\right) &=\Delta t^{(n)}\E\left(\left.\1_C\left(\phi_k^{(n)}\right)\left[\omega_k^{(n)}\int_{\R_+} f_i(x)\sum_{j=0}^{\lfloor x/\Delta x^{(n)}\rfloor}\1_{I^{(n)}\left(\pi_k^{(n)}\right)}\left(x_j^{(n)}\right)dx\right]^2\right| \F_{k-1}^{(n)}\right)\\
&=\Delta t^{(n)}\E\left(\left.\1_C\left(\phi_k^{(n)}\right)\left(\omega_k^{(n)}\right)^2\left[F_i^{(n)}\left(\pi_k^{(n)}\right)\right]^2\right| \F_{k-1}^{(n)}\right) \\
&=\Delta t^{(n)}\int_0^\infty g^{(n)}\left(S_{k-1}^{(n)};y\right)\left[F_i^{(n)}(y)\right]^2dy\qquad\qquad\qquad\qquad\qquad\qquad\qquad\qquad\ \\
&=\Delta t^{(n)}\left[\left(\sigma_i^{(n)}\left(S_{k-1}^{(n)}\right)\right)^2+\Delta t^{(n)}\left(\mu_i^{(n)}\left(S_{k-1}^{(n)}\right)\right)^2\right],\qquad\qquad\qquad\qquad\qquad\qquad\qquad\qquad\qquad\qquad\ \ \ 
\end{split}
\end{equation*}
i.e.
\begin{equation}\label{2moment}
\E\left(\left.\left\langle\delta \ol{v}_k^{(n)},f_i\right\rangle^2\right| \F_{k-1}^{(n)}\right)=\Delta t^{(n)}\left(\sigma_i^{(n)}\left(S_{k-1}^{(n)}\right)\right)^2.
\end{equation}

Similar calculations show that
\begin{equation}\label{cc}
\rho^{(n)}_{ij}\left(S_{k-1}^{(n)}\right)=\frac{\E\left(\left.\left\langle \delta\ol{v}^{(n)}_k,f_i\right\rangle\left\langle \delta\ol{v}^{(n)}_k,f_j\right\rangle\right|\F_{k-1}^{(n)}\right)}{\sigma_i^{(n)}\left(S_{k-1}^{(n)}\right)\sigma^{(n)}_j\left(S_{k-1}^{(n)}\right)}\1_{(0,\infty)}\left(\sigma_i^{(n)}\left(S_{k-1}^{(n)}\right)\sigma_j^{(n)}\left(S_{k-1}^{(n)}\right)\right).
\end{equation}

The next three lemmata establish the convergence of the drift, the volatility and the covariance functions introduced above. 

\begin{lem}\label{mu} 
	Given Assumption \ref{hi} we have for all $m\in\N$,
\[\sup_{s\in E_{loc}}\left\Vert \mu(s)\1_{[0,m]}\right\Vert_{L^2}<\infty \qquad\text{and}\qquad\sup_{s\in E_{loc}}\left\Vert\left(\mu^{(n)}(s)-\mu(s)\right)\1_{[0,m]}\right\Vert_{L^2}\ra0.\]
\end{lem}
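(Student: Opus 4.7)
The plan is to show that, on any window $[0,m]$, the $L^2_{loc}$-function $\mu(s;\cdot)$ coincides with the primitive $H(s;x):=\int_0^x h(s;y)\,dy$, and that $\mu^{(n)}(s;\cdot)$ coincides with a suitably discretised primitive $H^{(n)}(s;x):=\int_0^{x_n^+}h^{(n)}(s;y)\,dy$, where $x_n^+:=\Delta x^{(n)}(\lfloor x/\Delta x^{(n)}\rfloor+1)$. Both conclusions of the lemma then reduce to elementary Cauchy--Schwarz bounds combined with Assumption \ref{hi}.

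\textbf{Identification step.} A Fubini swap (justified since each $f_i$ has compact support and $h(s;\cdot)\in L^2$) gives
\[\mu_i(s)=\int_0^\infty h(s;y)\int_y^\infty f_i(x)\,dx\,dy=\langle H(s;\cdot),f_i\rangle_{L^2},\]
and analogously $\mu_i^{(n)}(s)=\langle H^{(n)}(s;\cdot),f_i\rangle_{L^2}$, using $F_i^{(n)}(y)=F_i(\Delta x^{(n)}\lfloor y/\Delta x^{(n)}\rfloor)$. Assuming $m\in\N$ without loss of generality (by monotonicity), a short inspection of the dyadic supports shows that $(f_i)_{i\in\mathcal{I}_m}$ is an orthonormal basis of $L^2([0,m])$ consisting of functions actually supported in $[0,m]$. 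Parseval applied to $H(s;\cdot)\1_{[0,m]}$ (which lies in $L^2([0,m])$ by Cauchy--Schwarz) then identifies $\mu(s;\cdot)=H(s;\cdot)$ a.e.\ on $[0,m]$, and analogously $\mu^{(n)}(s;\cdot)=H^{(n)}(s;\cdot)$.

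\textbf{Bounds.} By Cauchy--Schwarz, $H(s;x)^2\leq x\|h(s;\cdot)\|_{L^2}^2$, so
\[\|\mu(s)\1_{[0,m]}\|_{L^2}^2\leq\frac{m^2}{2}\sup_{s\in E_{loc}}\|h(s;\cdot)\|_{L^2}^2,\]
and the first claim follows from the uniform $L^2$-bound in Assumption \ref{hi}. For the second claim I would decompose
\[H^{(n)}(s;x)-H(s;x)=\int_0^{x_n^+}\left[h^{(n)}(s;y)-h(s;y)\right]\,dy+\int_x^{x_n^+}h(s;y)\,dy,\]
bound the $L^2([0,m])$-norm of the first term by $\sqrt{m(m+\Delta x^{(n)})}\cdot\sup_{s\in E_{loc}}\|h^{(n)}(s;\cdot)-h(s;\cdot)\|_{L^2}$ via Cauchy--Schwarz, and the $L^2([0,m])$-norm of the second by $\Delta x^{(n)}\cdot\sup_{s\in E_{loc}}\|h(s;\cdot)\|_{L^2}$ via a short Fubini and Cauchy--Schwarz argument (using that, for each $y$, the set $\{x\in[0,m]:y\in[x,x_n^+]\}$ has length at most $\Delta x^{(n)}$). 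Both expressions tend to zero uniformly in $s$ thanks to Assumption \ref{hi}.

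\textbf{Main subtlety.} The only step that needs care is the identification above: one must verify that the Haar functions $(f_i)_{i\in\mathcal{I}_m}$ are supported in $[0,m]$ and form an orthonormal basis of $L^2([0,m])$, so that the $L^2_{loc}$-sum defining $\mu(s;x)$ can literally be identified with the primitive $H(s;x)$ almost everywhere on $[0,m]$. Once this identification is in place, everything else is a routine Cauchy--Schwarz exercise.
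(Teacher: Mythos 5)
Your proof is correct, but it is organized quite differently from the paper's. The paper argues coefficient-wise in the Haar expansion: it bounds $\sum_{i\in\mathcal{I}_m}\bigl(\int h F_i\bigr)^2$ by Cauchy--Schwarz together with $|\mathrm{supp}(F_i)|\leq m$ and $\sum_{i\in\mathcal{I}_m}F_i(y)^2\leq m$, and for the convergence splits $\mu_i^{(n)}-\mu_i=\int (h^{(n)}-h)F_i^{(n)}+\int h\,(F_i^{(n)}-F_i)$, using $\sum_i\bigl(F_i^{(n)}(y)-F_i(y)\bigr)^2=\bigl\Vert\1_{[\Delta x^{(n)}\lfloor y/\Delta x^{(n)}\rfloor,\,y]}\bigr\Vert_{L^2}^2\leq\Delta x^{(n)}$. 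You instead first identify $\mu(s;\cdot)$ and $\mu^{(n)}(s;\cdot)$ with the primitives $H(s;\cdot)$ and $H^{(n)}(s;\cdot)$ via Fubini and Parseval, and then carry out what is in substance the same two-term decomposition at the level of the primitives: your term $\int_x^{x_n^+}h(s;y)\,dy$ is exactly the paper's $\int h\,(F_i^{(n)}-F_i)$ term viewed through the Parseval isometry, and your Cauchy--Schwarz bounds deliver the same rates (up to the inessential constant $m^2/2$ versus $m^2$). The identification step you flag as the main subtlety is sound: for integer $m$ every $f_i$ with $i\in\mathcal{I}_m$ is indeed supported in $[0,m]$ and $(f_i)_{i\in\mathcal{I}_m}$ is an orthonormal basis of $L^2([0,m])$, as the paper's setup records, and your computation $\{y:\Delta x^{(n)}\lfloor y/\Delta x^{(n)}\rfloor\leq x\}=[0,x_n^+)$ correctly turns $F_i^{(n)}$ into the discretised primitive. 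What your route buys is the explicit and rather illuminating formula $\mu(s;x)=\int_0^x h(s;y)\,dy$ on $[0,m]$, which the paper never states; the cost is the extra justification of the Fubini/Parseval identification. Both proofs ultimately rest on the same two inputs from Assumption \ref{hi}, namely the uniform $L^2$ bound on $h$ and the uniform $L^2$ convergence $h^{(n)}\to h$.
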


\begin{proof}
Since $supp(F_i)\subset[0,m]$ for all $i\in\mathcal{I}_m$,
\begin{eqnarray*}
\sup_{s\in E_{loc}}\left\Vert \mu(s)\1_{[0,m]}\right\Vert_{L^2}^2&=&
\sup_{s\in E_{loc}}\sum_{i\in\mathcal{I}_m}\left(\int_0^\infty h(s;y)F_i(y)dy\right)^2\\
&\leq&\sup_{s\in E_{loc}}m\sum_{i\in\mathcal{I}_m}\int_0^m \left(h(s;y)\right)^2\left(F_i(y)\right)^2dy \\
&\leq& m^2\cdot\sup_{s\in E_{loc}}\int_0^\infty \left(h(s;y)\right)^2dy<\infty.
\end{eqnarray*}
By a similar reasoning we can estimate for all $m\in\N$,
\begin{eqnarray*}
& & \sup_{s\in E_{loc}}\sum_{i\in\mathcal{I}_m}\left(\int_0^\infty h(s;y)\left(F_i^{(n)}(y)-F_i(y)\right)dy\right)^2 \\
&\leq& \sup_{s\in E_{loc}}m\sum_{i\in\mathcal{I}_m}\int_0^\infty \left(h(s;y)\right)^2\left(F_i^{(n)}(y)-F_i(y)\right)^2dy\\
&=&\ m\cdot \sup_{s\in E_{loc}}\int_0^\infty \left(h(s;y)\right)^2\left\Vert\1_{\left[\Delta x^{(n)}\lfloor y/\Delta x^{(n)}\rfloor ,y\right]}(\cdot)\right\Vert^2_{L^2}dy \\ &\leq& m\Delta x^{(n)}\sup_{s\in E_{loc}}\int_0^\infty \left(h(s;y)\right)^2dy \ra 0
\end{eqnarray*}
and by Assumption \ref{hi} also
\begin{eqnarray*}
&&\sup_{s\in E_{loc}}\sum_{i\in\mathcal{I}_m}\left(\int_0^\infty\left(h^{(n)}(s;y)-h(s;y)\right)F^{(n)}_i(y)dy\right)^2\\
&\leq& \sup_{s\in E_{loc}}m\sum_{i\in\mathcal{I}_m}\int_0^\infty\left(h^{(n)}(s;y)-h(s;y)\right)^2\left(F^{(n)}_i(y)\right)^2dy\\
&\leq&m^2\sup_{s\in E_{loc}}\int_0^\infty\left(h^{(n)}(s;y)-h(s;y)\right)^2dy\ra0.
\end{eqnarray*}
\end{proof}

\begin{lem}\label{sigma2}
Given Assumptions \ref{density}, \ref{gi}, and \ref{hi} we have for all $m\in\N$,  
\[\sup_{s\in E_{loc}}\sum_{i\in\mathcal{I}_m}\left(\sigma^{(n)}_i(s)\right)^2\leq mM^2\qquad\text{and}\qquad\sup_{s\in E_{loc}}\sum_{i\in\mathcal{I}_m}\left|\sigma_i^{(n)}(s)-\sigma_i(s)\right|^2\ra0.\]
\end{lem}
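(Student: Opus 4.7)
I would treat the two statements separately. The uniform bound follows by dropping the nonnegative correction $\Delta t^{(n)}(\mu_i^{(n)}(s))^2$ and then swapping sum and integral,
\[
\sum_{i\in\mathcal{I}_m}(\sigma_i^{(n)}(s))^2\leq \int_0^\infty g^{(n)}(s;y)\sum_{i\in\mathcal{I}_m}[F_i^{(n)}(y)]^2\,dy.
\]
The pointwise bound $\sum_{i\in\mathcal{I}_m}[F_i^{(n)}(y)]^2\leq m$ from the basis-function paragraph, together with the estimate $\int_0^\infty g^{(n)}(s;y)\,dy\leq M^2$ -- which is inherited from $|\omega_k^{(n)}|\leq M$ and the conditional expectation identity in Assumption \ref{density} -- then yields the constant $mM^2$.

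\textbf{Reduction of the convergence.} For the second assertion I use the elementary inequality $|a-b|^2\leq |a^2-b^2|$ valid for $a,b\geq 0$, which reduces matters to uniform convergence of $\sum_{i\in\mathcal{I}_m}|(\sigma_i^{(n)}(s))^2-(\sigma_i(s))^2|$ to zero. Writing
\[
(\sigma_i^{(n)})^2-(\sigma_i)^2=\int(g^{(n)}-g)[F_i^{(n)}]^2\,dy+\int g\bigl([F_i^{(n)}]^2-[F_i]^2\bigr)dy-\Delta t^{(n)}(\mu_i^{(n)})^2,
\]
and summing in $i$ before swapping with the $y$-integral, the first summand is dominated by $m\sup_s\int|g^{(n)}-g|\,dy$, which tends to zero by Assumption \ref{gi}. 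The correction $\Delta t^{(n)}\sum_i(\mu_i^{(n)}(s))^2=\Delta t^{(n)}\Vert\mu^{(n)}(s)\1_{[0,m]}\Vert^2_{L^2}$, where the equality uses that $(f_i)_{i\in\mathcal{I}_m}$ is an orthonormal basis of $L^2([0,m])$, is $o(1)$ uniformly in $s$ because Lemma \ref{mu} provides a uniform bound on $\Vert\mu^{(n)}(s)\1_{[0,m]}\Vert_{L^2}$.

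\textbf{Main obstacle.} The crux is the middle summand, for which I would need $\sum_{i\in\mathcal{I}_m}|[F_i^{(n)}(y)]^2-[F_i(y)]^2|\to 0$ uniformly in $y\geq 0$. Factoring the difference as $(F_i^{(n)}-F_i)(F_i^{(n)}+F_i)$ and applying Cauchy-Schwarz in $i$, I exploit the key identity
\[
F_i^{(n)}(y)-F_i(y)=\bigl\langle\1_{[\Delta x^{(n)}\lfloor y/\Delta x^{(n)}\rfloor,\, y)},\,f_i\bigr\rangle_{L^2}.
\]
Bessel's inequality for the orthonormal basis $(f_i)_{i\in\mathcal{I}_m}$ of $L^2([0,m])$ then yields $\sum_{i\in\mathcal{I}_m}(F_i^{(n)}(y)-F_i(y))^2\leq\Delta x^{(n)}$ uniformly in $y$; since $\sum_i(F_i^{(n)}+F_i)^2\leq 4m$, Cauchy-Schwarz produces $\sum_i|[F_i^{(n)}]^2-[F_i]^2|\leq 2\sqrt{m\Delta x^{(n)}}$. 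Integrating against $g$ (which satisfies $\int g\,dy\leq M^2$ upon passing to the limit in $g^{(n)}$) gives an $O(\sqrt{\Delta x^{(n)}})$ bound that vanishes. Recognizing $F_i^{(n)}-F_i$ as the Haar coefficient of the indicator of a subgrid interval of length at most $\Delta x^{(n)}$ -- which is what unlocks the Bessel estimate -- is the only non-routine step; the rest reduces to dominated convergence.
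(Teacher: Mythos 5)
Your proof is correct, and most of it coincides with the paper's: the bound $mM^2$ obtained by dropping the nonnegative correction $\Delta t^{(n)}(\mu_i^{(n)})^2$ and using $\sum_{i\in\mathcal{I}_m}[F_i^{(n)}(y)]^2\le m$ together with $\int_0^\infty g^{(n)}(s;y)\,dy\le M^2$; the reduction via $|a-b|^2\le|a^2-b^2|$ for $a,b\ge0$; the estimate of the $(g^{(n)}-g)$ term by $m\sup_s\int_0^\infty|g^{(n)}-g|\,dy$; and the disposal of $\Delta t^{(n)}\sum_i(\mu_i^{(n)})^2$ via Lemma \ref{mu} are all exactly the paper's steps (the paper puts the superscript $(n)$ on the other factor in the cross-term decomposition, which is immaterial). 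Where you genuinely diverge is the term $\sum_{i\in\mathcal{I}_m}\left|\int g\,\bigl([F_i^{(n)}]^2-[F_i]^2\bigr)dy\right|$: the paper handles it qualitatively, using Lemma \ref{eps} to reduce to a finite index set $J$ on which the crude pointwise bound $\bigl|[F_i^{(n)}(y)]^2-[F_i(y)]^2\bigr|\le 2(\Delta x^{(n)})^{1/2}$ suffices, with the tail over $\mathcal{I}_m\setminus J$ controlled by $\sum_{i\notin J}[F_i^{(n)}]^2+[F_i]^2$. You instead recognize $F_i^{(n)}(y)-F_i(y)$ as the Haar coefficient of $\1_{[\Delta x^{(n)}\lfloor y/\Delta x^{(n)}\rfloor,\,y)}$, apply Bessel's inequality to get $\sum_{i\in\mathcal{I}_m}(F_i^{(n)}(y)-F_i(y))^2\le\Delta x^{(n)}$ uniformly in $y$, and conclude by Cauchy--Schwarz with the explicit rate $O(\sqrt{m\,\Delta x^{(n)}})$. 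Your route is slicker and quantitative where the paper's is not; the paper's $\eps$--$J$ truncation has the advantage of being the same device it reuses in Lemmata \ref{sumc}, \ref{compact2}, and \ref{compact1}. Both versions rely, as the paper does implicitly, on $\int_0^\infty g^{(n)}(s;y)\,dy\le M^2$ (from $|\omega_k^{(n)}|\le M$ in Assumption \ref{density}) and its consequence $\int_0^\infty g(s;y)\,dy\le M^2$ in the limit, so there is no gap.
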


\begin{proof}
First, it follows from Assumption \ref{density} and equation (\ref{2moment}) that for all $m\in\N$ and $s\in E_{loc}$,
\[\sum_{i\in\mathcal{I}_m}\left(\sigma_i(s)\right)^2\leq\sum_{i\in\mathcal{I}_m}\int_0^\infty g(s;y)\left[F_i(y)\right]^2dy\leq mM^2.\]
Second, by Assumption \ref{gi} for all $m\in\N$,
\[\sup_{s\in E_{loc}}\sum_{i\in\mathcal{I}_m}\left|\int_0^\infty\left(g^{(n)}(s;y)-g(s;y)\right)\left[F_i(y)\right]^2dy\right|\leq m\cdot\sup_{s\in E_{loc}}\int_0^\infty\left|g^{(n)}(s;y)-g(s;y)\right|dy\ra 0\]
and it follows from Lemma \ref{mu} that for all $m\in\N$,
\[\Delta t^{(n)}\sup_{s\in E}\sum_{i\in\mathcal{I}_m}\left(\mu_i^{(n)}(s)\right)^2\ra0.\]
Next fix $m\in\N$ and let $\eps>0$. By Lemma \ref{eps} we find a finite subset $J\subset\mathcal{I}_m$ such that for all $n\in\N$ and $y\in\R_+$,
\[\sum_{i\in\mathcal{I}_m\backslash J}\left[F_i^{(n)}(y)\right]^2\leq\frac{\eps}{4M^2}\qquad\text{and}\qquad\sum_{i\in\mathcal{I}_m\backslash J}\left[F_i(y)\right]^2\leq\frac{\eps}{4M^2}.\]
Now we choose $n_0=n_0(\eps,m)$ such that for all $i\in\N$, $y\in\R_+$, and $n\geq n_0$,
\[\left|\left[F_i^{(n)}(y)\right]^2-[F_i(y)]^2\right|\leq 2\left|F_i^{(n)}(y)-F_i(y)\right|
\leq 2\left\Vert \1_{\left[\Delta x^{(n)}\lfloor y/\Delta x^{(n)}\rfloor,y\right]}\right\Vert_{L^2} \leq 2\left(\Delta x^{(n)}\right)^{1/2}<\frac{\eps}{2M^2|J|}.\]
We deduce that for all $n\geq n_0$ and $s\in E_{loc}$,
\begin{eqnarray*}
&&\sum_{i\in\mathcal{I}_m}\left|\int_0^\infty g^{(n)}(s;y)\left(\left[F_i^{(n)}(y)\right]^2-[F_i(y)]^2\right)dy\right|\\
&\leq&\frac{\eps}{2M^2}\int_0^\infty g^{(n)}(s;y)dy+\int_0^\infty g^{(n)}(s;y)\sum_{i\in J}\left|\left[F_i^{(n)}(y)\right]^2-[F_i(y)]^2\right|dy\\
&<& \frac{\eps}{2}+\frac{\eps}{2M^2}\int_0^\infty g^{(n)}(s;y)dy\leq\eps.
\end{eqnarray*}
Therefore, we have,  
\begin{eqnarray*}
& & \lim_{n\ra\infty}\sup_{s\in E_{loc}}\sum_{i\in\mathcal{I}_m}\left|\sigma_i^{(n)}(s)-\sigma_i(s)\right|^2\\
&\leq&\lim_{n\ra\infty}\sup_{s\in E_{loc}}\sum_{i\in\mathcal{I}_m}\left|\left(\sigma_i^{(n)}(s)\right)^2-\left(\sigma_i(s)\right)^2\right|\qquad\qquad\qquad\qquad\qquad\qquad\\
&\leq&\lim_{n\ra\infty}\sup_{s\in E_{loc}}\sum_{i\in\mathcal{I}_m}\left|\left(\sigma_i^{(n)}(s)\right)^2+\Delta t^{(n)}\left(\mu_i^{(n)}(s)\right)^2-\left(\sigma_i(s)\right)^2\right|+\lim_{n\ra\infty}\Delta t^{(n)}\sup_{s\in E_{loc}}\sum_{i\in\mathcal{I}_m}\left(\mu_i^{(n)}(s)\right)^2 
=0.
\end{eqnarray*}
\end{proof}

\begin{lem}\label{rho1}
Given Assumptions \ref{scaling}, \ref{density}, \ref{gi}, and \ref{hi} we have for all $i,j\in\N$,
\[\sup_{s\in E_{loc}}\left|\rho_{ij}^{(n)}(s)-\rho_{ij}(s)\right|\ra0.\]
\end{lem}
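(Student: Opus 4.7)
The plan is to decompose $\rho_{ij}^{(n)}(s)-\rho_{ij}(s)$ into the difference of two ratios and control numerator and denominator separately. Writing $N^{(n)}(s):=\int_0^\infty g^{(n)}(s;y)F_i^{(n)}(y)F_j^{(n)}(y)\,dy-\Delta t^{(n)}\mu_i^{(n)}(s)\mu_j^{(n)}(s)$ and $N(s):=\int_0^\infty g(s;y)F_i(y)F_j(y)\,dy=\sigma_i(s)\sigma_j(s)\rho_{ij}(s)$, the strategy is (a) to show $\sup_s|N^{(n)}(s)-N(s)|\to 0$, (b) to bound $\sigma_i^{(n)}\sigma_j^{(n)}$ away from zero uniformly in $s$ for large $n$, which both removes the indicator in the definition of $\rho_{ij}^{(n)}$ and allows a standard ratio estimate.

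For (a), I would split
\[
N^{(n)}(s)-\int g F_i F_j\,dy=\int(g^{(n)}-g)F_i^{(n)}F_j^{(n)}\,dy+\int g\bigl(F_i^{(n)}F_j^{(n)}-F_iF_j\bigr)dy-\Delta t^{(n)}\mu_i^{(n)}(s)\mu_j^{(n)}(s).
\]
The first term is bounded by $\sup_s\int|g^{(n)}-g|\,dy$ (since $|F_i^{(n)}F_j^{(n)}|\leq1$), which tends to zero by Assumption \ref{gi}. For the second term, all the functions $F_i^{(n)},F_i,F_j^{(n)},F_j$ are supported in a common compact set $K=\mathrm{supp}(F_i)\cup\mathrm{supp}(F_j)$, and for fixed $i,j$ one has $\|F_i^{(n)}-F_i\|_\infty\leq\|f_i\|_\infty\Delta x^{(n)}\to 0$, so $\|F_i^{(n)}F_j^{(n)}-F_iF_j\|_\infty=O(\Delta x^{(n)})$. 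Combined with the uniform bound $\sup_s\int_0^\infty g(s;y)dy\leq M^2$ (inherited from Assumption \ref{density} and the convergence in Assumption \ref{gi}), the second term is also $o(1)$ uniformly in $s$. Finally, by Cauchy--Schwarz and Assumption \ref{hi}, $\sup_s|\mu_i^{(n)}(s)|$ is bounded for each fixed $i$, so $\Delta t^{(n)}\mu_i^{(n)}\mu_j^{(n)}\to 0$ uniformly by Assumption \ref{scaling}.

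For (b), Lemma \ref{sigpos} yields constants $\eta_i,\eta_j>0$ with $\inf_s\sigma_i(s)\geq\eta_i$, $\inf_s\sigma_j(s)\geq\eta_j$. Lemma \ref{sigma2} applied with $m$ chosen so that $i,j\in\mathcal{I}_m$ gives $\sup_s|\sigma_i^{(n)}(s)-\sigma_i(s)|\to 0$ and similarly for $j$; hence for $n$ large enough, $\sigma_i^{(n)}(s)\sigma_j^{(n)}(s)\geq\tfrac12\eta_i\eta_j>0$ uniformly in $s\in E_{loc}$, so the indicator $\1_{(0,\infty)}(\sigma_i^{(n)}\sigma_j^{(n)})$ equals $1$.

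To conclude, for $n$ large the identity
\[
\rho_{ij}^{(n)}(s)-\rho_{ij}(s)=\frac{N^{(n)}(s)-N(s)}{\sigma_i^{(n)}(s)\sigma_j^{(n)}(s)}+N(s)\cdot\frac{\sigma_i(s)\sigma_j(s)-\sigma_i^{(n)}(s)\sigma_j^{(n)}(s)}{\sigma_i^{(n)}(s)\sigma_j^{(n)}(s)\,\sigma_i(s)\sigma_j(s)}
\]
combined with the uniform lower bound on the denominator, the uniform bound $|N(s)|\leq\sigma_i(s)\sigma_j(s)\leq M^2$, the uniform convergence of numerator from (a), and the uniform convergence $\sigma_i^{(n)}\sigma_j^{(n)}\to\sigma_i\sigma_j$ from (b), yields $\sup_{s\in E_{loc}}|\rho_{ij}^{(n)}(s)-\rho_{ij}(s)|\to 0$. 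The main technical obstacle is (a), specifically justifying the uniform bound on $\int g(s;y)\,dy$ needed to dominate the $F_i^{(n)}F_j^{(n)}\to F_iF_j$ error; everything else is essentially a bookkeeping exercise once Lemmas \ref{sigpos} and \ref{sigma2} are invoked.
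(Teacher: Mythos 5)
Your proposal is correct and follows essentially the same route as the paper: show that the ``numerator'' $\int_0^\infty g^{(n)}(s;y)F_i^{(n)}(y)F_j^{(n)}(y)\,dy-\Delta t^{(n)}\mu_i^{(n)}(s)\mu_j^{(n)}(s)$ converges uniformly to $\sigma_i(s)\sigma_j(s)\rho_{ij}(s)$ by the same decomposition used in Lemma \ref{sigma2}, and then divide, using the uniform convergence $\sigma_i^{(n)}\to\sigma_i$ from Lemma \ref{sigma2} and the uniform lower bound from Lemma \ref{sigpos} to remove the indicator and control the ratio. The only difference is cosmetic: since $i,j$ are fixed you can use the sup-norm bound $\|F_i^{(n)}-F_i\|_\infty=O(\Delta x^{(n)})$ directly rather than the $\varepsilon$-truncation over $\mathcal{I}_m$ that Lemma \ref{sigma2} needs, and your justification of the uniform bound on $\int_0^\infty g(s;y)\,dy$ via Assumption \ref{density} is exactly what the paper uses implicitly.
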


\begin{proof}
One can show similary to the proof of Lemma \ref{sigma2} that for every fixed $i,j\in\N$,
\[\rho^{(n)}_{ij}(s)\sigma^{(n)}_i(s)^{(n)}\sigma_j(s)=\int_0^\infty g^{(n)}(s;y)F^{(n)}_i(y)F^{(n)}_j(y)dy-\Delta t^{(n)}\mu_i^{(n)}(s)\mu_j^{(n)}(s)\]
converges to $\rho_{ij}(s)\sigma_i(s)\sigma_j(s)$ uniformly in $s\in E_{loc}$. Since $\sigma_i^{(n)}$ and $\sigma_j^{(n)}$ converge to $\sigma_i$ respectively $\sigma_j$ uniformly by Lemma \ref{sigma2} and since both, $\sigma_i$ and $\sigma_j$ are uniformly bounded from below by Lemma \ref{sigpos}, the claim follows.
\end{proof}

\subsection{Orthogonal decomposition}\label{c}

In order to identify the volume as the solution of some stochastic differential equation we need to decorrelate the normalised volume increments. To this end, we introduce in this subsection an orthogonal decomposition of the increments using the algorithm from Appendix \ref{decomposition}. We assume that the probability spaces are rich enough to support  i.i.d. Bernoulli random variables. 

\begin{ass}\label{U}
For every $n\in\N$ there exists a field of i.i.d.~random variables $\left(U^{(n),i}_k\right)_{k,i\in\N}$ on $\left(\Omega^{(n)},\F^{(n)},\p^{(n)}\right)$, which are independent of $S^{(n)}$, such that
\[\p\left(U^{(n),i}_k=-1\right)=\p\left(U^{(n),i}_k=1\right)=\frac{1}{2}.\]
\end{ass}

We recall the definition of the (conditional) correlation coefficients $\rho^{(n)}_{ij}(\cdot)$ $(n,i,j \in \N, j \leq i)$ from (\ref{cc}). The algorithm in Appendix \ref{decomposition} provides, for each $n \in \N$, an array $\left(c^{(n)}_{ij}(\cdot) \right)_{j \leq i}$ of measurable functions from $E_{loc}$ to $[-1,1]$ together with the ``inverse array'' $\left( \alpha^{(n)}_{ij}(\cdot) \right)_{j \leq i}$ in terms of the Borel measurable correlation coefficients $\left( \rho_{ij}^{(n)}(\cdot) \right)_{j, i}$. Now if we define for any $n,i \in \N$ and $k\leq T_n$ the random variables 
\[
	Z^{(n),i}_k:=\begin{cases}\frac{\left\langle\delta \ol{v}_k^{(n)},f_i\right\rangle}{\sigma_i^{(n)}\left(S^{(n)}_{k-1}\right)}&:\ \sigma_i^{(n)}\left(S_{k-1}^{(n)}\right)>0\\ \left(\Delta t^{(n)}\right)^{1/2}U_k^{(n),i}&:\ \sigma_i^{(n)}\left(S_{k-1}^{(n)}\right)=0\end{cases},
\]
then the conditional correlation between $Z^{(n),i}_k$ and $Z^{(n),j}_k$ is precisely $\rho_{ij}\left(S_{k-1}^{(n)}\right)$.
If we now define, for each $n\in\N$ and $k\leq T_n$, a sequence of random variables $\delta W^{(n),i}_k,\ i\in\N,$  inductively via
\[
	\delta W^{(n),1}_k:=Z^{(n),1}_k\left(S^{(n)}_{k-1}\right)
\]	
and for all $i>1$,
\begin{equation}\label{deltaW}
	\delta W^{(n),i}_k:=\begin{cases}\frac{1}{c^{(n)}_{ii}\left(S_{k-1}^{(n)}\right)}\left(Z^{(n),i}_k\left(S^{(n)}_{k-1}\right)-\sum_{j<i}c^{(n)}_{ij}\left(S_{k-1}^{(n)}\right)\delta W^{(n),j}_k\right)&:\ c^{(n)}_{ii}\left(S_{k-1}^{(n)}\right)>0\\ \left(\Delta t^{(n)}\right)^{1/2}U_k^{(n),i}&:\ c^{(n)}_{ii}\left(S_{k-1}^{(n)}\right)=0\end{cases},
\end{equation}
then the following result is an immediate corollary of Lemma \ref{drei}. 

\begin{cor}\label{vier}
Let Assumptions \ref{scaling}, \ref{density} and \ref{U} be satisfied. Then for all $n,i\in\N$and $k=1,\dots,T_n$,
\[
	Z^{(n),i}_k\left(S^{(n)}_{k-1}\right) = \sum_{j \leq i} c^{(n)}_{ij}\left(S^{(n)}_{k-1}\right) \delta W^{(n),j}_{k}
\]
as well as
\[\E\left(\left.Z_k^{(n),i}\delta W^{(n),j}_k\right|\F_{k-1}^{(n)}\right)=\Delta t^{(n)} c_{ij}^{(n)}\left(S_{k-1}^{(n)}\right)
\quad\text{and}\quad
\E\left(\left.\delta W^{(n),i}_k\delta W^{(n),j}_k\right|\F_{k-1}^{(n)}\right)=\Delta t^{(n)}\delta_{ij}.\]
\end{cor}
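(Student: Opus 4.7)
The plan is to observe that Corollary \ref{vier} is essentially an algebraic/probabilistic unpacking of the Cholesky-type decomposition constructed in Lemma \ref{drei}. Lemma \ref{drei} presumably asserts that the arrays $(c_{ij}^{(n)})$ and $(\alpha_{ij}^{(n)})$ are mutually inverse lower triangular matrices satisfying the factorisation
\[
\sum_{k \leq \min(i,j)} c_{ik}^{(n)}(s)\, c_{jk}^{(n)}(s) = \rho_{ij}^{(n)}(s), \qquad s\in E_{loc},
\]
together with the compatibility statement that whenever $c_{ii}^{(n)}(s)=0$, the row $\rho_{i\cdot}^{(n)}(s)$ lies in the span of the previous rows so that $Z^{(n),i}_k$ is degenerately expressible in terms of $\delta W^{(n),j}_k$, $j<i$. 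Granting this, the work is (a) inverting the recursion (\ref{deltaW}) to obtain the representation formula, and (b) verifying the two conditional moment identities by induction on $i$, using only (\ref{2moment}), (\ref{cc}), and Assumption \ref{U}.

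For (a), I would simply rearrange (\ref{deltaW}). On the event $\{c_{ii}^{(n)}(S_{k-1}^{(n)})>0\}$ the definition directly gives
\[
Z_k^{(n),i} = c_{ii}^{(n)}(S^{(n)}_{k-1})\, \delta W^{(n),i}_k + \sum_{j<i} c_{ij}^{(n)}(S^{(n)}_{k-1})\, \delta W^{(n),j}_k.
\]
On the complementary event the second branch of (\ref{deltaW}) uses $U^{(n),i}_k$, and the degenerate-case clause of Lemma \ref{drei} ensures that $Z^{(n),i}_k$ already coincides with $\sum_{j<i} c_{ij}^{(n)}(S^{(n)}_{k-1})\, \delta W^{(n),j}_k$ almost surely; together with $c_{ii}^{(n)}(S^{(n)}_{k-1})=0$ this still yields the stated representation. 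The base case $i=1$ is immediate once one notes that the algorithm gives $c_{11}^{(n)}\equiv 1$ whenever $\sigma_1^{(n)}>0$, and the $U^{(n),1}_k$-branch otherwise.

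For (b), I would proceed inductively on $i$. First, from (\ref{2moment}) and the defining normalisation of $Z^{(n),i}_k$,
\[
\E\!\left[\,(Z^{(n),i}_k)^2 \,\big|\, \F^{(n)}_{k-1}\right] = \Delta t^{(n)},
\]
and from (\ref{cc}),
\[
\E\!\left[\,Z^{(n),i}_k Z^{(n),j}_k \,\big|\, \F^{(n)}_{k-1}\right] = \Delta t^{(n)}\, \rho_{ij}^{(n)}(S^{(n)}_{k-1}),
\]
where on the degenerate set $\{\sigma^{(n)}_i(S^{(n)}_{k-1})=0\}$ the identities still hold because $U^{(n),i}_k$ has conditional second moment $1$ and is independent of $\F^{(n)}_{k-1}$ by Assumption \ref{U} (and the cross term is absorbed into the indicator in (\ref{cc})). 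Assuming inductively that $\E[\delta W^{(n),j}_k \delta W^{(n),\ell}_k|\F^{(n)}_{k-1}] = \Delta t^{(n)}\delta_{j\ell}$ for $j,\ell<i$, plug the representation of $Z^{(n),i}_k$ into $\E[Z^{(n),i}_k \delta W^{(n),j}_k|\F^{(n)}_{k-1}]$ to obtain $\Delta t^{(n)} c_{ij}^{(n)}(S^{(n)}_{k-1})$. The second identity then follows by using (\ref{deltaW}) to isolate $\delta W^{(n),i}_k$, squaring, and invoking the factorisation identity from Lemma \ref{drei} to cancel $\sum_{j<i}(c_{ij}^{(n)})^2$ against $\rho_{ii}^{(n)}=1$ up to the $(c_{ii}^{(n)})^2$ term; the degenerate branch is handled directly by Assumption \ref{U}.

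The only delicate point, and the one I expect to require the most care, is consistent treatment of the degenerate cases $\sigma^{(n)}_i(S^{(n)}_{k-1})=0$ and $c^{(n)}_{ii}(S^{(n)}_{k-1})=0$: one must check that the substitution by $(\Delta t^{(n)})^{1/2} U^{(n),i}_k$ preserves both the representation and the orthogonality identities on those events. This is why Assumption \ref{U} postulates an \emph{independent} family of Bernoulli signs with unit second moment: they populate the null directions of the correlation matrix without disturbing the inductive moment computation. Once this is checked, the conclusions of Corollary \ref{vier} drop out directly from Lemma \ref{drei} with no further probabilistic input.
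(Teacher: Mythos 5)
Your proposal is correct and follows the paper's own route: the paper offers no separate argument for Corollary \ref{vier} beyond declaring it an immediate consequence of Lemma \ref{drei}, applied conditionally on $\F_{k-1}^{(n)}$ with the $\Delta t^{(n)}$-normalisation supplied by (\ref{2moment}), (\ref{cc}) and Assumption \ref{U}, which is exactly what you reconstruct, including the treatment of the degenerate events via the independent signs $U^{(n),i}_k$. The one caveat is that in your step (b) the identity $\E\bigl[Z^{(n),i}_k\delta W^{(n),j}_k\,\big|\,\F^{(n)}_{k-1}\bigr]=\Delta t^{(n)}c^{(n)}_{ij}\bigl(S^{(n)}_{k-1}\bigr)$ for $j<i$ should be obtained by an inner induction on $j$ from the recursive definition of $\delta W^{(n),j}_k$ (as in the proof of Lemma \ref{drei}), not by substituting the representation of $Z^{(n),i}_k$, since the latter already invokes the orthogonality $\E\bigl[\delta W^{(n),i}_k\delta W^{(n),j}_k\,\big|\,\F^{(n)}_{k-1}\bigr]=0$ that is still to be established at that stage.
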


In order to see that the random variables $\delta W^{(n),i}_k,\ i\in\N$, allow us to represent the volume process as a stochastic integral, we define for all $i,j,n\in\N$ a function $d_{ij}^{(n)}:E_{loc}\ra[-M,M]$ via
\[d_{ij}^{(n)}(s):=\begin{cases}\sigma_i^{(n)}(s)c_{ij}^{(n)}(s)&:\ j\leq i\\0&:\ j>i\end{cases}.\]

Note that for each $m\in\N$, the matrix $\left(d_{ij}^{(n)}(s)\right)_{i,j\leq m}$ is the triangular matrix that one obtains from the Cholesky factorization of the covariance matrix $\left(\sigma_i^{(n)}(s)\sigma_j^{(n)}(s)\rho_{ij}^{(n)}(s)\right)_{i,j\leq m}$. Therefore, the functions $\left(d^{(n)}_{ij}(s)\right)_{i,j\in\N}$ will serve as the volatility operator in the stochastic equation representing $V^{(n)}$. Indeed, equations (\ref{1moment}), (\ref{2moment}) and Corollary \ref{vier} imply that almost surely
\begin{equation}\label{SDE_V}
\begin{split}
V^{(n)}(t,x)&=V_0(x)+\sum_if_i(x)\sum_{k=1}^{\lfloor t/\Delta t^{(n)}\rfloor}\left\langle\delta V^{(n)}_k,f_i\right\rangle\\
&=V_0(x)+\sum_if_i(x)\sum_{k=1}^{\lfloor t/\Delta t^{(n)}\rfloor}\left[\mu_i^{(n)}\left(S^{(n)}_{k-1}\right)\Delta t^{(n)}+\sigma_i^{(n)}\left(S_{k-1}^{(n)}\right)\delta Z_k^{(n),i}\right]\\
&=V_0(x)+\sum_if_i(x)\sum_{k=1}^{\lfloor t/\Delta t^{(n)}\rfloor}\left[\mu_i^{(n)}\left(S^{(n)}_{k-1}\right)\Delta t^{(n)}+\sigma_i^{(n)}\left(S_{k-1}^{(n)}\right)\sum_{j\leq i}c^{(n)}_{ij}\left(S_{k-1}^{(n)}\right)\delta W_k^{(n),j}\right]
\end{split}
\end{equation}\normalsize
The convergence of the drift has already been established. In the following two subsections we prove the convergence of the volatility operator and the martingale driving the SDE.

\subsection{Convergence of the volatility operator}\label{vol-operator}

In this section we prove convergence of the functions $c^{(n)}_{ij}(\cdot)$ and $d^{(n)}_{ij}(\cdot)$. As a byproduct we obtain a key estimate for the functions $\alpha^{(n)}_{ij}(\cdot)$. This estimate allows, for instance, to verify that the random variables $\delta W^{(n),i}_k$, $k \in \N$, satisfy the Lindeberg condition in the proof of Theorem \ref {spacetime}.

\begin{lem}\label{ca}
Suppose that Assumptions \ref{scaling}, \ref{density}, \ref{gi}, and \ref{hi} are satisfied. Then there exist for every $i\in\N$ and $j\leq i$ functions $c_{ij},\alpha_{ij}:\ E_{loc}\ra\R$ such that
\[\sup_{s\in E_{loc}}\left|c_{ij}^{(n)}(s)-c_{ij}(s)\right|\ra0\quad\text{and}\quad\sup_{s\in E_{loc}}\left|\alpha_{ij}^{(n)}(s)-\alpha_{ij}(s)\right|\ra0.\]
Moreover, for all $i\in \N$ and $j\leq i$,
\[\inf_{s\in E_{loc}}c_{ii}(s)>0\quad\text{and}\quad \sup_{s\in E_{loc}}\left|\alpha_{ij}(s)\right|<\infty.\]
\end{lem}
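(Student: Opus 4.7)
The plan is to proceed by strong induction on the row index $i$, propagating the four conclusions through the recursive structure of the Cholesky-type algorithm of Appendix~\ref{decomposition}. The essential input is Lemma~\ref{rho1}, giving $\rho^{(n)}_{ij}\to\rho_{ij}$ uniformly on $E_{loc}$, together with Lemma~\ref{sigpos} and the uniform upper bound on $\sigma_p$ provided by Assumption~\ref{density} and Lemma~\ref{sigma2}.

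For $i=1$ the algorithm applied to any correlation matrix returns $c^{(n)}_{11}\equiv 1$ and $\alpha^{(n)}_{11}\equiv 1$, so setting $c_{11}\equiv\alpha_{11}\equiv 1$ the claim is immediate. In the induction step, assume all four conclusions hold for row indices strictly less than $i$. The standard Cholesky recursion reads
\[
c^{(n)}_{ij}(s)=\frac{1}{c^{(n)}_{jj}(s)}\Bigl(\rho^{(n)}_{ij}(s)-\sum_{k<j}c^{(n)}_{ik}(s)\,c^{(n)}_{jk}(s)\Bigr)\quad(j<i),\qquad c^{(n)}_{ii}(s)=\Bigl(1-\sum_{k<i}(c^{(n)}_{ik}(s))^2\Bigr)^{1/2},
\]
with analogous back-substitution formulas for $\alpha^{(n)}$. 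Define $c_{ij},\alpha_{ij}$ by the same expressions, with every $(n)$-superscripted quantity replaced by its limit. The inductive hypothesis yields $\inf_s c_{jj}(s)>0$ for $j<i$, hence eventually $c^{(n)}_{jj}(s)\ge\tfrac12\inf_s c_{jj}$ uniformly in $s$, so all divisions are safe. Since the sums are finite and the factors uniformly bounded in $[-1,1]$, uniform convergence propagates through the algebraic operations to give $c^{(n)}_{ij}\to c_{ij}$ uniformly for every $j\le i$, and by the analogous back-substitution $\alpha^{(n)}_{ij}\to\alpha_{ij}$ uniformly once the lower bound on $c_{ii}$ below is in hand.

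The main obstacle is the uniform lower bound $\inf_s c_{ii}(s)>0$. Writing $R_\ell(s):=(\rho_{pq}(s))_{p,q\le\ell}$, $\Sigma_\ell(s):=(\int_0^\infty g(s;y)F_p(y)F_q(y)\,dy)_{p,q\le\ell}$, and $D_\ell(s):=\mathrm{diag}(\sigma_p(s))_{p\le\ell}$, we have $\Sigma_\ell=D_\ell R_\ell D_\ell$ and
\[
c_{ii}^2(s)=\frac{\det R_i(s)}{\det R_{i-1}(s)}=\frac{\det\Sigma_i(s)}{\det\Sigma_{i-1}(s)\cdot\sigma_i^2(s)}.
\]
By Assumption~\ref{density} and Lemma~\ref{sigma2} each $\sigma_p(s)$ is uniformly bounded above on $E_{loc}$, so Hadamard's inequality gives $\det\Sigma_{i-1}(s)\cdot\sigma_i^2(s)\le\prod_{p\le i}\sigma_p^2(s)\le C_i<\infty$ uniformly in $s$. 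For the numerator I will use the Gram-determinant identity
\[
\det\Sigma_i(s)=\frac{1}{i!}\int_{\R_+^i}g(s;y_1)\cdots g(s;y_i)\,\bigl(\det(F_p(y_q))_{p,q\le i}\bigr)^2\,dy_1\cdots dy_i.
\]
By Assumption~\ref{gi} the pointwise infimum $g_\ast(y):=\inf_s g(s;y)$ is strictly positive for every $y$, and since the Haar primitives $(F_p)_{p\le i}$ are linearly independent with common support in a bounded interval, replacing $g(s;y_q)$ by $g_\ast(y_q)$ yields a strictly positive $s$-independent lower bound for $\det\Sigma_i(s)$. Combining, $\inf_s c_{ii}(s)>0$, closing the induction.

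Finally, the bound $\sup_s|\alpha_{ij}(s)|<\infty$ is immediate from the triangular-inverse formula expressing $\alpha_{ij}$ as a polynomial in $(c_{pq})_{p\le q\le i}$ (each bounded by $1$ in absolute value) divided by $\prod_{p\le i}c_{pp}(s)$, which is bounded below by $\prod_{p\le i}\inf_s c_{pp}(s)>0$ from the induction just carried out; the uniform convergence $\alpha^{(n)}_{ij}\to\alpha_{ij}$ then follows from the uniform convergence of the $c^{(n)}_{pq}$ together with the eventual uniform lower bound on $c^{(n)}_{pp}$.
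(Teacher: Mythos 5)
Your proposal is correct and follows the paper's induction skeleton (base case $c_{11}\equiv\alpha_{11}\equiv1$, propagation of uniform convergence through the recursion using the eventual uniform positivity of $c^{(n)}_{jj}$, and backward substitution for the $\alpha_{ij}$), but it replaces the paper's argument for the crucial step $\inf_{s}c_{ii}(s)>0$ by a genuinely different one. The paper identifies $c_{ii}^2(s)$ with the quadratic form $\int_0^\infty g(s;y)\bigl[\sum_{l\leq i}\beta_l(s)F_l(y)\bigr]^2dy$, where $\beta_i=1/\sigma_i$, and argues by contradiction: if the infimum were zero, linear independence of the Haar functions would force $\beta_l(s_n)\to0$ for all $l\leq i$, which is impossible because $\sigma_i$ is bounded above. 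You instead write $c_{ii}^2=\det\Sigma_i/(\det\Sigma_{i-1}\,\sigma_i^2)$, bound the denominator by Hadamard's inequality and $\sigma_p^2\leq M^2$, and bound the numerator below by the Gram/Andr\'eief integral identity together with $\inf_s g(s;y)>0$ and linear independence of the $F_p$. Both arguments rest on the same three ingredients (positivity of $g$ in $y$, linear independence of the $F_p$, upper bound on the $\sigma_p$); yours is more explicit and quantitative (it produces the bound $c_{ii}^2\geq\kappa_i/M^{2i}$), while the paper's avoids determinant identities at the cost of a softer contradiction argument. One small caveat: the function $g_\ast(y):=\inf_{s\in E_{loc}}g(s;y)$ is an uncountable infimum of measurable functions and need not itself be measurable, so the displayed lower bound $\frac{1}{i!}\int\prod_q g_\ast(y_q)(\det F_p(y_q))^2\,dy$ is not literally well defined. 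This is easily patched: argue by contradiction with a sequence $(s_n)$ achieving $\det\Sigma_i(s_n)\to0$, pass to an a.e.\ convergent subsequence of the non-negative integrands, and pick a tuple $y^*$ with $\det(F_p(y^*_q))\neq0$ (such tuples form a set of positive measure since the $F_p$ are continuous and linearly independent) to contradict $\inf_s g(s;y^*_q)>0$ --- which brings your argument back in line with the contradiction structure the paper itself uses. With that one-line repair your proof is complete.
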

\begin{proof}
The claim is proven by induction on $i$. Clearly, for $i=1$ we have $c_{11}\equiv 1\equiv\alpha_{11}$. Now assume the claim is true for all functions $c^{(n)}_{jl},\alpha_{jl}^{(n)}$ with $l\leq j\leq i-1$. Especially, this implies that for all $j<i$ and for $n$ large enough we have $\inf_{s\in E_{loc}}c_{jj}^{(n)}(s)>0$ and hence 
\[c_{ij}^{(n)}(s)=\frac{1}{c_{jj}^{(n)}(s)}\left(\rho^{(n)}_{ij}(s)-\sum_{l<j}c_{il}^{(n)}(s)c_{jl}^{(n)}(s)\right).\]
By iterative reasoning from $j=1$ to $j=i-1$ we see that this term converges uniformly in $s\in E_{loc}$ to some function $c_{ij}$ (defined via a similar recursion scheme) due to the induction hypothesis and Lemma \ref{rho1}. The same is then true for 
\[c_{ii}^{(n)}(s)=\left(1-\sum_{j<i}\left(c_{ij}^{(n)}(s)\right)^2\right)^{1/2}.\]
Next we have to show that the limit satisfies $\inf_{s\in E_{loc}}c_{ii}(s)>0$. First, note that by the induction hypothesis for large enough $n$, $c^{(n)}_{jj}(s)>0$ for all $j<i$ and hence by equation (\ref{WZ}), 
\begin{eqnarray*}
Z^{(n),i}(s)-\sum_{j<i}c_{ij}^{(n)}\left(s\right) W^{(n),j}(s)
&=&Z^{(n),i}(s)-\sum_{j<i}c_{ij}^{(n)}\left(s\right)\sum_{l\leq j}\alpha^{(n)}_{jl}\left(s\right)Z^{(n),l}(s)\\
&=&Z^{(n),i}(s)-\sum_{l<i}Z^{(n),l}(s)\sum_{l\leq j<i}c_{ij}^{(n)}\left(s\right)\alpha^{(n)}_{jl}\left(s\right).
\end{eqnarray*}
We set for all $l<i$,
\[\beta_l^{(n)}(s):=\begin{cases}\frac{-1}{\sigma^{(n)}_l(s)}\sum_{l\leq j<i}c_{ij}^{(n)}(s)\alpha^{(n)}_{jl}(s)&:\ \text{if }\sigma^{(n)}_l(s)>0\\0&:\ \text{else}\end{cases}\]
as well as
\[\beta_i^{(n)}(s):=\begin{cases}\frac{1}{\sigma^{(n)}_i(s)}&:\ \text{if }\sigma^{(n)}_i(s)>0\\0&:\ \text{else}\end{cases}.\]
By the induction hypothesis, Lemma \ref{sigpos}, and Lemma \ref{sigma2} we know that for every $j\leq i$ there exists a bounded function $\beta_j:\ E\ra\R$ such that
\begin{equation}\label{beta}
\sup_{s\in E_{loc}}\left|\beta^{(n)}_j(s)-\beta_j(s)\right|\ra0.
\end{equation}
But for $n$ large enough we have by definition for all $s\in E_{loc}$,
\begin{eqnarray*}
&& c_{ii}^{(n)}(s)W^{(n),i}(s) \\ &=&Z^{(n),i}(s)-\sum_{j<i}c_{ij}^{(n)}\left(s\right) W^{(n),j}(s)\\
&=&\Delta v^{(n)}\left\langle X_1^{(n)}(s)\sum_{j=0}^{\lfloor\cdot/\Delta x^{(n)}\rfloor}\1_{I^{(n)}\left(X_2^{(n)}(s)\right)}\left(x_j^{(n)}\right),\sum_{l\leq i}\beta_l^{(n)}\left(s\right)f_l\right\rangle-\Delta t^{(n)}\sum_{l\leq i}\beta_l^{(n)}\left(s\right)\mu_l^{(n)}(s)
\end{eqnarray*}
and then also
\begin{eqnarray*}
\Delta t^{(n)}\left(c_{ii}^{(n)}(s)\right)^2&=&\E\left(c^{(n)}_{ii}(s) W^{(n),i}(s)\right)^2\\
&=&\Delta t^{(n)}\int_0^\infty g^{(n)}(s;y)\left[\sum_{l\leq i}\beta^{(n)}_l(s)F^{(n)}_l(y)\right]^2dy-\left[\Delta t^{(n)}\sum_{l\leq i}\beta^{(n)}_l(s)\mu^{(n)}_l(s)\right]^2.
\end{eqnarray*}
Clearly, (\ref{beta}) implies that $\sup_{n\in\N}\sup_{s\in E_{loc}}\left|\beta_l^{(n)}(s)\right|=:C<\infty$ for all $l\leq i$. Hence, the last term on the right hand side in the above equation converges to zero uniformly in $s\in E_{loc}$ using that  $\sup_{n\in\N}\sup_{s\in E_{loc}}\left|\mu_l^{(n)}(s)\right|<\infty$ for all $l\leq i$ by Lemma \ref{mu}. Moreover,
\begin{eqnarray*}
& & \sup_{s\in E_{loc}}\left|\int_0^\infty \left(g^{(n)}(s;y)-g(s;y)\right)\left[\sum_{l\leq i}\beta^{(n)}_l(s)F^{(n)}_l(y)\right]^2dy\right|\\
& \leq & C^2i^2\cdot\sup_{s\in E_{loc}}\int_0^\infty \left|g^{(n)}(s;y)-g(s;y)\right|dy \ra 0
\end{eqnarray*}
and by dominated convergence we deduce that, uniformly in $s\in E_{loc}$,
\begin{eqnarray*}
\int_0^\infty g(s;y)\left[\sum_{l\leq i}\beta^{(n)}_l(s)F^{(n)}_l(y)\right]^2dy\ra
\int_0^\infty g(s;y)\left[\sum_{l\leq i}\beta_l(s)F_l(y)\right]^2dy.
\end{eqnarray*}
Therefore,
\begin{equation}\label{cii}
c_{ii}(s)=\int_0^\infty g(s;y)\left[\sum_{l\leq i}\beta_l(s)F_l(y)\right]^2dy.
\end{equation}
Now suppose that $\inf_{s\in E_{loc}}c_{ii}(s)=0$. Since $g(\cdot;y)$ is bounded away from zero for all $y\in\R_+$ by Assumption \ref{gi}, we deduce from (\ref{cii}) that there must exist an $E_{loc}$-valued sequence $(s_n)$ such that 
\[\sum_{l\leq i}\beta_l(s_n)F_l(y)\ra 0\quad\text{for almost all }y\in\R_+.\]
Since $\sup_{s\in E_{loc}}\left|\beta_l(s)\right|<\infty$ for all $l\leq i$, this implies that there exists some vector $b\in\R^i$ such that
\[\sum_{l\leq i}b_lF_l(y)=0\quad\text{for almost all }y\in\R_+\]
and thus also
\[H(y):=\sum_{l\leq i}b_lf_l(y)=0\quad\text{for almost all }y\in\R_+.\]
However,
\[0=\left\Vert H\right\Vert_{L^2}^2=\sum_{l\leq i}b_l^2\]
implies that $b_l=0$ for all $l\leq i$ and hence we must have $\beta_l(s_n)\ra 0$ for all $l\leq i$. But for $l=i$ this gives a contradiction, since
\[\sup_{s\in E_{loc}}\left(\sigma_i(s)\right)^2=\sup_{s\in E_{loc}}\int_0^\infty g(s;y)\left[F_i(y)\right]^2dy\leq M^2<\infty.\]
Hence, $\sigma_i$ is bounded and thus $\beta_i$ is bounded away from $0$. This proves that $\inf_{s\in E_{loc}}c_{ii}(s)>0$.\\
Now the convergence of the $\alpha_{ij}^{(n)},\ j\leq i,$ to some $\alpha_{ij}$ satisfying $\sup_{s\in E_{loc}}\left|\alpha_{ij}(s)\right|<\infty$ follows from the definition of the $\alpha_{ij}^{(n)}$ by backwards iteration from $j=i$ to $j=1$.
\end{proof}

The following remark is key for our subsequent analysis. 

\begin{rem}\label{q}
If Assumptions \ref{scaling}, \ref{density}, \ref{gi}, and \ref{hi} are satisfied, then there exists according to Lemmata \ref{sigpos}, \ref{sigma2}, and \ref{ca} for every $m\in\N$ a constant $q_m<\infty$ and an $n_m\in\N$ such that for all $n\geq n_m$ and $j\leq i\leq m$,
\[\sup_{s\in E_{loc}}\frac{\left|\alpha_{ij}^{(n)}(s)\right|}{\sigma^{(n)}_j(s)}<q_m.\]
\end{rem}

Let us snow turn to the convergence of the volatility operator. Similarly, to the functions $d^{(n)}_{ij}$ we set for all $i,j\in\N$ and $s\in E_{loc}$,
\[d_{ij}(s):=\begin{cases}\sigma_i(s)c_{ij}(s)&:\ j\leq i\\ 0&:\ j>i\end{cases}.\]

\begin{lem}\label{sumc} Given Assumptions \ref{scaling}, \ref{density}, \ref{gi}, and \ref{hi} we have for all $m\in\N$, 
\[\sup_{s\in E_{loc}}\sum_{i\in\mathcal{I}_m}\sum_{j\leq i}\left(d^{(n)}_{ij}(s)-d_{ij}(s)\right)^2\ra0.\]
\end{lem}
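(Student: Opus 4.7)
The plan is to split the Hilbert--Schmidt discrepancy into a ``$\sigma$-part'' and a ``$c$-part'' and then to truncate the outer index $i$ to a finite set via Lemma \ref{eps}, where the uniform convergence statements from Lemmata \ref{sigma2} and \ref{ca} can be applied directly.

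Concretely, I would write
\[
d_{ij}^{(n)}(s)-d_{ij}(s)=\bigl(\sigma_i^{(n)}(s)-\sigma_i(s)\bigr)c_{ij}^{(n)}(s)+\sigma_i(s)\bigl(c_{ij}^{(n)}(s)-c_{ij}(s)\bigr),
\]
apply $(a+b)^2\leq 2a^2+2b^2$, and use the Cholesky row-normalisation $\sum_{j\leq i}(c_{ij}^{(n)}(s))^2=\sum_{j\leq i}c_{ij}(s)^2=1$, which is a direct consequence of the recursion $c_{ii}^{(n)}=(1-\sum_{j<i}(c_{ij}^{(n)})^2)^{1/2}$ used in the proof of Lemma \ref{ca}. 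This yields
\[
\sum_{i\in\mathcal{I}_m}\sum_{j\leq i}(d_{ij}^{(n)}(s)-d_{ij}(s))^2\leq 2\sum_{i\in\mathcal{I}_m}(\sigma_i^{(n)}(s)-\sigma_i(s))^2+2\sum_{i\in\mathcal{I}_m}\sigma_i(s)^2\sum_{j\leq i}(c_{ij}^{(n)}(s)-c_{ij}(s))^2.
\]
The first sum on the right converges to $0$ uniformly in $s\in E_{loc}$ by Lemma \ref{sigma2}.

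The main obstacle is the second sum, since $\mathcal{I}_m$ is infinite while Lemma \ref{ca} only supplies uniform convergence of each individual $c_{ij}^{(n)}$. To deal with this I would fix $\eps>0$ and use Lemma \ref{eps} to choose a finite $J\subset\mathcal{I}_m$ with $\sum_{i\in\mathcal{I}_m\setminus J}F_i(y)^2\leq\eps$ for every $y\in\R_+$. Combining $\sigma_i(s)^2=\int_0^\infty g(s;y)F_i(y)^2\,dy$ with Tonelli's theorem and the bound $\int_0^\infty g(s;y)\,dy\leq M^2$, which follows from Assumption \ref{density} by setting $D=\R_+$ and using $|\omega_k^{(n)}|\leq M$, one gets $\sup_{s}\sum_{i\in\mathcal{I}_m\setminus J}\sigma_i(s)^2\leq\eps M^2$. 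Together with the crude estimate $\sum_{j\leq i}(c_{ij}^{(n)}(s)-c_{ij}(s))^2\leq 4$ (again from the row-normalisation), the tail of the second sum over $\mathcal{I}_m\setminus J$ is at most $8\eps M^2$, uniformly in $s$ and $n$.

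For the remaining finite head indexed by $i\in J$, using $\sigma_i(s)^2\leq M^2$ and the uniform convergence $\sup_s|c_{ij}^{(n)}(s)-c_{ij}(s)|\to 0$ from Lemma \ref{ca} for each of the finitely many pairs $(i,j)$ with $j\leq i\in J$, this head vanishes uniformly in $s$ as $n\to\infty$. Hence $\limsup_{n\to\infty}\sup_{s\in E_{loc}}\sum_{i\in\mathcal{I}_m}\sum_{j\leq i}(d_{ij}^{(n)}(s)-d_{ij}(s))^2\leq 8\eps M^2$, and letting $\eps\to 0$ concludes the proof.
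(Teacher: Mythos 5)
Your proof is correct and follows essentially the same route as the paper: split $d^{(n)}_{ij}-d_{ij}$ into a $\sigma$-difference and a $c$-difference, dispose of the $\sigma$-part via Lemma \ref{sigma2} and the row-normalisation $\sum_{j\leq i}c_{ij}^2=1$, and control the $c$-part by truncating $\mathcal{I}_m$ to a finite set via Lemma \ref{eps} (using $\int_0^\infty g(s;y)\,dy\leq M^2$ for the tail) before invoking the termwise uniform convergence of Lemma \ref{ca} on the finite head. The only cosmetic difference is that the paper attaches the superscript $(n)$ to $\sigma_i$ rather than to $c_{ij}$ in the splitting and bounds the tail with $g^{(n)}$ and $F_i^{(n)}$ instead of $g$ and $F_i$; the argument is otherwise identical.
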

\begin{proof} Fix $m\in\N$ and let $\eps>0$. According to Lemma \ref{eps} we can a finite subset $J\subset\mathcal{I}_m$ such that for all $n\in\N$ and $y\in\R_+$,
\[\sum_{i\in\mathcal{I}_m\backslash J}\left(F^{(n)}_i(y)\right)^2\leq\frac{\eps}{8M^2}.\]
Hence for any $n\in\N$ and $s\in E_{loc}$,
\begin{eqnarray*}
&&\sum_{i\in\mathcal{I}_m\backslash J} \left(\sigma_i^{(n)}(s)\right)^2\sum_{j\leq i}\left(c_{ij}^{(n)}(s)-c_{ij}(s)\right)^2 \\ &\leq& 2\sum_{i\in\mathcal{I}_m\backslash J} \left(\sigma^{(n)}_i(s)\right)^2\sum_{j\leq i}\left[\left(c_{ij}^{(n)}(s)\right)^2+\left(c_{ij}(s)\right)^2\right]\\
&=& 4\sum_{i\in\mathcal{I}_m\backslash J} \left(\sigma_i^{(n)}(s)\right)^2 \\ &\leq& 4\sum_{i\in\mathcal{I}_m\backslash J}\int_0^\infty g^{(n)}(s;y)dx\left(F^{(n)}_i(y)\right)^2dy \leq \frac{\eps}{2}.
\end{eqnarray*}
According to Lemma \ref{ca} there exists for all $i,j\in\N$ an $n_{ij}=n_{ij}(\eps,m)$ such that for any $n\geq n_{ij}$,
\[\sup_{s\in E_{loc}}\left|c_{ij}^{(n)}(s)-c_{ij}(s)\right|^2<\frac{\eps}{2|J|M^2m}.\]
Hence, for any $n\geq n_0:=\max\{n_{ij}:\ j\leq i,\, i\in J\}$ and $s\in E_{loc}$,
\begin{eqnarray*}
\sum_{i\in\mathcal{I}_m}\left(\sigma_i^{(n)}(s)\right)^2\sum_{j\leq i}\left(c_{ij}^{(n)}(s)-c_{ij}(s)\right)^2&\leq&\frac{\eps}{2}+
\sum_{i\in J}\left(\sigma^{(n)}_i(s)\right)^2\sum_{j\leq i}\left(c_{ij}^{(n)}(s)-c_{ij}(s)\right)^2\\
&<&\frac{\eps}{2}+\sum_{i\in J}\left(\sigma^{(n)}_i(s)\right)^2\frac{\eps}{2M^2m}\leq\eps.
\end{eqnarray*}
Now the claim follows from the above and Lemma \ref{sigma2} because
\[\sum_{i\in\mathcal{I}_m}\sum_{j\leq i}\left(d^{(n)}_{ij}(s)-d_{ij}(s)\right)^2\leq 
2\sum_{i\in\mathcal{I}_m}\left(\sigma_i^{(n)}(s)\right)^2\sum_{j\leq i}\left(c^{(n)}_{ij}(s)-c_{ij}(s)\right)^2+2\sum_{i\in\mathcal{I}_m}\left(\sigma^{(n)}_i(s)-\sigma_i(s)\right)^2.\]
\end{proof}

\subsection{Convergence of the martingale to a Gaussian random measure}\label{gaussian}

We are now going to prove the convergence of the martingale driving the SDE in (\ref{SDE_V}) to a cylindrical Brownian motion on $L^2(\R_+)$. We start with the following simple lemma.

\begin{lem}\label{Cauchy}
Let Assumptions \ref{scaling}, \ref{density}, and \ref{U} be satisfied. Then there exists for any $\varphi\in L^2(\R_+)$ and $\eps>0$ an $m_0\in\N$ such that for all $m_2\geq m_1\geq m_0$, $n\in\N$, and $t\in[0,T]$,
\[\E\left(\sum_{k=1}^{\lfloor t/\Delta t^{(n)}\rfloor}\sum_{i=m_1+1}^{m_2}\delta W_k^{(n),i}\langle\varphi,f_i\rangle\right)^2<\eps.\]
\end{lem}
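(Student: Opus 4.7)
The plan is to exploit the conditional orthogonality of the $\delta W_k^{(n),i}$ provided by Corollary \ref{vier} and then use the fact that $\varphi \in L^2(\R_+)$ makes its Haar coefficients square-summable.

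First I would verify that the $\delta W_k^{(n),i}$ are $\F_k^{(n)}$-martingale differences (after suitably enlarging the filtration to carry the Bernoulli field from Assumption \ref{U}). For $i=1$ this is immediate because $\delta \ol{v}_k^{(n)}$ is a martingale difference and $\sigma_1^{(n)}(S_{k-1}^{(n)})$ is $\F_{k-1}^{(n)}$-measurable, or because $U_k^{(n),1}$ is centered and independent of $\F_{k-1}^{(n)}$. Inductively, $\delta W_k^{(n),i}$ is a linear combination with $\F_{k-1}^{(n)}$-measurable coefficients of terms with vanishing conditional mean, so $\E(\delta W_k^{(n),i}\mid\F_{k-1}^{(n)})=0$.

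Next I would use this to kill all cross-time contributions in the expanded square. For $k_1<k_2$, conditioning on $\F_{k_2-1}^{(n)}$ gives $\E(\delta W_{k_1}^{(n),i}\delta W_{k_2}^{(n),j})=0$. Thus
\[
\E\left(\sum_{k=1}^{\lfloor t/\Delta t^{(n)}\rfloor}\sum_{i=m_1+1}^{m_2}\delta W_k^{(n),i}\langle\varphi,f_i\rangle\right)^2
=\sum_{k=1}^{\lfloor t/\Delta t^{(n)}\rfloor}\sum_{i,j=m_1+1}^{m_2}\langle\varphi,f_i\rangle\langle\varphi,f_j\rangle\,\E\bigl(\delta W_k^{(n),i}\delta W_k^{(n),j}\bigr).
\]
Applying Corollary \ref{vier} to the inner expectation kills the off-diagonal $i\neq j$ terms, leaving
\[
\sum_{k=1}^{\lfloor t/\Delta t^{(n)}\rfloor}\Delta t^{(n)}\sum_{i=m_1+1}^{m_2}\langle\varphi,f_i\rangle^2\;\leq\;T\sum_{i=m_1+1}^{m_2}\langle\varphi,f_i\rangle^2.
\]

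Finally, since the Haar system $(f_i)$ is an orthonormal basis of $L^2(\R_+)$ (the functions $g_{-1}^k$ handle the integer intervals and the $g_l^k$ for $l\geq 0$ form the usual Haar basis on each interval), Parseval gives $\sum_{i\geq 1}\langle\varphi,f_i\rangle^2=\|\varphi\|_{L^2}^2<\infty$. Thus the tails are Cauchy: choose $m_0$ so large that $\sum_{i>m_0}\langle\varphi,f_i\rangle^2<\eps/T$; this bound is uniform in $n$ and $t\in[0,T]$. There is no real obstacle here — the only thing requiring a moment of care is the martingale-difference property of $\delta W_k^{(n),i}$ and the fact that the bound is genuinely uniform in $n$ because $t/\Delta t^{(n)}\cdot\Delta t^{(n)}\leq T$.
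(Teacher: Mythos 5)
Your proof is correct and follows essentially the same route as the paper: expand the square, use the conditional orthogonality from Corollary \ref{vier} (which the paper invokes in one line, implicitly covering the cross-time and cross-index cancellations you spell out), and then bound the tail $\sum_{i>m_0}\langle\varphi,f_i\rangle^2$ via Parseval for the Haar basis with the same choice of $m_0$. The only difference is that you make the martingale-difference verification explicit, which the paper leaves implicit.
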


\begin{proof}
We choose
\[m_0:=\inf\left\{m\in\N:\ \sum_{i=m+1}^\infty\langle\varphi,f_i\rangle^2<\frac{\eps}{T}\right\}.\]
Then due to Corollary \ref{vier} we have for all $n\in\N$ and $t\in[0,T]$,
\[\E\left(\sum_{k=1}^{\lfloor t/\Delta t^{(n)}\rfloor}\sum_{i=m_1+1}^{m_2}\delta W_k^{(n),i}\langle\varphi,f_i\rangle\right)^2=
\sum_{k=1}^{\lfloor t/\Delta t^{(n)}\rfloor}\sum_{i=m_1+1}^{m_2}\Delta t^{(n)}\langle\varphi,f_i\rangle^2\leq T\sum_{i=m_1+1}^{m_2}\langle\varphi,f_i\rangle^2<\eps.\]
\end{proof}

The preceding lemma allows us to define for each $n\in \N$ a so called $L^2(\R_+)^\#$-semimartingale (for the definition see \cite{KP2}): for any $t\in[0,T]$ and $\varphi\in L^2(\R_+)$ we set
\begin{equation} \label{Wn}
	W^{(n)}(\varphi,t):=\sum_{k=1}^{\lfloor t/\Delta t^{(n)}\rfloor}\sum_i\delta W^{(n),i}_k \left\langle \varphi,f_i\right\rangle,
\end{equation}
where the above series is defined as the $L^2\left(\p^{(n)}\right)$-limit. 

\begin{thm}\label{spacetime}
Suppose that Assumptions \ref{scaling}, \ref{density}, \ref{gi}, \ref{hi}, and \ref{U} are satisfied.
Let $l\in\N$ and take any $\varphi_1,\dots,\varphi_l\in L^2(\R_+)$. Then as $n\ra\infty$, 
\[\left(W^{(n)}(\varphi_1,\cdot),\dots,W^{(n)}(\varphi_l,\cdot)\right)\RA\left(W(\varphi_1,\cdot),\dots,W(\varphi_l,\cdot)\right)\]
in $\mathcal{D}\left([0,T];\R^l\right)$, where $W$ is a cylindrical Brownian motion on $L^2(\R_+)$. Thus, in the terminology of \cite{KP2}, $W$ is a centered Gaussian $L^2(\R_+)^\#$-semimartingale with covariance structure 
\[
	\E\left[W(\varphi_1,t)W(\varphi_2,s)\right]=(t\wedge s)\langle\varphi_1,\varphi_2\rangle
\]	
for $\varphi_1,\varphi_2\in L^2(\R_+)$ and $s,t\in[0,T]$.
\end{thm}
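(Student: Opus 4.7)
The plan is to prove convergence of the finite-dimensional vector process $(W^{(n)}(\varphi_1,\cdot),\dots,W^{(n)}(\varphi_l,\cdot))$ via a martingale functional CLT, then identify the limit as a cylindrical Brownian motion through its covariance structure. The natural strategy is to first handle truncated linear combinations involving only finitely many basis functions $f_i$, and then use Lemma \ref{Cauchy} to pass to the full series.

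\textbf{Step 1: Truncation.} For each $m\in\N$ define
\[
W^{(n),m}(\varphi_j,t):=\sum_{k=1}^{\lfloor t/\Delta t^{(n)}\rfloor}\sum_{i\leq m}\delta W^{(n),i}_k\langle\varphi_j,f_i\rangle.
\]
By Lemma \ref{Cauchy} together with Doob's $L^2$-inequality applied to the martingale $W^{(n)}(\varphi_j,\cdot)-W^{(n),m}(\varphi_j,\cdot)$, for any $\eps>0$ one can pick $m$ large enough so that $\sup_n\E\sup_{t\leq T}|W^{(n)}(\varphi_j,t)-W^{(n),m}(\varphi_j,t)|^2<\eps$. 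Hence it suffices to prove the claim with $W^{(n)}$ replaced by $W^{(n),m}$ and then let $m\to\infty$.

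\textbf{Step 2: Finite-dimensional martingale FCLT.} Fix $m$ and consider the $\R^l$-valued martingale array with increments $\delta M_k^{(n)}:=(\sum_{i\leq m}\delta W_k^{(n),i}\langle\varphi_j,f_i\rangle)_{j=1,\dots,l}$. By Corollary \ref{vier} the conditional quadratic covariation is
\[
\sum_{k=1}^{\lfloor t/\Delta t^{(n)}\rfloor}\E\bigl[\delta M_k^{(n),j}\delta M_k^{(n),j'}\bigm|\F^{(n)}_{k-1}\bigr]
=\Delta t^{(n)}\lfloor t/\Delta t^{(n)}\rfloor\sum_{i\leq m}\langle\varphi_j,f_i\rangle\langle\varphi_{j'},f_i\rangle,
\]
which tends a.s.\ to $t\sum_{i\leq m}\langle\varphi_j,f_i\rangle\langle\varphi_{j'},f_i\rangle$. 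For the Lindeberg condition I shall establish a uniform bound on $|\delta W_k^{(n),i}|$. Invert (\ref{deltaW}) via the array $(\alpha^{(n)}_{ij})$ to write $\delta W_k^{(n),i}=\sum_{j\leq i}\alpha_{ij}^{(n)}(S^{(n)}_{k-1})Z_k^{(n),j}$ (on the event that all relevant $\sigma_l^{(n)}$ are positive; otherwise the Bernoulli fallback contributes a term of order $(\Delta t^{(n)})^{1/2}$). By the definition of $Z_k^{(n),j}$ and Remark \ref{q}, on $\{\sigma_j^{(n)}>0\}$,
\[
\frac{|\alpha_{ij}^{(n)}(S^{(n)}_{k-1})|}{\sigma_j^{(n)}(S^{(n)}_{k-1})}|\langle\delta\ol v_k^{(n)},f_j\rangle|\leq q_m|\langle\delta\ol v_k^{(n)},f_j\rangle|\leq q_m\cdot 2M\Delta v^{(n)},
\]
using that $|\langle\delta V_k^{(n)},f_j\rangle|\leq\Delta v^{(n)} M$ (because $|F_j^{(n)}|\leq 1$ and $|\omega_k^{(n)}|\leq M$) and the analogous bound for the compensator. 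Consequently $\sup_k|\delta W_k^{(n),i}|\leq C_m(\Delta v^{(n)}+(\Delta t^{(n)})^{1/2})\to 0$ for every $i\leq m$. This bound is uniform in $k$, so $\sup_k|\delta M_k^{(n)}|\to 0$ and the Lindeberg condition is trivially satisfied. The multivariate functional CLT for martingale difference arrays (cf.\ Theorem 7.1.4 in Ethier--Kurtz, the vector version of Theorem 18.2 in \cite{Billingsley}) then yields
\[
W^{(n),m}(\varphi_j,\cdot)_{j=1,\dots,l}\ \Longrightarrow\ W^{m}(\varphi_j,\cdot)_{j=1,\dots,l}\qquad\text{in }\D([0,T];\R^l),
\]
where $W^m$ is a centred Gaussian process with covariance $\E[W^m(\varphi_j,t)W^m(\varphi_{j'},s)]=(t\wedge s)\sum_{i\leq m}\langle\varphi_j,f_i\rangle\langle\varphi_{j'},f_i\rangle$.

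\textbf{Step 3: Letting $m\to\infty$ and identification.} Parseval gives $\sum_{i\leq m}\langle\varphi_j,f_i\rangle\langle\varphi_{j'},f_i\rangle\to\langle\varphi_j,\varphi_{j'}\rangle$ as $m\to\infty$, so $W^m\Rightarrow W$ in finite-dimensional distributions, where $W$ is the centred Gaussian process with covariance $(t\wedge s)\langle\varphi_j,\varphi_{j'}\rangle$. Combining the two tightness/convergence steps via the standard triangular interchange of limits (justified by the uniform $L^2$ estimate from Step 1) yields
\[
(W^{(n)}(\varphi_1,\cdot),\dots,W^{(n)}(\varphi_l,\cdot))\ \Longrightarrow\ (W(\varphi_1,\cdot),\dots,W(\varphi_l,\cdot)),
\]
and the covariance structure identifies $W$ as a cylindrical Brownian motion on $L^2(\R_+)$.

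\textbf{Main obstacle.} The delicate point is establishing the Lindeberg condition, because the increments $\delta W_k^{(n),i}$ are defined through an orthogonalisation procedure whose coefficients $\alpha^{(n)}_{ij}/\sigma^{(n)}_j$ are a priori singular in regions where $\sigma^{(n)}_j$ is small. This is precisely what Remark \ref{q} controls, but one must carefully check that the bound remains valid even on the event $\{\sigma^{(n)}_j(S^{(n)}_{k-1})=0\}$ where the Bernoulli fallback in (\ref{deltaW}) kicks in; fortunately the fallback is of order $(\Delta t^{(n)})^{1/2}\to 0$, so Lindeberg is recovered without difficulty.
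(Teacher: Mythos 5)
Your proof is correct, and it uses the same essential ingredients as the paper's: the exact conditional covariance identity from Corollary \ref{vier}, the bound on $|\alpha^{(n)}_{ij}|/\sigma^{(n)}_j$ from Remark \ref{q} combined with the a.s.\ bound $|\langle\delta\ol v_k^{(n)},f_j\rangle|\le 2M\Delta v^{(n)}$ to kill the Lindeberg condition, and Lemma \ref{Cauchy} to control the tail $\sum_{i>m}$. The architecture differs, though. The paper applies the martingale FCLT (Theorem 3.33 in \cite{JS}) \emph{once}, directly to the full processes $W^{(n)}(\varphi_j,\cdot)$, and absorbs the tail of the basis expansion inside the verification of the conditional Lindeberg condition (splitting $\varphi=\varphi^m+(\varphi-\varphi^m)$ there and using a conditional Cauchy--Schwarz/Markov manipulation). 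You instead truncate first, apply the FCLT to each fixed truncation level $m$ --- where Lindeberg is immediate from your uniform deterministic bound $\sup_k|\delta W^{(n),i}_k|\le C_m(\Delta v^{(n)}+(\Delta t^{(n)})^{1/2})\to0$ --- and then pass $m\to\infty$ via the standard three-step interchange of weak limits. This buys a cleaner Lindeberg check but requires the extra uniform-in-$n$ maximal estimate (Doob applied to the tail martingale) to legitimize the interchange; the paper avoids that estimate at the cost of a messier Lindeberg computation. Both routes are sound; your handling of the degenerate events ($\sigma^{(n)}_j=0$ or $c^{(n)}_{ii}=0$, where the Bernoulli fallback of order $(\Delta t^{(n)})^{1/2}$ enters) is in fact slightly more explicit than the paper's.
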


\begin{proof}
For any $\varphi\in L^2(\R_+)$ we define the approximating sequence
\[\varphi^m:=\sum_{i=1}^m\langle\varphi,f_i\rangle f_i.\] 
Take $\varphi_1,\dots,\varphi_l\in L^2(\R_+)$ for some $l\in\N$. We will show that $\left(W^{(n)}(\varphi_1,\cdot),\dots,W^{(n)}(\varphi_l,\cdot)\right)$ converges to a centered Gaussian process with covariance function $$\E\left[W(\varphi_i,t)W(\varphi_j,s)\right]=(t\wedge s)\langle\varphi_i,\varphi_j\rangle$$ for any $1\leq i,j\leq l$ and $s,t\in[0,T]$. To this end, first note that for all $n\in\N$ and for all $k\leq T_n$, 
\begin{eqnarray*}
	\E\left(\left.W^{(n)}\left(\varphi_i,t_k^{(n)}\right)\right|\F^{(n)}_{k-1}\right) &=& \lim_{m\ra\infty}\E\left(\left.W^{(n)}\left(\varphi_i^m,t_k^{(n)}\right)\right|\F^{(n)}_{k-1}\right) \\
	&=& \lim_{m\ra\infty}W^{(n)}\left(\varphi_i^m,t_{k-1}^{(n)}\right) \\
	&=& W^{(n)}\left(\varphi,t_{k-1}^{(n)}\right).
\end{eqnarray*}
Secondly, for all $n\in\N$ and $k_1,k_2\in\{1,\dots, T_n\}$ denoting 
\[\delta W^{(n)}\left(\varphi_i,t_k^{(n)}\right):=W^{(n)}\left(\varphi_i,t_k^{(n)}\right)-W^{(n)}\left(\varphi_i,t_{k-1}^{(n)}\right),\]
we have
\begin{eqnarray*}
\E\left(\left.\delta W^{(n)}\left(\varphi_i,t_k^{(n)}\right)\delta W^{(n)}\left(\varphi_j,t_{k}^{(n)}\right)\right|\F^{(n)}_{k-1}\right)&=&
\lim_{m\ra\infty}\E\left(\left.\sum_{g,h=1}^m\delta W_k^{(n),g}\langle\varphi_i,f_g\rangle\delta W_k^{(n),h}\langle\varphi_j,f_h\rangle\right|\F^{(n)}_{k-1}\right)\\
&=&\lim_{m\ra\infty}\Delta t^{(n)}\sum_{h=1}^m\langle\varphi_i,f_h\rangle\langle\varphi_j,f_h\rangle=\Delta t^{(n)}\langle\varphi_i,\varphi_j\rangle
\end{eqnarray*}
and therefore for all $1\leq i,j\leq l$ and $t\in[0,T]$,
\[\sum_{k=1}^{\lfloor t/\Delta t^{(n)}\rfloor}\E\left(\left.\delta W^{(n)}\left(\varphi_i,t_k^{(n)}\right)\delta W^{(n)}\left(\varphi_j,t_k^{(n)}\right)\right|\F^{(n)}_{k-1}\right)\ra t\langle\varphi_i,\varphi_j\rangle\quad\text{a.s.}\]
In order to apply the functional convergence theorem for martingale difference arrays it remains to check that the conditional Lindeberg condition is satisfied. For ease of notation we will assume that $l=2$ in the following, noting that the general case follows by similar arguments. 

Let us fix some $\eps>0$ and $t\in[0,T]$. We want to show that for any $\delta>0$ there exists an $n_0=n_0(\eps,\delta)$ such that for all $n\geq n_0$,
\[\sum_{k=1}^{\lfloor t/\Delta t^{(n)}\rfloor}\E\left(\left.\left[\delta W^{(n)}\left(\varphi_1,t_k^{(n)}\right)\right]^2\1_{\left\{\left[ \delta W^{(n)}\left(\varphi_1,t_k^{(n)}\right)\right]^2+\left[\delta W^{(n)}\left(\varphi_2,t_{k}^{(n)}\right)\right]^2>\eps\right\}}\right|\F^{(n)}_{k-1}\right)<\delta\quad\text{a.s.}\]
To this end we first apply Lemma \ref{Cauchy} and choose $m=m(\delta)$ such that for all $n\in\N$,
\[\sum_{k=1}^{\lfloor t/\Delta t^{(n)}\rfloor}\E\left(\left.\left[\delta W^{(n)}\left(\varphi_1-\varphi_1^m,t_k^{(n)}\right)\right]^2\right|\F^{(n)}_{k-1}\right)=
\sum_{k=1}^{\lfloor t/\Delta t^{(n)}\rfloor}\Delta t^{(n)}\sum_{i=m+1}^\infty \langle\varphi_1,f_i\rangle^2<\frac{\delta}{4}.\]
Hence,
\begin{eqnarray*}
\sum_{k=1}^{\lfloor t/\Delta t^{(n)}\rfloor}\E\left(\left.\left[\delta W^{(n)}\left(\varphi_1,t_k^{(n)}\right)\right]^2\1_{\left\{\left[ \delta W^{(n)}\left(\varphi_1,t_k^{(n)}\right)\right]^2+\left[\delta W^{(n)}\left(\varphi_2,t_{k}^{(n)}\right)\right]^2>\eps\right\}}\right|\F^{(n)}_{k-1}\right)\qquad\qquad\qquad\\ 
<\frac{\delta}{2}+2\sum_{k=1}^{\lfloor t/\Delta t^{(n)}\rfloor}\E\left(\left.\left[\sum_{i=1}^m\delta W^{(n),i}_k\langle\varphi_1,f_i\rangle\right]^2\1_{\left\{\left[ \delta W^{(n)}\left(\varphi_1,t_k^{(n)}\right)\right]^2+\left[\delta W^{(n)}\left(\varphi_2,t_{k}^{(n)}\right)\right]^2>\eps\right\}}\right|\F^{(n)}_{k-1}\right).
\end{eqnarray*}
According to Remark \ref{q} there exists an $n_m\in\N$ and a constant $q_m<\infty$ such that for all $n\geq n_m$,
\begin{eqnarray*}
\sum_{i=1}^{m}\left(\delta W^{(n),i}_k\right)^2&=& 
\sum_{i=1}^{m}\left[\1_{\left\{c_{ii}^{(n)}\left(S_{k-1}^{(n)}\right)>0\right\}}\left(\sum_{j\leq i}\alpha^{(n)}_{ij}\left(S_{k-1}^{(n)}\right)Z_k^{(n),j}\right)^2+\1_{\left\{c_{ii}^{(n)}\left(S_{k-1}^{(n)}\right)=0\right\}}\Delta t^{(n)}\left(U^{(n),i}_k\right)^2\right]\\
&\leq& 
\sum_{i=1}^{m}\left[\1_{\left\{c_{ii}^{(n)}\left(S_{k-1}^{(n)}\right)>0\right\}}2^i\sum_{j\leq i}\left(\alpha^{(n)}_{ij}\left(S_{k-1}^{(n)}\right)Z_k^{(n),j}\right)^2+\1_{\left\{c_{ii}^{(n)}\left(S_{k-1}^{(n)}\right)=0\right\}}\Delta t^{(n)}\right]\\
&=&\sum_{i=1}^{m}\left[\1_{\left\{c_{ii}^{(n)}\left(S_{k-1}^{(n)}\right)>0\right\}}2^i\sum_{j\leq i}\left(\frac{\alpha^{(n)}_{ij}\left(S_{k-1}^{(n)}\right)}{\sigma^{(n)}_j\left(S_{k-1}^{(n)}\right)}\right)^2\left\langle\delta\ol{v}^{(n)}_k,f_j\right\rangle^2+\1_{\left\{c_{ii}^{(n)}\left(S_{k-1}^{(n)}\right)=0\right\}}\Delta t^{(n)}\right]\\
&\leq&\sum_{i=1}^{m}\left[2^mq_m^2\left\Vert\delta\ol{v}_k^{(n)}\1_{[0,m]}\right\Vert_{L^2}^2+\Delta t^{(n)}\right]
\stackrel{(\ref{vbound})}{\leq} \Delta t^{(n)}\left[m^2q_m^22^{m+1}M^2+m\right]\leq d^m_n\quad a.s.
\end{eqnarray*}
with $(d^m_n)_{n\in\N}$ being a deterministic sequence satisfying $d_n^m\ra0$ as $n\ra\infty$. We choose
\[n_0=n_0(\delta,\eps)=n_0(m(\delta),\delta,\eps):=\min\left\{n\in\N:\ 8T\left\Vert\varphi_1\right\Vert^2_{L^2}d_n^m\left(\left\Vert\varphi_1\right\Vert^2_{L^2}+\left\Vert\varphi_2\right\Vert^2_{L^2}\right)<\delta\eps\right\}.\]
Then for all $n\geq n_m$ by the Cauchy-Schwarz inequality,
\begin{eqnarray*}
&&\E\left(\left.\left[\sum_{i=1}^m\delta W^{(n),i}_k\langle\varphi_1,f_i\rangle\right]^2\1_{\left\{\left[ \delta W^{(n)}\left(\varphi_1,t_k^{(n)}\right)\right]^2+\left[\delta W^{(n)}\left(\varphi_2,t_{k}^{(n)}\right)\right]^2>\eps\right\}}\right|\F^{(n)}_{k-1}\right)\\
&\leq & \left\Vert\varphi_1\right\Vert^2_{L^2}\cdot
\E\left(\left.\sum_{i=1}^m\left(\delta W^{(n),i}_k\right)^2\left(\1_{\left\{\left[ \delta W^{(n)}\left(\varphi_1,t_k^{(n)}\right)\right]^2>\frac{\eps}{2}\right\}}+\1_{\left\{\left[\delta W^{(n)}\left(\varphi_2,t_{k}^{(n)}\right)\right]^2>\frac{\eps}{2}\right\}}\right)\right|\F^{(n)}_{k-1}\right)\\
&\leq & \frac{2\left\Vert\varphi_1\right\Vert^2_{L^2}d_n^m}{\eps}\cdot
\E\left(\left.\left[ \delta W^{(n)}\left(\varphi_1,t_k^{(n)}\right)\right]^2+\left[\delta W^{(n)}\left(\varphi_2,t_{k}^{(n)}\right)\right]^2\right|\F^{(n)}_{k-1}\right)\\
&=& \frac{2\left\Vert\varphi_1\right\Vert^2_{L^2}d_n^m}{\eps}\Delta t^{(n)}\left(\left\Vert\varphi_1\right\Vert^2_{L^2}+\left\Vert\varphi_2\right\Vert^2_{L^2}\right)<\frac{\delta\Delta t^{(n)}}{4T}\quad\text{a.s.}
\end{eqnarray*}
Hence, the conditional Lindeberg condition is satisfied and the functional central limit theorem for martingale difference arrays  (cf.~Theorem 3.33 in \cite{JS}) implies that
\[\left(W^{(n)}(\varphi_1,\cdot),\dots,W^{(n)}(\varphi_l,\cdot)\right)\RA\left(W(\varphi_1,\cdot),\dots,W(\varphi_l,\cdot)\right)\quad\text{in }\mathcal{D}([0,T];\R^l),\]
where $\left(W(\varphi_1,\cdot),\dots,W(\varphi_l,\cdot)\right)$ is a centered Gaussian process with covariance function \[\E\left[W(\varphi_i,t)W(\varphi_j,s)\right]=(t\wedge s)\langle\varphi_i,\varphi_j\rangle\] 
for any $1\leq i,j\leq l$ and $s,t\in[0,T]$.
\end{proof}

\begin{rem}
The process $W$ is not only an $L^2(\R_+)^\#$-semimartingale in the sense of \cite{KP2}, but can also be understood as a martingale random measure: If $\mathcal{A}:=\left\{A\subset\mathcal{B}(\R_+):\ A\ \text{bounded}\right\}$, we can define for any $A\in\mathcal{A}$ and $t\in [0,T]$, $M(A,t):=W(\1_A,t)$. Then $M$ is indeed a Gaussian martingale random measure indexed by $\mathcal{A}\times[0,T]$.
\end{rem}

\s{The state dynamics as an infinite dimensional SDE}\label{integrals}

In this section we show that the dynamics of $S^{(n)}$ can be written as an infinite dimensional SDE and prove the convergence of the integrands and integrators. Our concept of integration follows  \cite{KP2}, to which we refer for any  unknown terminology used in the following.

For each $n\in\N$ we define the $E_{loc}$-valued stochastic process $\left(S^{(n)}(t)\right)_{t\in[0,T]}$ as the piecewise constant interpolation of the $\left(S^{(n)}_k\right)_{k=0,\dots,T_n}$, i.e.
\[S^{(n)}(t):=S^{(n)}_k,\quad\text{if }t\in\left[t_{k}^{(n)},t_{k+1}^{(n)}\right).\]
Similarly, we set
\[B^{(n)}(t):=B^{(n)}_k,\qquad V^{(n)}(t,x):=V^{(n)}_k(x),\quad\text{if}\quad t_k^{(n)}\leq t<t^{(n)}_{k+1},\quad x\in\R_+.\]

In view of the equations (\ref{SDE_B}) and (\ref{SDE_V}) we have that
\begin{equation}\label{SDE_BV}
\begin{split}
	B^{(n)}(t) &= B_0^{(n)}+\sum_{k=1}^{\lfloor t/\Delta t^{(n)}\rfloor} \left[p^{(n)}\left(S^{(n)}_{k-1}\right)\Delta t^{(n)}+r^{(n)}\left(S^{(n)}_{k-1}\right)\delta Z^{(n)}_k\right] \\
	V^{(n)}(t,x) &=V^{(n)}_0(x)+\sum_if_i(x)\sum_{k=1}^{\lfloor t/\Delta t^{(n)}\rfloor}\left[\mu_i^{(n)}\left(S^{(n)}_{k-1}\right)\Delta t^{(n)}+\sigma_i^{(n)}\left(S_{k-1}^{(n)}\right)\sum_{j\leq i}c^{(n)}_{ij}\left(S_{k-1}^{(n)}\right)\delta W_k^{(n),j}\right].
\end{split}
\end{equation}

In terms of the processes $Z^{(n)}$ and $W^{(n)}$ introduced in (\ref{Zn}) and (\ref{Wn}), respectively, we can define a sequence of $L^2(\R_+)^\#$-semimartingales  $Y^{(n)}$ by putting, for any $n\in\N$, $t\in[0,T]$, and $\varphi\in L^2(\R_+)$, 
\[
	Y^{(n)}(\varphi,t):=\left(Z_k^{(n)},W^{(n)}(\varphi,t),t_k^{(n)}\right),\quad\text{if } t\in\left[t_k^{(n)},t_{k+1}^{(n)}\right).
\]
The stochastic integral with respect to $Y^{(n)}$ is introduced in Appendix \ref{appendix-integration}. If we define, for any $n \in \N$, the coefficient functions $G^{(n)}: E_{loc}\ra\hat{E}_{loc}$ (see Appendix \ref{appendix-integration} for the definition of the space $\hat{E}_{loc}$) via
\[G^{(n)}:=\left(G^{(n),1},\ 0,\ G^{(n),3},\ 0,\ G^{(n),5},\ G^{(n),6}\right)\]
with
\begin{align*}
&G^{(n),1}(s):=r^{(n)}\left(s \right),&G^{(n),5}(s;x,y)&:=\sum_{i}\sum_{j\leq i}d^{(n)}_{ij}\left(s \right)f_i(x)f_j(y),\\
&G^{(n),3}(s):=p^{(n)}\left(s \right),&G^{(n),6}(s;x)&:=\sum_i\mu^{(n)}_i\left(s \right)f_i(x)=\mu^{(n)}\left(s ;x\right),
\end{align*}
then the general integration theory guarantees that the integral 
\[
	\int_0^t G^{(n)}\left(S^{(n)}(u-)\right)dY^{(n)}(u),\quad t\in[0,T],
\]
is well-defined as an $E_{loc}$-valued stochastic process, and (\ref{SDE_BV}) yields the following representation of the state process:
\begin{equation}\label{representation}
	S^{(n)}(t)=S^{(n)}_0+\int_0^t G^{(n)}\left(S^{(n)}(u-)\right)dY^{(n)}(u),\quad t\in[0,T].
\end{equation}
In the next subsection we are going to prove the convergence of the integrators and integrands. 

\subs{Convergence of the integrator and integrand}

The following theorem shows that the sequence $Y^{(n)}$ converges to the $L^2(\R_+)^\#$-semimartingale 
\begin{equation}\label{Ydef}
Y(\varphi,t):=\left(Z(t),W(\varphi,t),t\right),\quad \varphi\in L^2(\R_+),\quad t\in[0,T],
\end{equation}
where $W$ is a cylindrical Brownian motion on $L^2(\R_+)$, and $Z$ is an independent standard Brownian motion. 

\begin{thm}\label{Y}
Let Assumptions \ref{prob1}, \ref{scaling}, \ref{density}, \ref{gi}, \ref{hi}, and \ref{U} be satisfied. Then, for every $k\in\N$ and $\varphi_1,\dots,\varphi_k\in L^2(\R_+)$,
\[\left(Y^{(n)}(\varphi_1,\cdot),\dots,Y^{(n)}(\varphi_k,\cdot)\right)\RA\left(Y(\varphi_1,\cdot),\dots,Y(\varphi_k,\cdot)\right)\]
in $\mathcal{D}\left([0,T];\R^{3k}\right)$, where $Y$ is defined in (\ref{Ydef}). 
\end{thm}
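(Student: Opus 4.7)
The plan is to apply the multivariate functional CLT for martingale difference arrays (Theorem~3.33 in \cite{JS}) to the joint $(k{+}1)$-dimensional martingale $(Z^{(n)}(\cdot),\,W^{(n)}(\varphi_1,\cdot),\,\dots,\,W^{(n)}(\varphi_k,\cdot))$ and to identify its limit as a centered continuous Gaussian martingale whose covariance matrix at time $t$ is the block-diagonal matrix $t\cdot\mathrm{diag}\bigl(1,\,(\langle\varphi_i,\varphi_j\rangle)_{i,j}\bigr)$. Since the third coordinate of $Y^{(n)}$ equals the deterministic quantity $t_k^{(n)}$ on $[t_k^{(n)},t_{k+1}^{(n)})$ and converges uniformly on $[0,T]$ to $t$, convergence of the $3k$-dimensional vector $Y^{(n)}\RA Y$ in $\mathcal{D}([0,T];\R^{3k})$ follows at once from the joint convergence of the above martingale. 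The diagonal entries of the limit covariance and the marginal Lindeberg conditions are already established in Theorems~\ref{Z} and~\ref{spacetime}; what is genuinely new is (i)~vanishing of the cross-covariations between $Z^{(n)}$ and the $W^{(n)}(\varphi_j)$'s (which produces independence of $Z$ and $W$ in the limit), and (ii)~the joint Lindeberg condition.

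The key observation for~(i) is that, event by event, price and volume never move simultaneously: $\delta V^{(n)}_\ell=0$ on $\{\phi^{(n)}_\ell\in\{A,B\}\}$ and $\delta B^{(n)}_\ell=0$ on $\{\phi^{(n)}_\ell=C\}$, so $\delta B^{(n)}_\ell\,\langle\delta V^{(n)}_\ell,f_i\rangle\equiv 0$, and consequently
\[
\E\bigl[\delta\bar B^{(n)}_\ell\,\langle\delta\bar v^{(n)}_\ell,f_i\rangle\,\bigm|\,\F^{(n)}_{\ell-1}\bigr]
=-\delta\hat B^{(n)}_\ell\,\langle\delta\hat v^{(n)}_\ell,f_i\rangle
=-(\Delta t^{(n)})^2 p^{(n)}(S^{(n)}_{\ell-1})\,\mu^{(n)}_i(S^{(n)}_{\ell-1}).
\]
Writing $\delta W^{(n)}(\varphi_j^m,t_\ell^{(n)})$, with $\varphi_j^m:=\sum_{i\leq m}\langle\varphi_j,f_i\rangle f_i$, as a linear combination of the $\langle\delta\bar v^{(n)}_\ell,f_i\rangle$ via the inverse array $\alpha^{(n)}_{li}/\sigma^{(n)}_i$ supplied by Corollary~\ref{vier} (the Bernoulli terms drop out by the independence in Assumption~\ref{U}), and combining Remark~\ref{q}, the pointwise estimate $\Delta t^{(n)}|p^{(n)}|\leq\Delta x^{(n)}$ from~\eqref{pbound}, and the crucial pointwise lower bound $r^{(n)}(s)\geq\sqrt{\Delta x^{(n)}\eta/2}$ --- which holds because $r^{(n)}(s)<\eta/2$ forces $p^{(n)}(s)>\eta/2$ by~\eqref{pricenonconstant}, and then~\eqref{inequality} gives $(r^{(n)}(s))^2\geq\Delta x^{(n)} p^{(n)}(s)\geq \Delta x^{(n)}\eta/2$ --- one obtains
\[
\sum_{\ell=1}^{\lfloor t/\Delta t^{(n)}\rfloor}\Bigl|\E\bigl[\delta Z^{(n)}_\ell\,\delta W^{(n)}(\varphi_j^m,t_\ell^{(n)})\,\bigm|\,\F^{(n)}_{\ell-1}\bigr]\Bigr|=O\!\bigl(\sqrt{\Delta x^{(n)}}\bigr)\xrightarrow{\,n\to\infty\,}0.
\]
The tail $\varphi_j-\varphi_j^m$ is absorbed by conditional Cauchy--Schwarz together with Lemma~\ref{Cauchy}, which provides a bound uniform in $n$ that tends to $0$ as $m\to\infty$.

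Step~(ii) reduces to the marginal Lindeberg conditions already verified in the proofs of Theorems~\ref{Z} and~\ref{spacetime}: after truncating $\varphi_j$ to $\varphi_j^m$ one has the deterministic pointwise bounds $|\delta Z^{(n)}_\ell|\leq c_n^{1/2}$ from~\eqref{Zbound} and $|\delta W^{(n)}(\varphi_j^m, t_\ell^{(n)})|\leq\|\varphi_j\|_{L^2}(d_n^m)^{1/2}$ (via $\sum_{i\leq m}(\delta W^{(n),i}_\ell)^2\leq d_n^m$ from the proof of Theorem~\ref{spacetime}), so the joint increment is $o(1)$ deterministically, and the tail is handled by Lemma~\ref{Cauchy} exactly as in that proof. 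The main obstacle is Step~(i): a priori the potential degeneracy of $r^{(n)}$ and $\sigma^{(n)}_i$ could cause the cross-covariations to blow up, and it is precisely the non-degeneracy Assumption~\eqref{pricenonconstant} together with the uniform control of $\alpha^{(n)}_{ij}/\sigma^{(n)}_j$ in Remark~\ref{q} that rules this out and secures asymptotic independence of $Z$ and $W$.
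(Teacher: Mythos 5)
Your proposal is correct and follows essentially the same route as the paper: reduce the joint convergence to (i) vanishing of the conditional cross-covariations between $\delta Z^{(n)}_k$ and $\delta W^{(n)}(\varphi_j,t_k^{(n)})$ (using that price and volume never move in the same event) and (ii) a joint Lindeberg condition, handling the tail $\varphi_j-\varphi_j^m$ via Lemma \ref{Cauchy} and the truncated part via Remark \ref{q} together with the deterministic bounds $c_n$ and $d_n^m$. The only substantive difference is that you retain the normalising factor $1/r^{(n)}$ in the cross-covariation and compensate with the lower bound $r^{(n)}\geq\sqrt{\Delta x^{(n)}\eta/2}$, arriving at $O\bigl(\sqrt{\Delta x^{(n)}}\bigr)$ where the paper's displayed computation gives $O\bigl(\Delta x^{(n)}\bigr)$ --- both suffice, and your bookkeeping is if anything the more careful of the two.
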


\begin{proof}
The joint convergence follows directly from Theorems \ref{Z} and \ref{spacetime} because the processes $Z^{(n)},\ n\in \N,$ and $W^{(n)}(\varphi,\cdot),\ n\in \N,$ are C-tight for any $\varphi\in L^2(\R)$. 
However, to derive the joint finite dimensional distributions (and especially to check the independence of the resulting cylindrical and standard Brownian motion), we have to show two more things: first, we will prove that for all $t\in[0,T]$ and $\varphi\in L^2(\R_+)$,
\[\sum_{k=1}^{\lfloor t/\Delta t^{(n)}\rfloor}\E\left(\left.\delta W^{(n)}\left(\varphi,t_k^{(n)}\right)\delta Z_k^{(n)}\right|\F_{k-1}^{(n)}\right)\ra0\quad\text{a.s.}\]
 and second, we will show that for all $\eps>0$, $t\in[0,T]$, and $\varphi\in L^2(\R_+)$, 
\[\sum_{k=1}^{\lfloor t/\Delta t^{(n)}\rfloor}\E\left(\left.\left(\left[\delta W^{(n)}\left(\varphi,t_k^{(n)}\right)\right]^2+\left[\delta Z_k^{(n)}\right]^2\right)\1_{\left\{\left[\delta W^{(n)}\left(\varphi,t_k^{(n)}\right)\right]^2+\left[\delta Z_k^{(n)}\right]^2>\eps\right\}}\right|\F_{k-1}^{(n)}\right)\ra0\quad\text{a.s.}\]
To this end, observe that for any $n,i\in\N$ and $k\leq T_n$,
\[\E\left(\left.\left\langle \delta \ol{v}_k^{(n)},f_i\right\rangle \delta Z_k^{(n)}\right| \F_{k-1}^{(n)}\right)=-\left(\Delta t^{(n)}\right)^2\mu^{(n)}_i\left(S_{k-1}^{(n)}\right)p^{(n)}\left(S_{k-1}^{(n)}\right).\]
Let $\delta>0$. We choose $m=m(\delta)$ such that for all $n\in\N$ and $t\in[0,T]$,
\begin{eqnarray*}
&&\sum_{k=1}^{\lfloor t/\Delta t^{(n)}\rfloor}\left|\E\left(\left.\sum_{i=m+1}^\infty\langle\varphi,f_i \rangle\delta W_k^{(n),i}\delta Z_k^{(n)}\right|\F_{k-1}^{(n)}\right)\right|\\
&\leq& \sum_{k=1}^{\lfloor t/\Delta t^{(n)}\rfloor}\left(\E\left[\left.\left(\delta Z^{(n)}_k\right)^2\right|\F^{(n)}_{k-1}\right]\E\left[\left.\left(\sum_{i=m+1}^\infty\langle\varphi,f_i\rangle\delta W^{(n),i}_k\right)^2\right|\F_{k-1}^{(n)}\right]\right)^{1/2}\\
&\leq& \sum_{k=1}^{\lfloor t/\Delta t^{(n)}\rfloor}\Delta t^{(n)}\left(\sum_{i=m+1}^\infty\langle\varphi,f_i\rangle^2\right)^{1/2}<\frac{\delta}{2}\quad\text{a.s.}
\end{eqnarray*}
Moreover for large enough $n$ and all $k\leq T_n$,
\begin{eqnarray*}
\E\left[\left.\sum_{i=1}^m\langle \varphi,f_i\rangle \delta W^{(n),i}_k\delta Z_k^{(n)}\right|\F^{(n)}_{k-1}\right]
&=&\sum_{i=1}^m\E\left[\left.\langle \varphi,f_i\rangle \sum_{j\leq i}\frac{\alpha_{ij}^{(n)}\left(S_{k-1}^{(n)}\right)}{\sigma^{(n)}_j\left(S_{k-1}^{(n)}\right)}\left\langle \delta \ol{v}^{(n)}_k,f_j\right\rangle\delta Z_k^{(n)}\right|\F^{(n)}_{k-1}\right]\\
&=&
-\left(\Delta t^{(n)}\right)^2p^{(n)}\left(S_{k-1}^{(n)}\right)\sum_{i=1}^m\langle \varphi,f_i\rangle\sum_{j\leq i}\frac{\alpha_{ij}^{(n)}\left(S_{k-1}^{(n)}\right)}{\sigma^{(n)}_j\left(S_{k-1}^{(n)}\right)} \mu^{(n)}_j\left(S_{k-1}^{(n)}\right).
\end{eqnarray*}
According to Lemma \ref{mu} and Remark \ref{q} there exist an $n_0=n_0(m)$ and a constant $C_m<\infty$ such that for all $n\geq n_0$,
\[\sup_{s\in E_{loc}}\left|\sum_{i=1}^m\langle \varphi,f_i\rangle \sum_{j\leq i}\mu^{(n)}_j\left(s\right)\frac{\alpha_{ij}^{(n)}(s)}{\sigma^{(n)}_j\left(s\right)}\right|
\leq C_m\sum_{i=1}^m\left|\langle \varphi,f_i\rangle\right|\leq mC_m\left\Vert\varphi\right\Vert_{L^2}<\infty.\]   
Hence for all $n\geq n_0$,
\begin{eqnarray*}
\sum_{k=1}^{\lfloor t/\Delta t^{(n)}\rfloor}\left|\E\left(\left.\sum_{i=1}^m\langle \varphi,f_i\rangle \delta W^{(n),i}_kZ_k^{(n)}\right|\F^{(n)}_{k-1}\right)\right|&\leq&\sum_{k=1}^{\lfloor t/\Delta t^{(n)}\rfloor}\left(\Delta t^{(n)}\right)^2\left|p^{(n)}\left(S_{k-1}^{(n)}\right)\right|mC_m\left\Vert\varphi\right\Vert_{L^2}\\
&\stackrel{(\ref{pbound})}{\leq}& T\Delta x^{(n)}mC_m\left\Vert\varphi\right\Vert_{L^2} <\frac{\delta}{2}\quad\text{a.s.}
\end{eqnarray*}
This proves that for any $\delta>0$ there exists $n_0=n_0(\delta)$ such that for all $n\geq n_0$,
\[\left|\sum_{k=1}^{\lfloor t/\Delta t^{(n)}\rfloor}\E\left(\left.\delta W^{(n)}\left(\varphi,t_k^{(n)}\right)\delta Z_k^{(n)}\right|\F_{k-1}^{(n)}\right)\right|<\delta \quad\text{a.s.}\]
Next, using the estimate in equation (\ref{Zbound}) we have almost surely
\begin{eqnarray*}
&&\sum_{k=1}^{\lfloor t/\Delta t^{(n)}\rfloor}\E\left(\left.\left[\delta Z_k^{(n)}\right]^2\1_{\left\{\left[\delta W^{(n)}\left(\varphi,t_k^{(n)}\right)\right]^2+\left[\delta Z_k^{(n)}\right]^2>\eps\right\}}\right|\F_{k-1}^{(n)}\right)\\
&\leq& c_n\sum_{k=1}^{\lfloor t/\Delta t^{(n)}\rfloor}\p\left(\left.\left[\delta W^{(n)}\left(\varphi,t_k^{(n)}\right)\right]^2>\frac{\eps}{2}\right|\F_{k-1}^{(n)}\right)+\p\left(\left.\left[\delta Z_k^{(n)}\right]^2>\frac{\eps}{2}\right|\F_{k-1}^{(n)}\right)\\
&\leq& \frac{2c_n}{\eps} \sum_{k=1}^{\lfloor t/\Delta t^{(n)}\rfloor}\E\left(\left.\left[\delta W^{(n)}\left(\varphi,t_k^{(n)}\right)\right]^2+\left[\delta Z_k^{(n)}\right]^2\right|\F_{k-1}^{(n)}\right)\leq \frac{2c_n}{\eps} \sum_{k=1}^{\lfloor t/\Delta t^{(n)}\rfloor}\Delta t^{(n)}\left(\left\Vert\varphi\right\Vert_{L^2}^2+1\right)\ra0.
\end{eqnarray*}
Furthermore,
\begin{eqnarray*}
&&\E\left(\left.\left[\delta W^{(n)}\left(\varphi,t^{(n)}_k\right)\right]^2\1_{\left\{\left[\delta W^{(n)}\left(\varphi,t_k^{(n)}\right)\right]^2+\left[\delta Z_k^{(n)}\right]^2>\eps\right\}}\right|\F_{k-1}^{(n)}\right)\\
&\leq & 2\cdot\E\left(\left.\left[\sum_{i=m+1}^\infty\delta W^{(n),i}_k\langle\varphi,f_i\rangle\right]^2+\left[\sum_{i=1}^m\delta W^{(n),i}_k\langle\varphi,f_i\rangle\right]^2\1_{\left\{\left[\delta W^{(n)}\left(\varphi,t_k^{(n)}\right)\right]^2+\left[\delta Z_k^{(n)}\right]^2>\eps\right\}}\right|\F_{k-1}^{(n)}\right)\\
&\leq & 2\Delta t^{(n)}\sum_{i=m+1}^\infty\langle\varphi,f_i\rangle^2+2\left\Vert\varphi\right\Vert_{L^2}^2\E\left(\left.\left[\sum_{i=1}^m\delta W^{(n),i}_k\right]^2\1_{\left\{\left[\delta W^{(n)}\left(\varphi,t_k^{(n)}\right)\right]^2+\left[\delta Z_k^{(n)}\right]^2>\eps\right\}}\right|\F_{k-1}^{(n)}\right)
\end{eqnarray*}
and by a similar reasoning as above
\[\E\left(\left.\left[\sum_{i=1}^m\delta W^{(n),i}_k\right]^2\1_{\left\{\left[\delta W^{(n)}\left(\varphi,t_k^{(n)}\right)\right]^2+\left[\delta Z_k^{(n)}\right]^2>\eps\right\}}\right|\F_{k-1}^{(n)}\right)
\leq \frac{2d_n^m}{\eps}\Delta t^{(n)}\left(\left\Vert\varphi\right\Vert^2_{L^2}+1\right).\]
Now for any $\delta >0$ we choose $m=m(\delta)$ and $n_0=n_0(m,\delta,\eps)=n_0(\delta,\eps)$ such that for all $n\geq n_0$,
\[\sum_{i=m+1}^\infty\langle\varphi,f_i\rangle^2<\frac{\delta}{4T}\qquad\text{and}\qquad\frac{2d_n^m}{\eps}\left\Vert\varphi\right\Vert_{L^2}^2\left(\left\Vert\varphi\right\Vert^2_{L^2}+1\right)<\frac{\delta}{4T}\]
and therefore
\[\sum_{k=1}^{\lfloor t/\Delta t^{(n)}\rfloor}\E\left(\left.\left[\delta W^{(n)}\left(\varphi,t^{(n)}_k\right)\right]^2\1_{\left\{\left[\delta W^{(n)}\left(\varphi,t_k^{(n)}\right)\right]^2+\left[\delta Z_k^{(n)}\right]^2>\eps\right\}}\right|\F_{k-1}^{(n)}\right)<\delta\quad\text{a.s.}\]
\end{proof}

Let us now turn to the integrands. The results of Section \ref{volume} suggest that the coefficient functions $G^{(n)}$ converge in a local sense  to 
\[G=\left(G^{1},\ 0,\ G^{3},\ 0,\ G^{5},\ G^{6}\right): E_{loc}\ra\hat{E}_{loc}\]
with
\begin{align*}
&G^{1}(s):=r\left(s\right),&G^{5}(s;x,y)&:=\sum_i\sum_{j\leq i}d_{ij}\left(s \right)f_i(x)f_j(y),\\
&G^{3}(s):=p\left(s\right),&G^{6}(s;x)&:=\sum_i\mu_i\left(s\right)f_i(x)=\mu\left(s ;x\right).
\end{align*}

In order to formulate the convergence result we define for every $m\in\N$ the projections of $G^5$ and $G^6$ on $[0,m]$ as
\[G^{5,m}(s;x,y):=\sum_{i\in\mathcal{I}_m}\sum_{j\leq i}d_{ij}\left(s\right)f_i(x)f_j(y),\qquad G^{6,m}(s;x):=\sum_{i\in\mathcal{I}_m}\mu_i\left(s\right)f_i(x)=\mu\left(s;x\right),
\]
and set 
\[G^m(s):=\left(G^{1}(s),\ 0,\ G^{3}(s),\ 0,\ G^{5,m}(s),\ G^{6,m}(s)\right),\quad  s\in E_{loc}.\]
Moreover, for all $m\in\N$ we define the space
\[E_{m}:=\left\{s=\left(b,v\1_{[0,m]}\right):\ (b,v)\in E_{loc}\right\}\subset E_{loc}.
\] 
Next, we approximate $G^{(n)}$ by 
functions $G^{(n)}_{m}: E_m\ra\hat{E},\ m\in\N$, given by
\[{G}^{(n)}_{m}:=\left(G^{(n),1}_m,\ 0,\ G^{(n),3}_m,\ 0,\ {G}^{(n),5}_{m},\ {G}^{(n),6}_{m}\right),\]
where for all $s\in E_m\subset E_{loc}$ and $s_m := (s \wedge m, v)$,
\begin{align*}
&G^{(n),1}_m(s):=p^{(n)}\left(s_m\right),& G^{(n),5}_{m}(s;x,y)&:=\sum_{i\in\mathcal{I}_{m}}\sum_{j\leq i}d^{(n)}_{ij}\left(s\right)f_i(x)f_j(y),\\
&G^{(n),3}_m(s):=r^{(n)}\left(s_m\right),& G^{(n),6}_{m}(s;x,y)&:=\sum_{i\in\mathcal{I}_{m}}\mu_i^{(n)}\left(s\right)f_i(x).
\end{align*}

Analogously, we define for each $m\in\N$ a function ${G}_m: E_m\ra\hat{E}$ via a similar modification of $G$, i.e.~we have
\[G_m(s):=G^m\left(s_m\right),\quad s\in E_m.\]
We note that for $s=(b,v)\in E_m$ with $b\leq m$, $G_m(s)=G^m(s)$, due to Assumptions \ref{prob22}, \ref{gii}, and \ref{hii}. 

\begin{thm}\label{Fconv}
Let Assumptions \ref{scaling}, \ref{prob2}, \ref{density}, \ref{g}, and \ref{h} hold. Then for any $m\in\N$,
\[\sup_{s\in E_{m}}\left\Vert G^{(n)}_{m}(s)-G_{m}(s)\right\Vert_{\hat{E}}\ra0.\]
\end{thm}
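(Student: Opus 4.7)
The plan is to decompose $\|G^{(n)}_m(s) - G_m(s)\|_{\hat{E}}$ into its four nonzero components and handle each separately using the already established convergence results. The natural norm on $\hat{E}$ decomposes as $|\cdot|$ on the scalar entries and a Parseval-type $L^2$-norm on $[0,m]$ (respectively $[0,m]^2$) for the kernel entries, so it suffices to control each component uniformly in $s\in E_m$.

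For the scalar entries $G^{(n),1}_m$ and $G^{(n),3}_m$, I evaluate at the truncated point $s_m=(b\wedge m,v)$, which lies in a region on which Assumption \ref{prob21} yields uniform convergence; this gives
\[
\sup_{s\in E_m}\left|G^{(n),1}_m(s)-G^{1}_m(s)\right|+\sup_{s\in E_m}\left|G^{(n),3}_m(s)-G^{3}_m(s)\right|\ra 0.
\]
For the drift kernel, since $(f_i)_{i\in\mathcal{I}_m}$ is an orthonormal basis of $L^2([0,m])$, Parseval's identity gives
\[
\left\|G^{(n),6}_m(s;\cdot)-G^{6}_m(s;\cdot)\right\|_{L^2([0,m])}^{2}=\sum_{i\in\mathcal{I}_m}\left(\mu^{(n)}_i(s)-\mu_i(s)\right)^2=\left\|\left(\mu^{(n)}(s)-\mu(s)\right)\1_{[0,m]}\right\|_{L^2}^{2},
\]
and Lemma \ref{mu} makes the right-hand side tend to zero uniformly in $s\in E_{loc}\supset E_m$.

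For the volatility kernel, the same orthonormality reduces the $L^2$-norm of $G^{(n),5}_m(s;\cdot,\cdot)-G^{5}_m(s;\cdot,\cdot)$ on $[0,m]\times[0,m]$ to
\[
\sum_{i\in\mathcal{I}_m}\sum_{j\leq i}\left(d^{(n)}_{ij}(s)-d_{ij}(s)\right)^2,
\]
which is exactly the quantity shown to converge to zero uniformly in $s\in E_{loc}$ by Lemma \ref{sumc}. (For indices $j\leq i$ with $j\notin\mathcal{I}_m$, the factor $f_j$ vanishes on $[0,m]$, so it suffices to bound the full sum as above.) Adding the four uniform estimates yields the claimed convergence.

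The routine part of the argument is thus essentially an assembly of Lemmas \ref{mu} and \ref{sumc} with Assumption \ref{prob21}; the only genuine point requiring care is the identification of the abstract norm on $\hat{E}$ coming from the integration theory of \cite{KP2} with the Parseval expressions above. Once this identification is performed on the truncated slice $E_m$, no component requires more than the Haar orthonormality and the uniform convergence results of Section \ref{volume}, and the four contributions combine additively to give $\sup_{s\in E_m}\|G^{(n)}_m(s)-G_m(s)\|_{\hat{E}}\ra 0$.
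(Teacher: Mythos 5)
Your proof is correct and follows essentially the same route as the paper: the $\hat{E}$-norm is split into the two scalar components (handled by Assumption \ref{prob21} at $s_m$) and the two kernel components, which by orthonormality of the Haar system reduce to $\sum_{i\in\mathcal{I}_m}(\mu_i^{(n)}(s)-\mu_i(s))^2$ and $\sum_{i\in\mathcal{I}_m}\sum_{j\leq i}(d_{ij}^{(n)}(s)-d_{ij}(s))^2$, controlled uniformly by Lemmata \ref{mu} and \ref{sumc}. The paper's own proof is just this one-line decomposition with the same three references, so your write-up is simply a more explicit version of it.
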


\begin{proof}
By Assumption \ref{prob2}, Lemma \ref{mu}, and Lemma \ref{sumc} we have for all \mbox{$s=(b,v)\in E_{m}$,}
\begin{align*}
&\left\Vert G^{(n)}_{m}(s)-G_{m}(s)\right\Vert_{\hat{E}}=\left|r^{(n)}\left(s_m\right)-r\left(s_m\right)\right|+\left|p^{(n)}\left(s_m\right)-p\left(s_m\right)\right|\\&\qquad\qquad\qquad\qquad\qquad+\left(\sum_{i\in\mathcal{I}_{m}}\left(\mu_i^{(n)}(s)-\mu_i\left(s\right)\right)^2\right)^{1/2}+\left(\sum_{i\in\mathcal{I}_{m}}\sum_{j\leq i}\left(d^{(n)}_{ij}\left(s\right)-d_{ij}\left(s\right)\right)^2\right)^{1/2}.
\end{align*}
\end{proof}

\subsection{Compactness of the integrands}

In this section it is shown that for each $m\in\N$ the $G^{(n)}_m,\ n\in\N,$ satisfy a uniform compactness condition from which we shall later deduce relative compactness of the price-volume process and hence the existence of accumulation points. 

\begin{thm}\label{compact3}
Given Assumptions \ref{scaling}, \ref{density}, and \ref{hi}, there exists for every $m\in\N$ a compact set $K_{m}\subset \hat{E}$ such that for all $n\in\N$ and $s\in E_{m}$,
\[G^{(n)}_{m}(s)\in K_{m}.\]
\end{thm}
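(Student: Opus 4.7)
My plan is to construct $K_m$ as a Cartesian product $K_m = K_m^1 \times \{0\} \times K_m^3 \times \{0\} \times K_m^5 \times K_m^6$ of compact subsets of each coordinate of $\hat{E}$; as a finite product of compacts, $K_m$ is then compact, and it suffices to handle each coordinate separately.

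For the scalar coordinates $G_m^{(n),1}(s)=r^{(n)}(s_m)$ and $G_m^{(n),3}(s)=p^{(n)}(s_m)$ only uniform boundedness is needed. Since the first coordinate of $s_m$ is bounded by $m$, the linear growth in Assumption \ref{prob21} (which I assume is in force, since without some control of $p^{(n)},r^{(n)}$ no compactness in the scalar factors can hold) together with the uniform convergence of $p^{(n)},r^{(n)}$ on truncated states therein yields $\sup_{n,\, s\in E_m}\bigl(|p^{(n)}(s_m)|+r^{(n)}(s_m)\bigr)<\infty$, placing both coordinates in a compact interval of $\R$.

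For the sixth coordinate, I would identify $G_m^{(n),6}(s;\cdot)=\mu^{(n)}(s;\cdot)\1_{[0,m]}$ (for integer $m$, $(f_i)_{i\in\mathcal{I}_m}$ spans $L^2([0,m])$) and invoke the Fr\'echet--Kolmogorov criterion in $L^2(\R_+)$. Uniform $L^2$-boundedness follows from Lemma \ref{mu} combined with Assumption \ref{hi}; tail-tightness at infinity is trivial by the support property. Uniform equicontinuity comes from the representation (\ref{1moment}) via Cauchy--Schwarz plus Fubini, yielding
\[
\int_0^\infty\bigl|\mu^{(n)}(s;x+\delta)-\mu^{(n)}(s;x)\bigr|^2\,dx\leq \delta\bigl(\delta+\Delta x^{(n)}\bigr)\sup_s\|h^{(n)}(s;\cdot)\|_{L^2}^2,
\]
which vanishes uniformly in $n,s$ as $\delta\to 0$ by Assumption \ref{hi}.

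For the fifth coordinate I would prove total boundedness in $L^2$ via uniform finite-rank approximation. The Cholesky normalization $\sum_{j\leq i}(c_{ij}^{(n)}(s))^2\leq 1$ together with Lemma \ref{sigma2} gives
\[
\|G_m^{(n),5}(s)\|^2 = \sum_{i\in\mathcal{I}_m}(\sigma_i^{(n)}(s))^2\sum_{j\leq i}(c_{ij}^{(n)}(s))^2 \leq m M^2,
\]
and the same trick reduces the $L^2$-tail over pairs $(i,j)$ with $i\notin J_N := \{1,\dots,N\}$ to a mere tail over $i$, namely $\sum_{i\in\mathcal{I}_m\setminus J_N}(\sigma_i^{(n)}(s))^2$; by Lemma \ref{eps} applied to the $F_i^{(n)}$ together with $\int g^{(n)}(s;y)\,dy\leq M^2$ (Assumption \ref{density}), this can be made arbitrarily small uniformly in $n,s$ for $N$ sufficiently large. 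Thus $G_m^{(n),5}(s)$ is approximated in $L^2$, uniformly in $n,s$, by a bounded element of the finite-dimensional space $\mathrm{span}\{f_i\otimes f_j : i,j\leq N\}$, which is precompact. The main obstacle is precisely that the inner $j$-sum in the definition of $G_m^{(n),5}$ runs over $j\leq i$ rather than over $\mathcal{I}_m$, so the $y$-variable is \emph{not} automatically localized to $[0,m]$; the key device that bypasses this is the Cholesky normalization, which absorbs the inner $j$-sum into $(\sigma_i^{(n)})^2$ before Lemma \ref{eps} is applied.
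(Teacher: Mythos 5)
Your proposal is correct in its architecture and, for the decisive fifth component, essentially reproduces the paper's argument: the paper's Lemma \ref{compact2} likewise reduces the tail $\sum_{i\in\mathcal{I}_m\setminus J}\sum_{j\leq i}\bigl(d^{(n)}_{ij}(s)\bigr)^2$ to $\sum_{i\in\mathcal{I}_m\setminus J}\bigl(\sigma^{(n)}_i(s)\bigr)^2$ via $\sum_{j\leq i}\bigl(c^{(n)}_{ij}(s)\bigr)^2=1$, and then controls that tail by Lemma \ref{eps} together with $\int_0^\infty g^{(n)}(s;y)\,dy\leq M^2$; the only difference is presentational, since you argue total boundedness by uniform finite-rank approximation while the paper extracts a convergent diagonal subsequence from an arbitrary sequence in $K^5_m$. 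Your remark that Assumption \ref{prob21} must be invoked for the scalar components, although absent from the theorem's hypothesis list, matches the paper's own comment preceding Lemma \ref{compact2}. Where you genuinely diverge is the sixth component: the paper's Lemma \ref{compact1} runs the same coefficient-tail argument once more, bounding $\sum_{i\in\mathcal{I}_m\setminus J}\bigl(\mu^{(n)}_i(s)\bigr)^2$ by Lemma \ref{eps} and $\sup_{n,s}\Vert h^{(n)}(s;\cdot)\Vert_{L^2}<\infty$, whereas you invoke the Fr\'echet--Kolmogorov criterion. That route does work, but as written it has a small gap: your translation estimate concerns $\mu^{(n)}(s;\cdot)$ on all of $\R_+$, and that function is in general \emph{not} in $L^2(\R_+)$ (it tends to $\int_0^\infty h^{(n)}(s;y)\,dy$ rather than to $0$ as $x\to\infty$), so the criterion must be applied to the cutoff $\mu^{(n)}(s;\cdot)\1_{[0,m]}$, which produces an additional boundary term of the form $\int_{m-\delta}^m\bigl|\mu^{(n)}(s;x)\bigr|^2dx$. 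This is easily absorbed using the pointwise bound $\bigl|\mu^{(n)}(s;x)\bigr|^2\leq\bigl(x+\Delta x^{(n)}\bigr)\sup_{s}\Vert h^{(n)}(s;\cdot)\Vert_{L^2}^2$, obtained by Cauchy--Schwarz from the representation (\ref{1moment}), so the gap is cosmetic rather than structural; still, the paper's coefficient-tail argument is the more economical route, since it treats the fifth and sixth components by one and the same mechanism.
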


Since $G^{(n),1}_{m}$ and $G^{(n),3}_{m}$ are uniformly bounded by Assumption \ref{prob21}, we only have to care about the last two components of $G^{(n)}_{m}$. Thus, Theorem \ref{compact3} will directly follow from Lemmata \ref{compact2} and \ref{compact1} below.

\begin{lem}\label{compact2}
Let Assumptions \ref{scaling} and \ref{density} be satisfied. Then for each $m\in\N$ the set
\[K^{5}_{m}:=\left\{G^{(n),5}_{m}(s):\ s\in E_{m},\ n\in \N\right\}\subset L^2\left(\R_+^2\right)\] t
 is relatively compact.
\end{lem}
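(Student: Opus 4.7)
The plan is to prove that $K^5_m$ is totally bounded in $L^2(\R_+^2)$; since this space is complete, total boundedness will give relative compactness. The strategy is to approximate each $G^{(n),5}_m(s)$ by its projection $G^{(n),5,L}_m(s)$ onto the finite-dimensional subspace $\mathrm{span}\{f_i \otimes f_j : i \in J_L,\ j \leq i\}$, where $J_L := \{i \in \mathcal{I}_m : l(i) \leq L\}$ retains only the low-level Haar indices, and to show that the approximation error tends to zero uniformly in $(n,s)$ as $L \to \infty$, while the truncated set lies in a bounded subset of a finite-dimensional space and hence is relatively compact.

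The first ingredient is the algebraic identity $\sum_{j \leq i}(d^{(n)}_{ij}(s))^2 = (\sigma^{(n)}_i(s))^2$, which I will derive by equating the two expressions for the conditional variance of $\langle\delta\ol{v}^{(n)}_k, f_i\rangle$: equation (\ref{2moment}) gives $\Delta t^{(n)}(\sigma^{(n)}_i)^2$, while the representation $\langle\delta\ol{v}^{(n)}_k, f_i\rangle = \sum_{j \leq i}d^{(n)}_{ij}(S^{(n)}_{k-1})\delta W^{(n),j}_k$ together with the orthonormality in Corollary \ref{vier} gives $\Delta t^{(n)}\sum_{j\leq i}(d^{(n)}_{ij})^2$. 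Combined with the orthonormality of $(f_i)$ this yields
\[\|G^{(n),5}_m(s)\|^2_{L^2(\R_+^2)} = \sum_{i\in\mathcal{I}_m}(\sigma^{(n)}_i(s))^2,\]
which Lemma \ref{sigma2} bounds by $mM^2$. Moreover, since each $f_i$ with $i\in\mathcal{I}_m$ has support in $[0,m+1]$, all elements of $K^5_m$ are supported in $[0,m+1]^2$.

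The heart of the argument—and the step I expect to be the main obstacle—is the uniform tail estimate. By the identity above, the truncation error equals $\sum_{i\in\mathcal{I}_m,\, l(i) > L}(\sigma^{(n)}_i(s))^2$; I will control this using $(\sigma^{(n)}_i(s))^2 \leq \int_0^\infty g^{(n)}(s;y)[F^{(n)}_i(y)]^2 dy$ together with two geometric facts specific to the Haar basis: at each level $l \geq 0$ the supports of $\{F_i : l(i) = l,\ i\in\mathcal{I}_m\}$ are pairwise disjoint dyadic intervals of length $2^{-l}$, and $\|F_i\|_\infty \leq 2^{-l/2}$. Because $F^{(n)}_i(y) = F_i(\Delta x^{(n)}\lfloor y/\Delta x^{(n)}\rfloor)$ is a piecewise-constant version of $F_i$, the same properties transfer, so $\sum_{l(i) = l,\, i\in\mathcal{I}_m}[F^{(n)}_i(y)]^2 \leq 2^{-l}$ for every $y$. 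Combined with the a priori bound $\int_0^\infty g^{(n)}(s;y)dy \leq M^2$, obtained by taking $D = \R_+$ in Assumption \ref{density} and using $|\omega^{(n)}_k|\leq M$, this yields $\sum_{l(i) = l,\, i\in\mathcal{I}_m}(\sigma^{(n)}_i(s))^2 \leq M^2\cdot 2^{-l}$; summing geometrically gives $\|G^{(n),5}_m(s) - G^{(n),5,L}_m(s)\|^2_{L^2} \leq M^2\cdot 2^{-L}$ uniformly in $n$ and $s$.

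The conclusion is then routine: for fixed $L$ the truncated set lies in a bounded subset of a finite-dimensional space and is hence relatively compact; given $\eps > 0$, choosing $L$ with $M^2\cdot 2^{-L} < \eps^2$ shows that every element of $K^5_m$ lies within $L^2$-distance $\eps$ of that relatively compact set, so $K^5_m$ is totally bounded. Apart from the tail estimate, which genuinely exploits the Haar geometry and the $L^1$-control on $g^{(n)}$ inherited from the boundedness of $\omega^{(n)}_k$, everything else is soft Hilbert-space reasoning.
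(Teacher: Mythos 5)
Your proof is correct, and the essential estimate is the same as the paper's: split the index set $\mathcal{I}_m$ into a finite set of low Haar levels plus a tail, and control the tail uniformly in $(n,s)$ via $\sum_{j\leq i}\bigl(d^{(n)}_{ij}(s)\bigr)^2=\bigl(\sigma^{(n)}_i(s)\bigr)^2\leq\int_0^\infty g^{(n)}(s;y)\bigl[F^{(n)}_i(y)\bigr]^2dy$ together with the Haar geometry (at each level $l\geq0$ and each $y$ at most one $F_i(y)$ is nonzero, of size at most $2^{-l/2}$) and the bound $\int_0^\infty g^{(n)}(s;y)dy\leq M^2$; this is precisely the content of Lemma \ref{eps} as used in the proofs of Lemmata \ref{sumc} and \ref{compact2}. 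Where you differ is in how relative compactness is concluded from this split: you prove total boundedness directly, exhibiting for each $\eps>0$ a bounded finite-dimensional set within $\eps$ of every element of $K^5_m$, with the explicit uniform tail bound $M^2 2^{-L}$; the paper instead takes an arbitrary sequence in $K^5_m$, extracts by Bolzano--Weierstrass a subsequence along which the finitely many retained coefficients $d^{(n_k)}_{ij}(s_k)$ are Cauchy, and iterates with a diagonal argument. The two routes are equivalent in a complete metric space; yours is somewhat more quantitative and avoids the diagonal extraction, at the cost of no real extra work. One small remark: you obtain the identity $\sum_{j\leq i}\bigl(d^{(n)}_{ij}(s)\bigr)^2=\bigl(\sigma^{(n)}_i(s)\bigr)^2$ by equating two expressions for the conditional variance of $\langle\delta\ol{v}^{(n)}_k,f_i\rangle$, which strictly speaking only covers states $s$ attained by the chain; it is cleaner (and valid for all $s\in E_m$) to read it off algebraically from the Cholesky recursion, since $c^{(n)}_{ii}(s)=\bigl(1-\sum_{j<i}(c^{(n)}_{ij}(s))^2\bigr)^{1/2}$ gives $\sum_{j\leq i}\bigl(c^{(n)}_{ij}(s)\bigr)^2=1$ identically. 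This is cosmetic and does not affect the validity of your argument.
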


\begin{proof}
First note that for all $s,\tilde{s}\in E_{m}$ we have
\[\left\Vert G^{(n),5}_{m}(s)-G^{(n),5}_{m}\left(\tilde{s}\right)\right\Vert^2_{L^2\left(\R^2_+\right)}=\sum_{i\in\mathcal{I}_{m}}\sum_{j\leq i}\left(d^{(n)}_{ij}\left(s\right)-d^{(n)}_{ij}\left(\tilde{s}\right)\right)^2.\]
Now consider a sequence $\left(G^{(n_k),5}_{m}(s_k)\right)_{k\in\N}\subset K^{5}_{m}$ and set $a_k:=2^{-k}, \ k\in\N$. W.l.o.g.~we may assume that $s_k\in E_m$ for all $k\in\N$. As in the proof of Lemma \ref{sumc} one can show that there exists a finite index set $J\subset\mathcal{I}_{m}$ such that for all $n\in\N$ and $s\in E_{loc}$,
\[\sum_{i\in \mathcal{I}_{m}\backslash J}\left(\sigma_i^{(n)}\left(s\right)\right)^2<\frac{a_1}{8}.\]
For any $(i,j)\in\N^2$, $\left(d^{(n_k)}_{ij}\left(s_k\right)\right)_{k\in\N}$ is a real-valued sequence, bounded by $M$. Since $J$ is a finite set, there exists a subsequence $(k_q)\subset\N$ and a $q_0=q_0(a_1)\in\N$ such that for each pair $(i,j)$ with $i\in J$ and $j\leq i$,
\[\left(d^{(n_{k_q})}_{ij}\left(s_{k_q}\right)-d^{(n_{k_{q'}})}_{ij}\left(s_{k_{q'}}\right)\right)^2\leq\frac{a_1}{|J|(|J|+1)}\quad\text{for all }q,q'\geq q_0.\]
Hence, for all $q,q'\geq q_0$ we have
\begin{align*}
\sum_{i\in\mathcal{I}_{m}}\sum_{j\leq i}\left(d^{(n_{k_q})}_{ij}\left(s_{k_q}\right)-d^{(n_{k_{q'}})}_{ij}\left(s_{k_{q'}}\right)\right)^2
&\leq \frac{a_1}{2} + 
2\sum_{i\in\mathcal{I}_{m_0}\backslash J}\sum_{j\leq i}\left\{\left(d^{(n_{k_q})}_{ij}\left(s_{k_q}\right)\right)^2+\left(d^{(n_{k_{q'}})}_{ij}\left(s_{k_{q'}}\right)\right)^2\right\}\\
&= \frac{a_1}{2}+2\sum_{i\in\mathcal{I}_{m}\backslash J}\left\{\left(\sigma^{(n_{k_q})}_{i}\left(s_{k_q}\right)\right)^2+\left(\sigma^{(n_{k_{q'}})}_{i}\left(s_{k_{q'}}\right)\right)^2\right\} 
< a_1.
\end{align*}
Next, we consider the sequence $\left(G^{(n_{k_q}),5}_{m}(s_{k_q})\right)_{q\in\N}\subset K^{5}_{m}$ and construct in a similar way as above - with $a_1$ being replaced by $a_2$ - a further subsequence. This will be done iteratively for all $a_k,\ k\in\N$. Finally, we choose the diagonal sequence of all these subsequences, which will be a Cauchy sequence and hence convergent in $L^2(\R^2_+)$. This shows that $K^{5}_{m}$ is relatively compact.
\end{proof}

\begin{lem}\label{compact1}
Let Assumption \ref{hi} be satisfied. Then for each $m\in\N$ the set
\[K^{6}_{m}:=\left\{G^{(n),6}_{m}(s):\ s\in E_{m},\ n\in \N\right\}\subset L^2(\R_+)\]
 is relatively compact. 
\end{lem}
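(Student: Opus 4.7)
The plan is to parallel the proof of Lemma \ref{compact2}: express $\|G^{(n),6}_m(s)\|_{L^2}^2$ as a Parseval-type sum in the Haar coefficients $\mu_i^{(n)}(s)$, show that the contribution of indices outside any sufficiently large finite set $J\subset\mathcal{I}_m$ is uniformly small in $n$ and $s$, and then extract a Cauchy sub-subsequence by a diagonal procedure along refined finite index sets.

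First I would record that for every $i\in\mathcal{I}_m$ the basis function $f_i$, and hence also $F_i^{(n)}$, is supported in $[0,m]$ (this uses $1/\Delta x^{(n)}\in\N$ and the definition of $\mathcal{I}_m$). Since $(f_i)_{i\in\mathcal{I}_m}$ is orthonormal, this yields the Parseval identity
\[\bigl\|G^{(n),6}_m(s)\bigr\|_{L^2(\R_+)}^2=\sum_{i\in\mathcal{I}_m}\bigl(\mu_i^{(n)}(s)\bigr)^2.\]
Using $|\mathrm{supp}(F_i^{(n)})|\leq m$ for $i\in\mathcal{I}_m$, Cauchy--Schwarz then gives the pointwise-in-$i$ bound
\[\bigl(\mu_i^{(n)}(s)\bigr)^2=\biggl(\int_0^\infty h^{(n)}(s;y)F_i^{(n)}(y)\,dy\biggr)^2\leq m\int_0^\infty \bigl(h^{(n)}(s;y)\bigr)^2\bigl(F_i^{(n)}(y)\bigr)^2\,dy,\]
so that for any $J\subset\mathcal{I}_m$,
\[\sum_{i\in\mathcal{I}_m\setminus J}\bigl(\mu_i^{(n)}(s)\bigr)^2\leq m\int_0^\infty \bigl(h^{(n)}(s;y)\bigr)^2\sum_{i\in\mathcal{I}_m\setminus J}\bigl(F_i^{(n)}(y)\bigr)^2\,dy.\]
Assumption \ref{hi}(i) combined with the triangle inequality yields $C:=\sup_{n,s}\|h^{(n)}(s;\cdot)\|_{L^2}^2<\infty$; taking $J=\emptyset$ and invoking $\sum_{i\in\mathcal{I}_m}(F_i^{(n)}(y))^2\leq m$ from the basis preliminaries shows that $K^{6}_{m}$ is $L^2$-bounded by $m^2C$.

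For the uniform tail estimate, given $\eps>0$ Lemma \ref{eps} produces a finite set $J=J(\eps)\subset\mathcal{I}_m$ with $\sum_{i\in\mathcal{I}_m\setminus J}(F_i^{(n)}(y))^2\leq\eps$ for every $n$ and $y$, which inserted into the last display above gives $\sup_{n,s}\sum_{i\in\mathcal{I}_m\setminus J}(\mu_i^{(n)}(s))^2\leq mC\eps$. Finally, given any sequence $(G^{(n_k),6}_m(s_k))\subset K^{6}_{m}$, I take $\eps_q=2^{-q}$ with corresponding finite sets $J_q$; each real sequence $(\mu_i^{(n_k)}(s_k))_k$ with $i\in J_1$ is bounded by the $L^2$-bound just established, so a finite coordinatewise extraction makes all of them converge, and iterating over $q$ together with a diagonal argument produces a sub-subsequence along which $\mu_i^{(n_k)}(s_k)$ converges for every $i\in\bigcup_q J_q$. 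The tail estimate then implies that the corresponding sub-subsequence of $G^{(n_k),6}_m(s_k)$ is Cauchy in $L^2(\R_+)$, proving relative compactness. I expect the main obstacle to be the uniform tail bound: once the Parseval identity and the support property of $F_i^{(n)}$ for $i\in\mathcal{I}_m$ are in hand, it reduces to a Cauchy--Schwarz argument driven by Lemma \ref{eps} and the $L^2$-control of $h^{(n)}$ from Assumption \ref{hi}(i).
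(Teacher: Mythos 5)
Your proof is correct and follows essentially the same route as the paper's: the paper likewise bounds $\left(\mu_i^{(n)}(s)\right)^2$ by Cauchy--Schwarz against $m\int_0^\infty\left(h^{(n)}(s;y)\right)^2\left(F_i^{(n)}(y)\right)^2dy$, invokes Assumption \ref{hi} for the uniform $L^2$-bound on $h^{(n)}$ and Lemma \ref{eps} for the uniform tail estimate over $\mathcal{I}_m\backslash J$, and then extracts a Cauchy subsequence by the same finite-coordinate-plus-diagonal procedure as in Lemma \ref{compact2}. The only difference is that you make the Parseval identity and the $L^2$-boundedness of $K^6_m$ explicit, which the paper leaves implicit.
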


\begin{proof}
Consider some sequence $\left(G^{(n_k),6}_{m}(s_k)\right)_{k\in\N}\subset K^{6}_{m}$ and set again $a_k:=2^{-k},\ k\in\N$. As in the proof of Lemma \ref{compact2} we may assume that $s_k\in E_m$ for all $k\in\N$. By Assumption \ref{hi} there exists $K>0$ such that for all $n\in\N$ and $s\in E_{loc}$,
\[\int_0^\infty \left|h^{(n)}(s;y)\right|^2dy<K.\]
We apply Lemma \ref{eps} to find a finite subset $J\subset\mathcal{I}_{m}$ such that for all $n\in\N$ and $y\in\R_+$,
\[\sum_{i\in\mathcal{I}_m\backslash J}\left(F^{(n)}_i(y)\right)^2\leq\frac{a_1}{8Km}.\]
Hence for all $n\in\N$ and $s\in E_{loc}$,
\begin{eqnarray*}
\sum_{i\in\mathcal{I}_{m}\backslash J}\left(\mu_i^{(n)}(s)\right)^2&=&
\sum_{i\in\mathcal{I}_{m}\backslash J}\left(\int_0^{\infty} h^{(n)}(s;y)F_i^{(n)}(y)dy\right)^2 \\
&\leq& \sum_{i\in\mathcal{I}_{m}\backslash J}m\int_0^{m} \left(h^{(n)}(s;y)F_i^{(n)}(y)\right)^2dy\\
&\leq& \frac{a_1}{8K}\int_0^\infty\left|h^{(n)}(s;y)\right|^2dy < \frac{a_1}{8}.
\end{eqnarray*}
The rest of the proof follows as in the proof of Lemma \ref{compact2}.
\end{proof}

\subsection{Continuity of the integrand}\label{continuity}

In this subsection we will prove for all $m\in\N$ the continuity of $G_{m}$.
First note that by Assumption \ref{prob2} there exists some $L>0$ such that for all $s=(b,v),\ \tilde{s}=\left(\tilde{b},\tilde{v}\right)\in E_{m}$,
\[\left|G^1_{m}(s)-G^1_{m}(\tilde{s})\right|\leq L\left(1+|b| +|\tilde{b}| \right)\left(1+\left\Vert v\right\Vert_{L^2}+\left\Vert \tilde{v}\right\Vert_{L^2}\right)\left(\left|b-\tilde{b}\right|+\left\Vert v-\tilde{v}\right\Vert_{L^2}\right).
\]
Hence, for any $c>0$ there exists $L_c<\infty$ such that for all $s,\tilde{s}\in E_m$ with $\left\Vert s\right\Vert_{E}\leq c ,\ \left\Vert\tilde{s}\right\Vert_{E}\leq c$,
\[\left|G^1_{m}(s)-G^1_{m}(\tilde{s})\right|\leq L_c\left\Vert s-\tilde{s}\right\Vert_{E}.\]
A similar result holds for $G^3_{m}$. It remains to show the continuity of $G_{m}^5$ and $G_{m}^6$.

\begin{lem}\label{c4}
Under Assumption \ref{h} there exists for every $m\in\N$ and $c>0$ a constant $L^m_c$ such that for all $s,\tilde{s}\in E_{m}$ with $\left\Vert s\right\Vert_{E}\leq c,\ \left\Vert\tilde{s}\right\Vert_{E}\leq c$ we have
\[\left\Vert G_{m}^6(s)-G_{m}^6(\tilde{s})\right\Vert_{L^2(\R_+)}\leq L_c^m\left\Vert s-\tilde{s}\right\Vert_{E}.\]
\end{lem}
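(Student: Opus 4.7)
The plan is to reduce the $L^2$-norm of $G_m^6(s) - G_m^6(\tilde s)$ to an $L^2$-difference of the driving densities $h(s_m;\cdot) - h(\tilde s_m;\cdot)$, and then invoke the local Lipschitz hypothesis \ref{hii}, passed to the limit $n\to\infty$.

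First I would use Parseval's identity in the orthonormal system $(f_i)_{i\in\mathcal{I}_m}$, which by construction of the Haar basis consists of functions supported in $[0,m]$ and forms a basis of $L^2([0,m])$, to rewrite
\[
\|G_m^6(s) - G_m^6(\tilde s)\|_{L^2(\R_+)}^2
= \sum_{i\in\mathcal{I}_m}\bigl(\mu_i(s_m)-\mu_i(\tilde s_m)\bigr)^2
= \|\mu(s_m;\cdot) - \mu(\tilde s_m;\cdot)\|_{L^2([0,m])}^2.
\]
Next I would observe, via Fubini applied to $\mu_i(s) = \int_0^\infty h(s;y) F_i(y)\,dy$ with $F_i(y) = \int_y^\infty f_i(x)\,dx$, that $\mu(s;\cdot)$ admits the pointwise primitive representation $\mu(s;x) = \int_0^x h(s;y)\,dy$ on $[0,m]$. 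Cauchy--Schwarz then gives, for any $x\in[0,m]$,
\[
\bigl|\mu(s_m;x) - \mu(\tilde s_m;x)\bigr|^2
\le m\int_0^\infty |h(s_m;y) - h(\tilde s_m;y)|^2\,dy,
\]
and integrating over $[0,m]$ yields
\[
\|G_m^6(s) - G_m^6(\tilde s)\|_{L^2(\R_+)}^2
\le m^2\int_0^\infty|h(s_m;y) - h(\tilde s_m;y)|^2\,dy.
\]

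Then I would transfer the local Lipschitz estimate of Assumption \ref{hii} from $h^{(n)}$ to the limit $h$: by Assumption \ref{hi} we have uniform $L^2(\R_+)$-convergence $h^{(n)}(s;\cdot)\to h(s;\cdot)$ in $s\in E_{loc}$, so applying the triangle inequality to \ref{hii} and letting $n\to\infty$ yields the identical Lipschitz inequality with $h$ in place of $h^{(n)}$. Finally I would apply this bound at $s_m = (b\wedge m, v)$, $\tilde s_m = (\tilde b\wedge m, \tilde v)$; using $\|s\|_E,\|\tilde s\|_E\le c$ one controls $|b\wedge m|,|\tilde b\wedge m|\le c$, the norms $\|v\1_{[0,(b\wedge m)\vee(\tilde b\wedge m)]}\|_{L^2}\le\|v\|_{L^2}\le c$ (and similarly for $\tilde v$), and $|(b\wedge m)-(\tilde b\wedge m)|\le|b-\tilde b|$, $\|(v-\tilde v)\1_{[0,(b\wedge m)\vee(\tilde b\wedge m)]}\|_{L^2}\le\|v-\tilde v\|_{L^2}$. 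The right-hand side then collapses into $\|s-\tilde s\|_E^2$ up to a constant of the form $L_c^m := mL(1+2c)^2$, which proves the lemma.

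The only mildly subtle step is the Fubini identification of $\mu(s;\cdot)$ as the primitive of $h(s;\cdot)$, which is what allows the replacement of an infinite sum over basis coefficients by a single $L^2$-norm on the $y$-side; the remainder is routine bookkeeping of prefactors in terms of $c$ and $m$.
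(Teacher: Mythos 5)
Your proof is correct and follows essentially the same route as the paper: both arguments reduce $\left\Vert G_m^6(s)-G_m^6(\tilde s)\right\Vert_{L^2}^2$ to the intermediate bound $m^2\int_0^\infty|h(s;y)-h(\tilde s;y)|^2dy$ and then invoke the Lipschitz estimate of Assumption \ref{hii} — the paper gets there by a coefficient-wise Cauchy--Schwarz together with $\sum_{i\in\mathcal{I}_m}(F_i(y))^2\leq m$, while you use the primitive representation $\mu(s;x)=\int_0^x h(s;y)\,dy$, which is an equivalent computation. Your explicit observation that the Lipschitz bound must be transferred from $h^{(n)}$ to the limit $h$ via the uniform convergence in Assumption \ref{hi} is a point the paper leaves implicit, and is a welcome addition.
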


\begin{proof}
Due to Assumption \ref{h} we have for all $s,\tilde{s}\in E_{m}$,
\begin{eqnarray*}
\left\Vert G_{m}^6(s)-G_{m}^6(\tilde{s})\right\Vert_{L^2}^2&=&\sum_{i\in\mathcal{I}_{m}}\left(\mu_i(s)-\mu_i(\tilde{s})\right)^2 \\
&=& \sum_{i\in\mathcal{I}_{m}}\left(\int_0^\infty \left[h\left(s;y\right)-h\left(\tilde{s};y\right)\right]F_i(y)dy\right)^2\\
&\leq&\sum_{i\in\mathcal{I}_{m}}m\int_0^m \left[h(s;y)-h(\tilde{s};y)\right]^2\left(F_i(y)\right)^2dy \\
&\leq& m^2\int_0^\infty \left[h(s;y)-h(\tilde{s};y)\right]^2dy\\
&\leq& m^2L^2\left(1+|b|+|\tilde{b}|\right)^2\left(1+\left\Vert v\right\Vert_{L^2}+\left\Vert \tilde{v}\right\Vert_{L^2}\right)^2\left(\left|b-\tilde{b}\right|+\left\Vert v-\tilde{v}\right\Vert_{L^2}\right)^2.
\end{eqnarray*}
\end{proof}

\begin{lem}\label{dLip}
Suppose that Assumptions \ref{scaling}, \ref{density}, \ref{g}, and \ref{h} are satisfied. Then there exists for all $c>0$ and $m,i,j\in\N$ with $j\leq i$ a constant $L^{m,c}_{ij}>0$ such that for all $s,\tilde{s}\in E_{m}$ with \mbox{$\left\Vert s\right\Vert_{E}\leq c,\ \left\Vert\tilde{s}\right\Vert_{E}\leq c$,}
\[|d_{ij}(s)-d_{ij}(\tilde{s})|\leq L^{m,c}_{ij}\left\Vert s-\tilde{s}\right\Vert_{E}.\]
\end{lem}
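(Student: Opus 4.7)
The plan is to reduce the Lipschitz property of $d_{ij} = \sigma_i c_{ij}$ to Lipschitz properties of $\sigma_i$ and $\rho_{ij}$, and then to propagate Lipschitz continuity through the Cholesky recursion defining $c_{ij}$.

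\textbf{Step 1 (Lipschitz bound for $g$ and $h$ in the limit).} Passing to the limit $n \to \infty$ in Assumptions~\ref{gii} and \ref{hii} (which hold with an $n$-independent constant $L$), one sees that $g$ and $h$ inherit the same inequalities. For $s=(b,v),\tilde{s}=(\tilde b,\tilde v)\in E_m$ with $\|s\|_E,\|\tilde{s}\|_E\leq c$, the prefactor $L(1+|b|+|\tilde b|)(1+\|v\1_{[0,b\vee\tilde b]}\|_{L^2}+\|\tilde v\1_{[0,b\vee\tilde b]}\|_{L^2})$ is bounded by $L(1+2c)^2$, so $g(\cdot;y)$ and $h(\cdot;y)$ satisfy uniform $L^1$- and $L^2$-Lipschitz estimates of the form $C_c\|s-\tilde{s}\|_E$.

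\textbf{Step 2 (Lipschitz continuity of $\sigma_i$ and of $\sigma_i\sigma_j\rho_{ij}$).} Using $|F_i(y)|\leq 1$ and $|F_i(y)F_j(y)|\leq 1$,
\[
\bigl|\sigma_i^2(s)-\sigma_i^2(\tilde{s})\bigr|\leq\int_0^\infty|g(s;y)-g(\tilde{s};y)|\,dy \leq C_c \|s-\tilde{s}\|_E,
\]
and the same bound holds for $|\sigma_i(s)\sigma_j(s)\rho_{ij}(s)-\sigma_i(\tilde{s})\sigma_j(\tilde{s})\rho_{ij}(\tilde{s})|$. Since Lemma~\ref{sigpos} gives $\sigma_i(s)+\sigma_i(\tilde{s})\geq 2\inf_{E_{loc}}\sigma_i>0$, the factorisation $|\sigma_i(s)-\sigma_i(\tilde{s})|=|\sigma_i^2(s)-\sigma_i^2(\tilde{s})|/(\sigma_i(s)+\sigma_i(\tilde{s}))$ makes $\sigma_i$ Lipschitz on $\{\|s\|_E\leq c\}$. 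Dividing through by $\sigma_i(s)\sigma_j(s)$, which is bounded below by Lemma~\ref{sigpos} and above by $M^2$ from Lemma~\ref{sigma2}, yields the Lipschitz property of $\rho_{ij}$.

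\textbf{Step 3 (Induction on $i$ for $c_{ij}$).} I induct on $i$; the base $c_{11}\equiv 1$ is trivial. For the induction step use the recursion
\[
c_{ij}(s)=\frac{1}{c_{jj}(s)}\Bigl(\rho_{ij}(s)-\sum_{l<j}c_{il}(s)c_{jl}(s)\Bigr),\quad j<i,\qquad c_{ii}(s)=\Bigl(1-\sum_{j<i}c_{ij}^{\,2}(s)\Bigr)^{1/2}.
\]
Since each $|c_{kl}|\leq 1$, $\rho_{ij}$ is Lipschitz by Step 2, and $c_{jj}$ is bounded and bounded below by $\inf_{E_{loc}}c_{jj}>0$ (Lemma~\ref{ca}), the off-diagonal $c_{ij}$ for $j<i$ is Lipschitz as a finite combination of bounded Lipschitz functions divided by a function bounded away from zero. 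For the diagonal, $\inf_{E_{loc}}c_{ii}>0$ (again Lemma~\ref{ca}) ensures that $1-\sum_{j<i}c_{ij}^2$ stays bounded away from zero, so the square root preserves Lipschitz continuity.

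\textbf{Step 4 (Conclusion).} Finally $d_{ij}(s)=\sigma_i(s)c_{ij}(s)$ is the product of two Lipschitz functions that are uniformly bounded on the set $\{s\in E_m:\|s\|_E\leq c\}$ (by Lemma~\ref{sigma2} and the bound $|c_{ij}|\leq 1$), hence itself Lipschitz with constant $L^{m,c}_{ij}$ depending on $m$, $c$, $i$, $j$. The potentially delicate points are Steps 2 and 3: one must carefully exploit that $\sigma_i$ and $c_{ii}$ are bounded away from $0$ so that taking reciprocals and square roots does not destroy the Lipschitz property, and track that the induction introduces only finitely many such operations for each fixed pair $(i,j)$.
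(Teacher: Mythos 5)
Your proof is correct and follows essentially the same route as the paper's: bound $\left|\sigma_i^2(s)-\sigma_i^2(\tilde{s})\right|$ and $\left|\sigma_i(s)\sigma_j(s)\rho_{ij}(s)-\sigma_i(\tilde{s})\sigma_j(\tilde{s})\rho_{ij}(\tilde{s})\right|$ by $\int_0^\infty|g(s;y)-g(\tilde{s};y)|dy$, use the lower bounds on $\sigma_i$ (Lemma \ref{sigpos}) and $c_{ii}$ (Lemma \ref{ca}) to propagate Lipschitz continuity through the quotient, the square root, and the Cholesky recursion, and conclude for the product $d_{ij}=\sigma_ic_{ij}$ of bounded Lipschitz functions. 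Your Step 1, which makes explicit that the limit function $g$ inherits the estimate of Assumption \ref{gii} by letting $n\to\infty$ in the uniform convergence of Assumption \ref{gi}, is a detail the paper glosses over (it invokes Assumption \ref{g} for $g$ directly), and your mention of $h$ there is superfluous since only $g$ enters $\sigma_i$ and $\rho_{ij}$.
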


\begin{proof}
Since $d_{ij}=\sigma_i c_{ij}$ for all $j\leq i$ and $|\sigma_i|\leq M,\ |c_{ij}|\leq 1$, it is sufficient to show the inequality for $\sigma_i$ and $c_{ij}$ separately. For all $i,j\in\N$ and $s,\tilde{s}\in E_{m}$ by Assumption \ref{g},
\begin{eqnarray*}
\left|\sigma_i(s)\sigma_j(s)\rho_{ij}(s)-\sigma_i(\tilde{s})\sigma_j(\tilde{s})\rho_{ij}(\tilde{s})\right|
&\leq&\int_0^\infty |g(s;y)-g(\tilde{s};y)||F_i(y)F_j(y)|dy\\
&\leq& L\left(1+|b|+|\tilde{b}|\right)\left(1+\left\Vert v\right\Vert_{L^2}+\left\Vert \tilde{v}\right\Vert_{L^2}\right)\left(\left|b-\tilde{b}\right|+\left\Vert v-\tilde{v}\right\Vert_{L^2}\right).
\end{eqnarray*}
In the case $i=j$, using the fact that $\inf_{s\in E_{loc}}\sigma_i(s)>0$ by Lemma \ref{sigpos}, we can thus find $L^{m,c}_i>0$ for each $i\in\N$ such that for all $s,\tilde{s}\in E_{m}$ with $\left\Vert s\right\Vert_E\leq c,\ \left\Vert\tilde{s}\right\Vert_E\leq c$,
\[\left|\sigma_i(s)-\sigma_i(\tilde{s})\right|=\frac{\left|\sigma^2_i(s)-\sigma_i^2(\tilde{s})\right|}{\sigma_i(s)+\sigma_i(\tilde{s})}\leq L^{m,c}_i\left\Vert s-\tilde{s}\right\Vert_E.\]
Using again the boundedness away from zero of $\sigma_i$ and $\sigma_j$, we may also find $K^{m,c}_{ij}>0$ for each $(i,j)$ such that  for all $s,\tilde{s}\in E_{m}$ with $\left\Vert s\right\Vert_E\leq c,\ \left\Vert\tilde{s}\right\Vert_E\leq c$,
\begin{eqnarray*}
|\rho_{ij}(s)-\rho_{ij}(\tilde{s})|&\leq& \frac{\left|\sigma_i(s)\sigma_j(s)\rho_{ij}(s)-\sigma_i(\tilde{s})\sigma_j(\tilde{s})\rho_{ij}(\tilde{s})\right|+\left|\sigma_i(s)\sigma_j(s)-\sigma_i(\tilde{s})\sigma_j(\tilde{s}))\right|}{\sigma_i(s)\sigma_j(s)}\\
&\leq& K^{m,c}_{ij}\left\Vert s-\tilde{s}\right\Vert_E.
\end{eqnarray*}
Because of the recursive definition of the $c_{ij},\ j\leq i,$ as functions of the $\rho_{ij},\ j\leq i,$ the same inequality  (with a different constant) follows for each $c_{ij}$ from the fact that all the $c_{ij},\ j\leq i,$ are bounded by $1$ and $\inf_{s\in E_{loc}} c_{ii}(s)>0$ for all $i\in\N$ by Lemma \ref{ca}.
\end{proof}

\begin{lem}\label{c3}
Let Assumptions \ref{scaling}, \ref{density}, \ref{g}, and \ref{h} be satisfied. If $(s_n)\subset D\left(E_{m};[0,T]\right)$ is a sequence with $\sup_{u\leq t}\left\Vert s_n(u)-s(u)\right\Vert_{E}\ra0$ for $t\in[0,T]$, then also
\[\sup_{u\leq t}\left\Vert G_{m}^5\left(s_n(u)\right)-G_{m}^5\left(s(u)\right)\right\Vert_{L^2(\R_+^2)}\ra0.\]
\end{lem}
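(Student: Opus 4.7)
The plan is to exploit the $L^2(\R_+^2)$-orthonormality of the tensor products $(f_i \otimes f_j)_{i,j \in \N}$, which turns the claim into the statement
\[
\sup_{u \leq t} \sum_{i \in \mathcal{I}_m} \sum_{j \leq i} \bigl(d_{ij}(s_n(u)_m) - d_{ij}(s(u)_m)\bigr)^2 \longrightarrow 0,
\]
where $s_m := (b \wedge m, v)$ for $s=(b,v) \in E_m$. From the uniform convergence hypothesis I first extract a radius $c < \infty$ with $\sup_n \sup_{u \leq t}\|s_n(u)\|_E \vee \|s(u)\|_E \leq c$, and note that since the truncation $s \mapsto s_m$ is non-expansive on $E$, I also retain $\sup_{u \leq t}\|s_n(u)_m - s(u)_m\|_E \to 0$.

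The main obstacle is that $\mathcal{I}_m$ is infinite while the Lipschitz bound from Lemma \ref{dLip} only delivers pointwise constants $L^{m,c}_{ij}$ with no control in $(i,j)$. I would therefore fix $\eps > 0$ and invoke Lemma \ref{eps} to choose a finite subset $J \subset \mathcal{I}_m$ such that $\sum_{i \in \mathcal{I}_m \setminus J} F_i(y)^2 \leq \eps$ for all $y \in \R_+$, then split the sum into a finite head over $i \in J$ and a tail over $\mathcal{I}_m \setminus J$.

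For the head, Lemma \ref{dLip} applied with the uniform radius $c$ yields a finite constant $C_{J,m,c}$ with
\[
\sup_{u \leq t} \sum_{i \in J} \sum_{j \leq i} \bigl(d_{ij}(s_n(u)_m) - d_{ij}(s(u)_m)\bigr)^2 \leq C_{J,m,c}\, \sup_{u \leq t}\|s_n(u) - s(u)\|_E^2 \longrightarrow 0.
\]
For the tail I would exploit the structural identity $\sum_{j \leq i} c_{ij}(s)^2 = 1$, which follows from the Cholesky-type construction in Appendix \ref{decomposition} together with $\inf_s c_{ii}(s)>0$ from Lemma \ref{ca}. This yields
\[
\sum_{j \leq i} d_{ij}(s)^2 = \sigma_i(s)^2 = \int_0^\infty g(s;y)\, F_i(y)^2\, dy,
\]
so that, using Fubini and the uniform bound $\int_0^\infty g(s;y)\, dy \leq M^2$ (inherited from $|\omega_k^{(n)}| \leq M$), one obtains $\sum_{i \in \mathcal{I}_m \setminus J}\sigma_i(s)^2 \leq \eps M^2$ uniformly in $s$. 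The elementary inequality $(a-b)^2 \leq 2a^2 + 2b^2$ then bounds the tail contribution by $4\eps M^2$ uniformly in $n$ and $u$. Combining the two bounds and letting $\eps \to 0$ concludes the proof. The critical step is the tail estimate: without the orthogonal-decomposition identity $\sum_{j \leq i} d_{ij}^2 = \sigma_i^2$, entrywise Lipschitz control would not suffice to dominate the infinite sum uniformly.
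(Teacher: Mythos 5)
Your proposal is correct and follows essentially the same route as the paper's proof: uniform boundedness from the hypothesis, a finite head set $J$ from Lemma \ref{eps} handled by the entrywise Lipschitz bounds of Lemma \ref{dLip}, and the tail dominated via $\sum_{j\leq i}d_{ij}^2=\sigma_i^2$ together with the uniform-in-$s$ smallness of $\sum_{i\in\mathcal{I}_m\setminus J}\sigma_i^2$. The only (harmless) difference is that you track the truncation $s\mapsto s_m$ explicitly, which the paper suppresses.
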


\begin{proof}
Fix $\eps>0$ and let $(s_n)\subset D\left(E_{m};[0,T]\right)$ be any sequence satisfying $\sup_{u\leq t}\left\Vert s_n(u)-s(u)\right\Vert_{E}\ra0$. Then there exists $c>0$ such that $\left\Vert s(u)\right\Vert_{E}\leq c$ and $\left\Vert s_n(u)\right\Vert_{E}\leq c$ for all $n\in\N$ and $u\in[0,t]$. Similarly to the proof of Lemma \ref{sumc} we can find a finite index set $J\subset\mathcal{I}_{m}$  such that for all $\tilde{s}\in E_{loc}$,
\[\sum_{i\in\mathcal{I}_{m}\backslash J}\sigma_i^2\left(\tilde{s}\right)< \frac{\eps}{8}.\]
Moreover, by Lemma \ref{dLip} we can find an $n_0=n_0(\eps,c)$ such that for all $n\geq n_0$ and $u\leq t$,
\[\left(d_{ij}(s_n(u))-d_{ij}(s(u))\right)^2\leq\frac{\eps}{|J|(|J|+1)}\quad\forall\ i\in J,\ j\leq i.\]
Thus for all $n\geq n_0$,
\begin{eqnarray*}
\sup_{u\leq t}\left\Vert G^3_{m}(s_n(u))-G_{m}^3(s(u))\right\Vert^2_{L^2\left(\R_+^2\right)}&=&\sup_{u\leq t}\sum_{i\in\mathcal{I}_{m}}\sum_{j\leq i}\left(d_{ij}\left(s_n(u)\right)-d_{ij}\left(s(u)\right)\right)^2\\
&\leq& \frac{\eps}{2} + 2\sup_{u\leq t}\sum_{i\in\mathcal{I}_{m}\backslash J}\sum_{j\leq i}\left\{\left(d_{ij}\left(s_n(u)\right)\right)^2+\left(d_{ij}\left(s(u)\right)\right)^2\right\}\\
&=&\frac{\eps}{2}+2\sup_{u\leq t}\sum_{i\in\mathcal{I}_{m}\backslash J}\left\{\left(\sigma_{i}\left(s_n(u)\right)\right)^2+\left(\sigma_{i}\left(s(u)\right)\right)^2\right\}<\eps.
\end{eqnarray*}
\end{proof}

The preceding results immediately yield the following theorem.

\begin{thm}\label{cts}
Let Assumptions \ref{scaling}, \ref{prob2}, \ref{density}, \ref{g}, and \ref{h} be satisfied. 
If $(s_n)\subset D\left(E_{m};[0,T]\right)$ is a sequence such that $\sup_{u\leq t}\left\Vert s_n(u)-s(u)\right\Vert_{E}\ra0$ for $t\in[0,T]$, then also
\[\sup_{u\leq t}\left\Vert G_{m}\left(s_n(u)\right)-G_{m}\left(s(u)\right)\right\Vert_{\hat{E}}\ra0\qquad\forall\ m\in\N.\]
\end{thm}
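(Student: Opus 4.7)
The plan is to decompose the $\hat{E}$-norm as a sum of the norms of the four non-trivial components of $G_m$ and to invoke the results already established in this subsection for each component separately. Since $\sup_{u\leq t}\Vert s_n(u)\Vert_E \ra \sup_{u\leq t}\Vert s(u)\Vert_E$, there is some $c < \infty$ such that $\Vert s_n(u)\Vert_E \leq c$ and $\Vert s(u)\Vert_E \leq c$ for all $n \in \N$ and $u \in [0,t]$. This uniform bound is exactly what is needed to apply the local Lipschitz estimates.

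For the first and third components, the remark at the start of Subsection \ref{continuity} (which is a direct consequence of Assumption \ref{prob2}) provides a constant $L_c$ such that $|G^1_m(s_n(u)) - G^1_m(s(u))| \leq L_c \Vert s_n(u) - s(u)\Vert_E$ and analogously for $G^3_m$. Taking the supremum over $u \leq t$ and letting $n \ra \infty$ yields the desired convergence. For the sixth component, Lemma \ref{c4} gives a constant $L_c^m$ such that
\[
\left\Vert G_m^6(s_n(u)) - G_m^6(s(u))\right\Vert_{L^2(\R_+)} \leq L_c^m \left\Vert s_n(u) - s(u)\right\Vert_E,
\]
so again uniform convergence transfers directly. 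For the fifth component, Lemma \ref{c3} provides precisely the statement $\sup_{u\leq t}\Vert G_m^5(s_n(u)) - G_m^5(s(u))\Vert_{L^2(\R_+^2)} \ra 0$.

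Adding these four contributions yields $\sup_{u \leq t} \Vert G_m(s_n(u)) - G_m(s(u))\Vert_{\hat{E}} \ra 0$. There is no genuine obstacle here, since all substantive work has been done in the preceding lemmata; the theorem is essentially a bookkeeping statement that the componentwise continuity assembles into continuity in the product norm on $\hat{E}$. The only point to be careful about is the extraction of the uniform radius $c$ from the assumption of uniform convergence in $E$, which is immediate.
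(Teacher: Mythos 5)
Your proof is correct and follows essentially the same route as the paper: the theorem is stated there as an immediate consequence of the preceding results, namely the local Lipschitz bounds for $G^1_m$ and $G^3_m$ from Assumption \ref{prob2}, Lemma \ref{c4} for $G^6_m$, and Lemma \ref{c3} for $G^5_m$, assembled componentwise in the $\hat{E}$-norm. Your extraction of the uniform radius $c$ is exactly the step the paper performs inside the proof of Lemma \ref{c3}, so nothing is missing.
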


\s{Convergence of the stochastic integrals}\label{result}

Before stating our main result, we need one more assumption on the convergence of the initial values.  

\begin{ass}\label{initial}
There exists $S_0=(B_0,V_0)\in E_{loc}$ such that for all $m\in\N$,
\[\left|B^{(n)}_0-B_0\right|+\left\Vert \left(V^{(n)}_0-V_0\right)\1_{[0,m]}\right\Vert_{L^2}\ra0.\]
\end{ass}

For all $n,m\in\N$ we set 
\[S^{(n),m}_0:=\left(B_0^{(n)},V^{(n)}_0\1_{[0,m]}\right),\quad S^{m}_0:=\left(B_0,V_0\1_{[0,m]}\right)\] 
and denote by $\tilde{S}^{(n),m}$ the solution of 
\[\tilde{S}^{(n),m}(t)=S^{(n),m}_0+\int_0^t G^{(n)}_{m}\left(\tilde{S}^{(n),m}(u-)\right)dY^{(n)}(u),\quad t\in[0,T].\]
Furthermore, we define for all $m,n\in\N$ the stopping time
\begin{eqnarray*}
\tau^{(n)}_{m}:=\inf\left\{t\geq0:\ B^{(n)}(t)\geq m\right\}\wedge T
\end{eqnarray*}
and the process
\[S^{(n),m}(t):=\left(B^{(n)}\left(t\wedge\tau^{(n)}_{m}\right),V^{(n)}\left(t\wedge\tau^{(n)}_{m}\right)\1_{[0,m]}\right),\quad t\in[0,T].\]
Note that, due to Assumptions \ref{prob22}, \ref{gii}, and \ref{hii} for all $n,m\in\N$ the process $\tilde{S}^{(n),m}$ equals $S^{(n),m}$ on $\left[0,\tau^{(n)}_{m}\right]$ and
\[\tau^{(n)}_{m}=\inf\left\{t\geq0:\ \tilde{B}^{(n),m}(t)\geq m\right\}\wedge T\quad\text{a.s.}\]

\begin{df}
We say that $S$ is a (global) solution of the infinite dimensional SDE
\begin{equation}\label{ISDE}
S(t)=S_0+\int_0^tG(S(u))dY(u),\quad t\in[0,T],
\end{equation}
if there exists a filtration $(\F_t)$ to which $S=(B,V)$ and $Y$ are adapted and for {\bf all} $m\in\N$,
\[\left(B(t),V(t)\1_{[0,m]}\right)=S_0^m+\int_0^tG^m(S(u))dY(u),\quad t\in[0,T].\]
We say that $(S,\tau,m)$ is a local solution of (\ref{ISDE})
if there exists a filtration $(\F_t)$ to which $S=(B,V)$ and $Y$ are adapted, $\tau$ is an $(\F_t)$-stopping time, and $S=(B,V)$ satisfies the SDE
\[\left(B(t\wedge\tau),V(t\wedge\tau)\1_{[0,m]}\right)=S_0^m+\int_0^{t\wedge\tau}G^m(S(u))dY(u),\quad t\in[0,T].\]
\end{df}

\subs{Local relative compactness of the state process}

Our main result states that the sequence of LOB models is relatively compact after localization and that any accumulation point is the solution to a certain infinite dimensional SDE driven by a pair consisting of a Brownian motion and a cylindrical Brownian motion.  

\begin{thm}\label{main}
Under Assumptions \ref{prob1}, \ref{scaling}, \ref{prob2}, \ref{density}, \ref{g}, \ref{h}, \ref{U}, and \ref{initial} the sequence $\left(S^{(n),m}\right)_{n\in\N}$ is relatively compact for all $m\in\N$ 
and any limit point $S^{m}=(B^{m},V^{m})$ gives a local solution $\left(S^{m},\tau_{m},m\right)$ of (\ref{ISDE}), i.e.~for $(t,x)\in[0,T]\times[0,m]$,
\begin{equation}
\begin{split}
B^{m}(t\wedge\tau_{m})&=B_0^{m}+\int_0^{t\wedge\tau_{m}}p\left(S^{m}(u)\right)du+\int_0^{t\wedge\tau_{m}}r\left(S^{m}(u)\right)dZ(u),\\
V^{m}(t\wedge\tau_{m},x)&=V_0^{m}+\int_0^{t\wedge\tau_{m}}\mu\left(S^{m}(u);x\right)du+\sum_{i\in\mathcal{I}_{m}}f_i(x)\sum_{j\leq i}\int_0^{t\wedge\tau_{m}}d_{ij}\left(S^{m}(u)\right)dW^j(u),
\end{split}
\end{equation}
where $W^j,\ j\in\N,$ and $Z$ are independent Brownian motions and $\tau_{m}:=\inf\left\{t\geq0:\ B^m(t)\geq m\right\}\wedge T$.
\end{thm}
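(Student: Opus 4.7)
The plan is to work with the auxiliary sequence $\tilde{S}^{(n),m}$, which lives on $E_m$ and satisfies the SDE
\[
\tilde{S}^{(n),m}(t) = S^{(n),m}_0 + \int_0^t G^{(n)}_m\!\left(\tilde{S}^{(n),m}(u-)\right)dY^{(n)}(u), \qquad t\in[0,T],
\]
and then transfer the conclusion to $S^{(n),m}$ using the fact that $\tilde{S}^{(n),m}\equiv S^{(n),m}$ on $[0,\tau^{(n)}_m]$. The backbone is the stability theorem for $L^2(\R_+)^\#$-semimartingale driven SDEs in \cite{KP2}, whose three structural hypotheses are supplied directly by the preceding sections: joint weak convergence with uniform tightness of the integrators $Y^{(n)}=(Z^{(n)},W^{(n)},t)\Rightarrow Y=(Z,W,t)$ from Theorem \ref{Y} (uniform tightness being immediate from the deterministic bounds $\langle Z^{(n)}\rangle_t\approx t$ and $\langle W^{(n)}(\varphi)\rangle_t\approx t\|\varphi\|^2_{L^2}$); uniform compactness of the range $\{G^{(n)}_m(s):n\in\N,\,s\in E_m\}$ in $\hat{E}$ from Theorem \ref{compact3}; and uniform convergence $G^{(n)}_m\to G_m$ on $E_m$ from Theorem \ref{Fconv} together with continuity of $G_m$ along uniformly convergent sequences in $D([0,T];E_m)$ from Theorem \ref{cts}. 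Applying \cite{KP2} yields relative compactness of $\{\tilde{S}^{(n),m}\}_n$ in $D([0,T];E_m)$ and, for every accumulation point $\tilde{S}^m$,
\[
\tilde{S}^m(t) = S^m_0 + \int_0^t G_m\!\left(\tilde{S}^m(u)\right) dY(u), \qquad t\in[0,T].
\]

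Next I would transfer back to $S^{(n),m}$ through a Skorohod coupling along a convergent subsequence. The stopping times $\tau^{(n)}_m=\inf\{t:\tilde{B}^{(n),m}(t)\geq m\}\wedge T$ converge a.s.\ to $\tau_m=\inf\{t:\tilde{B}^m(t)\geq m\}\wedge T$ on the event that the limiting first component $\tilde{B}^m$ does not stick at the level $m$ from below, which has full probability because $\tilde{B}^m$ is a continuous semimartingale with non-degenerate diffusion coefficient $r$ by \eqref{pricenonconstant}. Hence $S^{(n),m}=\tilde{S}^{(n),m}(\cdot\wedge\tau^{(n)}_m)$ converges in law to $\tilde{S}^m(\cdot\wedge\tau_m)=:S^m$, and stopping the limiting SDE at $\tau_m$ and unpacking the block structures $Y=(Z,W,t)$ and $G=(r,0,p,0,\sum_i\sum_{j\leq i}d_{ij}f_i\otimes f_j,\mu)$ produces the explicit representation claimed in the statement: $B^m$ driven by the standard Brownian motion $Z$ with drift $p$ and volatility $r$, and $V^m(\cdot,x)$ driven by the one-dimensional Brownian motions $W^j:=W(f_j,\cdot)$, which are independent by the polarisation identity in Theorem \ref{spacetime}, with coefficients $d_{ij}(S^m(\cdot))$ summed over $i\in\mathcal{I}_m$ and $j\leq i$.

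The main obstacle I anticipate is the clean verification of the Kurtz--Protter hypotheses in the present setting, where $Y^{(n)}$ is a compound $\R\times L^2(\R_+)^\#\times\R$-valued integrator and the coefficient functions take values in the bundled Hilbert space $\hat{E}$. The delicate point is that the uniform compactness of the integrand range must dovetail with the continuity of $G_m$ so that $G^{(n)}_m(\tilde{S}^{(n),m}(u-))\to G_m(\tilde{S}^m(u))$ in a sense strong enough to survive integration against the $\hat{E}$-valued limit; Theorems \ref{compact3}, \ref{Fconv}, and \ref{cts} were set up in Section~\ref{integrals} precisely for this purpose. A secondary, but non-trivial, technicality is the identification of the stopping time in the limit, which is what forces the non-degeneracy condition \eqref{pricenonconstant} to persist all the way into this step.
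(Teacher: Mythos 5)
Your proposal is correct and follows essentially the same route as the paper: relative compactness of the auxiliary sequence $\tilde{S}^{(n),m}$ via Theorem 7.6 of \cite{KP2} (with the hypotheses supplied by Theorems \ref{Y}, \ref{ut}, \ref{compact3}, \ref{Fconv}, and \ref{cts}), identification of the limit of the stopped processes using Theorem 5.5 of \cite{KP2}, and the observation that the first-passage functional is a.s.\ continuous at the limit thanks to (\ref{pricenonconstant}) — the paper phrases this last step via the continuous mapping theorem applied to $\tau^{(n)}_m = h_m\bigl(\tilde{S}^{(n),m}\bigr)$ rather than your Skorohod-coupling/non-sticking formulation, but the content is identical (note only that (\ref{pricenonconstant}) gives $r+p^+>\eta$, so non-degeneracy of the diffusion coefficient alone is not what prevents sticking; it is the alternative "either $r$ bounded below or drift strictly positive"). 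No gaps.
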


For the proof we will apply Theorem 7.6 of \cite{KP2} and also partially follow the idea of the proof of Theorem 5.4 in \cite{KP1}. However, note that there is a crucial difference between our Theorem \ref{main} and Theorem 5.4 in \cite{KP1}: while in \cite{KP1} a local convergence result is derived by stopping the process appropriately and thereby localizing it in time, we do not only localize in time, but in fact have to localize in space as well. 

\begin{proof}
Let us fix $m\in\N$. First, we will show that the sequence $\left(S^{(n),m}_0,\tilde{S}^{(n),m},Y^{(n)}\right)_{n\in\N}$ is relatively compact. To do this we will apply Theorem 7.6 in \cite{KP2}. Let us verify the conditions of Theorem 7.6 in \cite{KP2}: Corollary \ref{Y} and Theorem \ref{ut} show that $\left(Y^{(n)}\right)_{n\in\N}$ is uniformly tight and converges weakly to $Y$ in terms of finite dimensional distributions. Moreover, by Assumption \ref{initial} there exists $S^{m}_0\in E_{m}$ such that $S_0^{(n),m}\ra S_0^m$. Hence, $\left(S^{(n),m}_0,Y^{(n)}\right)\RA(S_0^{m},Y)$. Theorems \ref{Fconv} and \ref{cts} imply that $G^{(n)}_{m},\ n\in\N,$ and $G_{m}$ satisfy
Condition C.2 of \cite{KP2}. Moreover, the compactness condition follows from Theorem \ref{compact3} and we clearly have $\sup_n\sup_{s\in E_{m}}\left\Vert G^{(n)}_{m}(s)\right\Vert_{\hat{E}}<\infty$, due to Assumption \ref{prob21}, Lemma \ref{mu}, and Lemma \ref{sigma2}. Hence, the requirements of Theorem 7.6 in \cite{KP2} are satisfied and we may conclude that the sequence $\left(S_0^{(n),m},\tilde{S}^{(n),m},Y^{(n)}\right)_{n\in\N}$ is relatively compact.

Next note that $\tau_{m}^{(n)}$ is a measurable function of $\tilde{S}^{(n),m}$ for all $n\in\N$, say $\tau_{m}^{(n)}=h_{m}\left(\tilde{S}^{(n),m}\right)$. We denote by $D_{h_{m}}$ the set of discontinuities of $h_{m}$. Then equation (\ref{pricenonconstant}) of Assumption \ref{prob1} ensures that \mbox{$\p(S^{m}\in D_{h_{m}})=0$} for any limit point $S^m$ of $\tilde{S}^{(n),m}$ and 
 we may conclude by the continuous mapping theorem that the sequence 
$\left(S^{(n),m}_0,\tilde{S}^{(n),m}\left(\cdot\wedge\tau^{(n)}_{m}\right),\tau^{(n)}_{m},Y^{(n)}\right)_{n\in\N}$ is also relatively compact. Let $\left({S}^{m}_0,\hat{S}^{m},\tau_{m}^0,Y\right)$ denote a weak limit point of that sequence. 
Then Condition C.2 together with Theorem 5.5 in \cite{KP2} yields that along a subsequence,
\[S^{(n),m}_0+\int_0^\cdot G^{(n)}_{m}\left(\tilde{S}^{(n),m}\left(u\wedge\tau^{(n)}_{m}\right)\right)dY^{(n)}(u)\ \RA\  S_0^{m}+\int_0^\cdot G_{m}\left(\hat{S}^{m}(u)\right)dY(u).\]
Furthermore, as remarked earlier $\tilde{S}^{(n),m}$ and $S^{(n),m}$ agree on $\left[0,\tau^{(n)}_{m}\right]$. Thus, by definition
\[S^{(n),m}\left(t\right)=\tilde{S}^{(n),m}\left(t\wedge\tau_m^{(n)}\right)=S^{(n),m}_0+\int_0^{t\wedge\tau^{(n)}_{m}} G^{(n)}_{m}\left(\tilde{S}^{(n),m}\left(u\right)\right)dY^{(n)}(u),\quad t\in[0,T].\]
Since $\hat{\tau}_{m}:=h_{m}\left(\hat{S}^{m}\right)\leq \tau_{m}^0$ a.s.~and since $G_m\left(\hat{S}_m(u)\right)=G^m\left(\hat{S}_m(u)\right)$ for $u\leq\hat{\tau}_m$, we conclude that $\left(S^{(n),m}\right)_{n\in\N}$ is relatively compact and that any limit point $\hat{S}^{m}$ of $\left(S^{(n),m}\right)_{n\in\N}$ gives a local solution of (\ref{ISDE}).
\end{proof}

\subs{Local weak convergence}

So far we have shown that the sequence of our LOB model dynamics is relatively compact in a localized sense and that any accumulation point solves a certain infinite dimensional SDE. If the limiting SDE admits a unique strong solution, then the LOB dynamics converges to a unique limit as shown by the following theorem.  

\begin{thm}\label{main4}
Suppose that all the assumptions of Theorem \ref{main} are satisfied and that for all $m\in\N$ there exists a unique strong solution $\hat{S}^m=(\hat{B}^m,\hat{V}^m)$ of
\begin{equation}\label{SDEmstopped}
\hat{S}^m(t)=S_0^m+\int_0^{t\wedge\tau_{m,m}}G_m\left(\hat{S}^m(u)\right)dY(u),\quad t\in[0,T],\quad\tau_{m,l}:=\inf\{t\geq0: \hat{B}^m(t)\geq l\}\wedge T.
\end{equation}
Then there exists a unique global solution $S=(B,V)$ of (\ref{ISDE}) and for all $m\in\N$, 
\[S^{(n),m}\RA S^{m}\quad\text{in}\quad\mathcal{D}\left([0,T]; E\right),\]
where $S^{m}(t):=\left(B\left(t\wedge\tau_{m}\right),V\left(t\wedge\tau_{m}\right)\1_{[0,m]}\right),\ t\in[0,T],$ and $\tau_{m}:=\inf\left\{t\geq0:\ B(t)\geq m\right\}\wedge T$.
\end{thm}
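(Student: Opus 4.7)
My approach proceeds in three stages: upgrade Theorem~\ref{main}'s subsequential compactness to full weak convergence for each fixed $m$; patch the level-$m$ limits into a global process $S$; and verify non-explosion together with uniqueness.

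For fixed $m\in\N$, Theorem~\ref{main} already shows that $(S^{(n),m})_n$ is relatively compact in $\mathcal{D}([0,T];E)$ and that every subsequential weak limit, taken jointly with its driving noise, is a local solution $(\cdot,\tau_m,m)$ of (\ref{ISDE}), hence a weak solution of (\ref{SDEmstopped}) with initial condition $S_0^m$. The hypothesised pathwise uniqueness of (\ref{SDEmstopped}), combined with the existence of a strong solution, will yield uniqueness in law via a Yamada--Watanabe-type argument. Every subsequential weak limit therefore has the law of $\hat S^m$, and relative compactness upgrades this to $S^{(n),m}\RA\hat S^m$ along the full sequence.

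To assemble $\{\hat S^m\}_{m}$ into a single global process I will use the following compatibility at different levels $l\geq m$. Since $\mathcal{I}_m\subset\mathcal{I}_l$ and every basis function $f_i$ with $i\in\mathcal{I}_l\setminus\mathcal{I}_m$ is supported outside $[0,m)$, projecting the $\hat V^l$--equation onto $[0,m]$ annihilates exactly those terms. Together with the fact (readable off Assumptions~\ref{prob22}, \ref{gii}, \ref{hii}) that $p,r,\mu_i,d_{ij}$ depend on $v$ only through $v\1_{[0,b]}$, this shows that the process
\[
\left(\hat B^l(\cdot\wedge\tau_{l,m}),\ \hat V^l(\cdot\wedge\tau_{l,m})\1_{[0,m]}\right)
\]
is itself a strong solution of (\ref{SDEmstopped}) at level $m$, with its own first-hitting time of level $m$ equal to $\tau_{l,m}$. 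Pathwise uniqueness then forces it to coincide with $\hat S^m$, so $\tau_{l,m}=\tau_{m,m}$ a.s.~and $\hat B^l=\hat B^m$, $\hat V^l\1_{[0,m]}=\hat V^m\1_{[0,m]}$ on $[0,\tau_{m,m}]$. The definitions $B(t):=\hat B^l(t)$ and $V(t)\1_{[0,m]}:=\hat V^l(t)\1_{[0,m]}$ for any $l\geq m$ with $t<\tau_{l,l}$ then produce an unambiguous process $S=(B,V)$.

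To extend $S$ to the whole interval $[0,T]$ I need $\tau_{l,l}\uparrow T$ a.s., i.e.~non-explosion of the price. The linear-growth bound $|p(s)|+r(s)\leq C(1+|b|)$ of Assumption~\ref{prob21} permits a standard Burkholder--Davis--Gundy plus Gr\"onwall estimate on $\hat B^l(\cdot\wedge\tau_{l,l})$ to give $\E\sup_{t\leq T}|\hat B^l(t\wedge\tau_{l,l})|^2\leq K$ uniformly in $l$, and Markov's inequality then yields $\p(\tau_{l,l}<T)\leq K/l^2\to0$. The resulting $S$ solves (\ref{ISDE}) by construction, and uniqueness is immediate: any other global solution $\tilde S$, stopped at its own $\tau_m$, solves (\ref{SDEmstopped}) and hence agrees with $\hat S^m$ for every $m$, so $\tilde S=S$. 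Combined with the first stage this gives $S^{(n),m}\RA S^m$ in $\mathcal{D}([0,T];E)$. The main obstacle I anticipate is the Yamada--Watanabe step in the first stage: the equation (\ref{SDEmstopped}) is infinite-dimensional, driven by a mixed standard plus cylindrical Brownian motion $Y$, and stopped at a state-dependent time, so the step from pathwise to distributional uniqueness requires more than a bare citation; the patching compatibility of stage two and the non-explosion estimate of stage three are, by comparison, routine.
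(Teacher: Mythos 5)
Your proposal is correct and follows essentially the same route as the paper: patch the level-$m$ strong solutions together via the consistency forced by Assumptions \ref{prob22}, \ref{gii}, \ref{hii}, prove non-explosion of the price by a Gr\"onwall estimate from the linear growth in Assumption \ref{prob21}, and then conclude full weak convergence from the relative compactness and limit-point identification of Theorem \ref{main} combined with uniqueness. The one place you are more explicit than the paper is the Yamada--Watanabe step needed to pass from strong (pathwise) uniqueness to uniqueness in law of the limit points; the paper subsumes this in the final sentence ``the weak convergence result follows from Theorem \ref{main},'' so flagging it as the genuine technical obstacle is a fair and useful observation.
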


\begin{proof}
Strong uniqueness implies together with Assumptions \ref{prob2}, \ref{g}, and \ref{h} that for all $m,k\in\N$, $\hat{S}^m$ equals $\left(\hat{B}^{m+k},\hat{V}^{m+k}\1_{[0,m]}\right)$ almost surely on the interval $\left[0,\tau_{m,m}\wedge\tau_{m+k,m}\right]$. Thus for all $m,k\in\N$, $\tau_{m,m}=\tau_{m+k,m}$ and hence $\tau_{m,m}\leq \tau_{m+k,m+k}$ a.s.
Setting $\tau_0^0:=0$ we define
\[B(t):=\sum_{m=1}^\infty\1_{\left[\tau_{m-1,m-1},\tau_{m,m}\right)}(t)\hat{B}^m(t),\quad t\in[0,T],\]
and for all $m\in\N$ and $x\in[m-1,m)$,
\[V(t,x):=\1_{\left[0,\tau_{m,m}\right)}(t)\hat{V}^{m}(t,x)+\sum_{k=1}^\infty\1_{\left[\tau_{m+k-1,m+k-1},\tau_{m+k,m+k}\right)}(t)\hat{V}^{m+k}(t,x),\quad t\in[0,T].\]
Let $\tau_m:=\inf\{t\geq0:\ B(t)\geq m\}$ and $\tau_\infty:=\lim\tau_m$. Then by the linear growth condition of Assumption \ref{prob21} we have for all $m\in\N$ and $t\in[0,T]$,
\begin{eqnarray*}
\E\left[ B\left(t\wedge\tau_{m}\right)\right]^2&\leq& 4\left[B_0^2+\E\left(\int_0^t\left|p\left(S(u\wedge\tau_{m})\right)\right|du\right)^2+\E\left(\int_0^tr\left(S(u\wedge\tau_m)\right)dZ_u\right)^2\right]\\
&\leq& 4\left[B_0^2+T\int_0^t\E\left|p\left(S(u\wedge\tau_{m})\right)\right|^2du+\int_0^t\E\left|r\left(S(u\wedge\tau_m)\right)\right|^2du\right]\\
&\leq& 4B_0^2+2(4T+1)K^2\int_0^t\left(1+\E\left[ B\left(u\wedge\tau_{m}\right)\right]^2\right)du,
\end{eqnarray*}
which implies by Gronwall's inequality that
\[\E\left[ B\left(T\wedge\tau_\infty\right)\right]^2\leq\liminf_m\E\left[ B\left(T\wedge\tau_{m}\right)\right]^2<\infty.\]
Therefore $\tau_\infty=T$ a.s.~and, since $G^m\left(S(u)\right)=G_m\left(S(u)\right)$ on $\{u\leq \tau_{m}\}$ for all $m\in\N$, $S:=(B,V)$ defines a global solution of $(\ref{ISDE})$, which must be unique as well.
Now the weak convergence result follows from Theorem \ref{main}.
\end{proof}

\subsubsection{Uniqueness}
We are now going to analyse two classes of models which fit in the framework developed so far and which satisfy the assumptions of Theorem \ref{main4}, i.e.~they converge - in a local sense -  in the scaling limit to the unique solution of the infinite dimensional SDE (\ref{ISDE}). For this it is sufficient to establish the local Lipschitz continuity of the coefficient function $G_{m}$ on $E_m$ for all $m\in\N$, so that (\ref{SDEmstopped}) will have a unique solution for all $m\in\N$. Note that $G^1_{m},\ G^3_{m}$, and $G^6_{m}$ are locally Lipschitz continuous and uniformly bounded on $E_{m}$ by Assumption \ref{prob2} and Lemmata \ref{c4} and \ref{mu}. Hence, it remains to establish the local Lipschitz continuity of $G^5_{m}$.

\begin{lem}\label{main3}
Suppose in addition to the assumptions of Theorem \ref{main} that $g(s;y)$ is independent of the state of the book for all $y\in\R_+$. Then each $G_m$ is locally Lipschitz continuous and the conditions of Theorem \ref{main4} are satisfied. 
\end{lem}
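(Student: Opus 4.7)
The plan is to exploit the state-independence of $g$ to collapse the volatility coefficients to constants, so that the only state-dependence left in $G_m$ lies in the drift and in the one-dimensional functions $p,r$ — all of which are already known to be locally Lipschitz.

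First I would unpack the definitions. Since $g(s;y) = g(y)$ is independent of $s$, the formulas
\[
\sigma_i^2(s) = \int_0^\infty g(y)\,[F_i(y)]^2\,dy, \qquad \sigma_i(s)\sigma_j(s)\rho_{ij}(s) = \int_0^\infty g(y)\,F_i(y)F_j(y)\,dy
\]
show that each $\sigma_i$ and each $\rho_{ij}$ is a constant. The Cholesky recursion from Appendix \ref{decomposition} determines $c_{ij}$ uniquely as a measurable function of the $\rho_{kl}$ and of the (strictly positive) diagonal entries $c_{ii}$, so the $c_{ij}$ are constants too. Hence $d_{ij} = \sigma_i c_{ij}$ is constant in $s$, and $G^{5}_m$ is a constant element of $L^2([0,m]^2)$. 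In particular $G^5_m$ is trivially (locally) Lipschitz with constant zero, and the compactness/boundedness bound of Lemma \ref{sumc} still guarantees that it lies in the appropriate space.

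Next I would assemble local Lipschitz continuity of $G_m$ on $E_m$ by combining the pieces already available in the paper: $G^1_m$ and $G^3_m$ are locally Lipschitz on bounded subsets of $E_m$ by Assumption \ref{prob22} (see the estimate recorded at the beginning of Subsection \ref{continuity}); $G^6_m$ is locally Lipschitz by Lemma \ref{c4}; and $G^5_m$ is constant by the previous step. Summing the three contributions in the norm on $\hat E$ yields, for every $c > 0$ and $m\in\N$, a constant $L^m_c$ with
\[
\left\Vert G_m(s)-G_m(\tilde s)\right\Vert_{\hat E} \leq L^m_c\,\left\Vert s-\tilde s\right\Vert_E
\]
for all $s,\tilde s\in E_m$ with $\|s\|_E,\|\tilde s\|_E\leq c$.

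Finally I would deduce existence and uniqueness of a strong solution to the stopped equation (\ref{SDEmstopped}). Because $\tau_{m,m}$ keeps $\hat B^m$ bounded by $m$, Assumption \ref{prob21} gives uniform boundedness of $G^1_m$ and $G^3_m$ along the solution; Lemma \ref{mu} gives uniform boundedness of $G^6_m$; and $G^5_m$ is constant. Combined with the local Lipschitz property just proved, this puts (\ref{SDEmstopped}) into the classical framework for SDEs driven by a standard and a cylindrical Brownian motion with locally Lipschitz, linearly growing coefficients — a Picard/truncation argument up to a sequence of stopping times $\sigma_k = \inf\{t\colon \|\hat S^m(t)\|_E \geq k\}\wedge\tau_{m,m}$ produces a unique strong solution on each $[0,\sigma_k]$, and the uniform a priori bounds prevent explosion before $\tau_{m,m}$. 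The hypotheses of Theorem \ref{main4} are therefore satisfied.

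The only genuinely nontrivial step is the first one, namely verifying that state-independence of $g$ propagates through the Cholesky algorithm to make every $d_{ij}$ constant; once this is in hand, the remaining bookkeeping is essentially a direct appeal to results already proved in Sections \ref{volume} and \ref{integrals}.
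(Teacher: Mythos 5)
Your proposal is correct and follows essentially the same route as the paper: the paper's proof likewise observes that state-independence of $g$ makes $G_m^5$ constant (hence trivially Lipschitz and uniformly bounded), while the remaining components $G^1_m$, $G^3_m$, $G^6_m$ are already known to be locally Lipschitz from Assumption \ref{prob2} and Lemma \ref{c4}. The only difference is the final existence/uniqueness step, where the paper simply cites Corollary 7.8 of \cite{KP2} for the unstopped equation (\ref{SDEm}) and then passes to (\ref{SDEmstopped}), whereas you sketch the underlying Picard/truncation argument directly; both are standard and your localization via the stopping times $\sigma_k$ is a legitimate substitute for that citation.
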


\begin{proof}
Since in this case $G_m^5$ does not depend on $s$, $G_m^5$ is trivially Lipschitz continuous and uniformly bounded on $E_m^*$. Therefore there exists a unique strong solution of
\begin{equation}\label{SDEm}
\tilde{S}^m(t)=S_0^m+\int_0^tG_m\left(\tilde{S}^m(u)\right)dY(u),\quad t\in[0,T],
\end{equation}
by Corollary 7.8 in \cite{KP2}. Hence, there exists a unique strong solution of (\ref{SDEmstopped}) as well.
\end{proof}

The next lemma allows the volatility of the cumulated volume process to be state dependent. However, we require the dynamics of the system to only depend on current volumes through some approximation of the cumulated volume function. 

For all $l_0,m\in\N$ we define the index sets $\mathcal{I}_m(l_0):=\left\{i\in\mathcal{I}_m:\ l(i)<l_0\right\}$ and $\mathcal{I}(l_0):=\left\{i\in\N:\ l(i)<l_0\right\}$.

\begin{ass}\label{class}
There is $l_0\in\N$ such that for all pairs $s=(b,v)$, $\tilde{s}=(\tilde{b},\tilde{v})\in E_{loc}$ satisfying $b=\tilde{b}$ and $\langle v,f_i\rangle=\langle\tilde{v},f_i\rangle\  \forall\ i\in \mathcal{I}(l_0)$, we have the equalities
$p^{(n)}(s)=p^{(n)}(\tilde{s}),\ q^{(n)}(s)=q^{(n)}(\tilde{s})$, $h^{(n)}(s;y)=h^{(n)}(\tilde{s};y),\ g^{(n)}(s;y)=g^{(n)}(\tilde{s};y)$ for all $n\in\N,\  y\in\R_+$.
\end{ass}

\begin{lem}\label{main2}
Let the assumptions of Theorem \ref{main} and Assumption \ref{class} be satisfied. Then there exists a unique strong solution of $(\ref{SDEmstopped})$ for each $m\in\N$. 
\end{lem}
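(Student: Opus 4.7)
The plan is to exploit Assumption \ref{class} to collapse (\ref{SDEmstopped}) into a finite-dimensional SDE on $\R^{d_m}$ with $d_m:=1+|\mathcal{I}_m(l_0)|$. Since every $s=(b,v)\in E_m$ has $\langle v,f_i\rangle=0$ for $i\notin\mathcal{I}_m$ and $\mathcal{I}_m(l_0)$ is finite, Assumption \ref{class} implies that each of $p,r,g,h$, and therefore also $\mu_i,\sigma_i,\rho_{ij},c_{ij},d_{ij}$ for $i,j\in\mathcal{I}_m$ (via their defining integrals and the recursive construction of Appendix \ref{decomposition}), factors through the finite-dimensional projection
\[\Pi_m:E_m\ra\R^{d_m},\qquad\Pi_m(b,v):=\bigl(b,(\langle v,f_i\rangle)_{i\in\mathcal{I}_m(l_0)}\bigr).\]
That is, there exist Borel functions $\bar p,\bar r,\bar\mu_i,\bar d_{ij}$ on $\R^{d_m}$ with $p(s)=\bar p(\Pi_m(s))$, and similarly for the remaining coefficients, for all $s\in E_m$.

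The next step is to solve the finite-dimensional SDE
\begin{align*}
X_0(t)&=B_0+\int_0^{t\wedge\tau}\bar p(X(u))\,du+\int_0^{t\wedge\tau}\bar r(X(u))\,dZ(u),\\
X_i(t)&=\langle V_0,f_i\rangle+\int_0^{t\wedge\tau}\bar\mu_i(X(u))\,du+\sum_{j\leq i}\int_0^{t\wedge\tau}\bar d_{ij}(X(u))\,dW^j(u),\quad i\in\mathcal{I}_m(l_0),
\end{align*}
stopped at $\tau:=\inf\{t\geq 0:X_0(t)\geq m\}\wedge T$ and driven by the finite family $\{W^j\}_{j\leq\max\mathcal{I}_m(l_0)}$ of standard Brownian motions extracted from the cylindrical motion $W$. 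Because $\bigl\|\sum_{i\in\mathcal{I}_m(l_0)}x_if_i\bigr\|_{L^2}^2=\sum_ix_i^2$, the local Lipschitz bounds of Assumptions \ref{prob22}, \ref{gii}, \ref{hii} together with Lemmata \ref{c4} and \ref{dLip} directly translate into local Lipschitz continuity of the $\bar p,\bar r,\bar\mu_i,\bar d_{ij}$ on $\R^{d_m}$. Classical finite-dimensional SDE theory then yields a pathwise unique strong solution $X$.

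Given $X$, I define the remaining Haar coefficients for $i\in\mathcal{I}_m\setminus\mathcal{I}_m(l_0)$ by
\[X_i(t):=\langle V_0,f_i\rangle+\int_0^{t\wedge\tau}\bar\mu_i(X(u))\,du+\sum_{j\leq i}\int_0^{t\wedge\tau}\bar d_{ij}(X(u))\,dW^j(u),\]
and set $\hat V^m(t,\cdot):=\sum_{i\in\mathcal{I}_m}X_i(t)f_i$ and $\hat S^m:=(X_0,\hat V^m)$. That the series converges in $L^2([0,m])$ uniformly in $t$ (so that $\hat V^m$ is a continuous $L^2([0,m])$-valued process) follows from Doob's maximal inequality together with the uniform summability $\sum_{i\in\mathcal{I}_m}\bar\mu_i^2,\ \sum_{i\in\mathcal{I}_m}\bar\sigma_i^2<\infty$ provided by Lemmata \ref{mu} and \ref{sigma2}. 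Since $G_m(\hat S^m(u))$ depends on $\hat S^m(u)$ only through $\Pi_m(\hat S^m(u))=X(u)$, the process $\hat S^m$ satisfies (\ref{SDEmstopped}). Uniqueness is the mirror image: any strong solution projects under $\Pi_m$ to a strong solution of the finite-dimensional SDE, and hence its $\mathcal{I}_m(l_0)$-components are uniquely determined; the remaining Haar coefficients are then pinned down by the same stochastic integral formulae.

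The main technical obstacle is checking that $\hat V^m$ indeed takes values in $L^2([0,m])$ with the required path regularity; this is precisely where the uniform summability of Section \ref{musigma} and the compactness of the coefficient functions established in Section \ref{vol-operator} are essential, since a naive pointwise construction would not automatically produce an $E_m$-valued process.
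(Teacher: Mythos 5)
Your proposal is correct and follows essentially the same route as the paper: both reduce the problem via Assumption \ref{class} to a finite-dimensional SDE in the price and the Haar coefficients indexed by $\mathcal{I}_m(l_0)$, solve it using the local Lipschitz bounds from Assumption \ref{prob2} and Lemmata \ref{c4} and \ref{dLip}, and then reconstruct the remaining coefficients $i\in\mathcal{I}_m$ by integrating against that finite-dimensional solution, with uniqueness following because any solution projects onto the finite-dimensional one. The only (immaterial) differences are that the paper first solves the unstopped equation (\ref{SDEm}) and stops afterwards, and that you make the $L^2([0,m])$-convergence of the reconstructed series explicit, which the paper leaves implicit.
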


\begin{proof}
Fix $m\in\N$. We first show that there exists a unique strong solution to (\ref{SDEm}). Note that $p$ and $r$ are Lipschitz continuous on $E_m$ by Assumption \ref{prob2} and it follows from Lemmata \ref{c4} and \ref{dLip} that $d_{ij}$ and $\mu_i$ are also locally Lipschitz continuous on $E_m$. Moreover, each $\mu_i$ resp.~$d_{ij}$ is uniformly bounded and $p$ and $r$ satisfy a linear growth condition by Assumption \ref{prob21}. Therefore, the {\it finite} dimensional SDE 
\begin{eqnarray*}
\ol{B}^m(t)&=&B_0+\int_0^tp\left(\ol{S}^m(u)\right)du+\int_0^tr\left(\ol{S}^m(u)\right)dZ_u,\\
\ol{V}^m_i(t)&=&\langle V^m_0,f_i\rangle+\int_0^t\mu_i\left(\ol{S}^m(u)\right)du+\sum_{j\leq i}\int_0^td_{ij}\left(\ol{S}^m(u)\right)dW^j_u,\quad i\in \mathcal{I}_m(l_0),\\
\text{with}\quad \ol{V}^m&:=&\sum_{i\in\mathcal{I}_m(l_0)}\ol{V}^m_if_i\quad\text{and}\quad \ol{S}^m=\left(\ol{B}^m,\ol{V}^m\right)
\end{eqnarray*}
has a unique strong solution $\ol{S}^m$. Given this solution let us define
\[\ul{V}^m(t):=V^m_0+\sum_{i\in\mathcal{I}_m}f_i\int_0^t\mu_i\left(\ol{S}^m(u)\right)du+\sum_{i\in\mathcal{I}_m}f_i\sum_{j\leq i}\int_0^td_{ij}\left(\ol{S}^m(u)\right)dW^j_u,\quad t\in [0,T].\]
Clearly, $\left(\ol{B}^m,\ul{V}^m\right)$ is a solution of (\ref{SDEm}) due to Assumption \ref{class} and by construction it must be unique. It follows that $S^m:=\left(\ol{B}^m(\cdot\wedge\tau_m),\ul{V}^m(\cdot\wedge\tau_m)\right)$ is the unique strong solution of (\ref{SDEmstopped}). 
\end{proof}

\begin{rem}\label{final}
It is not clear to us how to establish the (strong or weak) uniqueness of a solution to the general infinite dimensional SDE of Theorem \ref{main} apart from the two cases considered in this section. Indeed, even though the Cholesky factorization in finite dimensions is a Lipschitz continuous operation, it is known that the Lipschitz constant grows dramatically when the dimension is increased. This makes the search for conditions on $g$ and $h$ that yield strong uniqueness very difficult. Of course, one could alternatively look for weak uniqeness of a solution to the infinite dimensional SDE by considering the associated martingale problem. However, to the best of our knowledge also in this case the problem is still unsolved and requires further research.
\end{rem}

\subsubsection{Examples}

We close this section with two examples where uniqueness of solutions to the limiting SDE can indeed be established.

\begin{ex}\label{ex1}
Let $\alpha,K,\eta,q>0$ and suppose that there exist $c_1,c_2\in(0,\infty)$ such that for all $n\in\N$ and $s=(b,v)\in E_{loc}$ with $0\leq b\leq c_1$ and $\left\Vert v\1_{[0,c_1]}\right\Vert_\infty<c_2$, the functions $p^{(n)}$ and $r^{(n)}$ are given by
\begin{eqnarray*}
p^{(n)}(s)&=&b\int^{b}_{(b-q)^+}\left(\alpha y-v(y)\right)dy+\eta\\
\left(r^{(n)}(s)\right)^2&=&\Delta x^{(n)}\eta+b^2.
\end{eqnarray*}
This specifies uniquely the conditional distribution of the process $B^{(n)}$ (as long as $S^{(n)}$ does not exit the $c_1$-$c_2$-interval defined above). We have chosen $r^{(n)}$ and $p^{(n)}$ such that the volatility as well as the absolute value of the drift of the price process are increasing in the price itself. Moreover, high volumes at the top of the book (compared to some reference level specified by $\alpha$) lead to a negative drift for the price process, while low volumes at the top of the book lead to a positive drift. In the scaling limit the price follows the volume-dependent, ``generalized Black-Scholes'' dynamics
\[dB(t)=\left(B(t)\int^{B(t)}_{(B(t)-q)^+}(\alpha y-v(y))dy+\eta\right)dt+B(t)dZ(t).\]
Order placements / cancelations outside the spread are assumed to be of unit size, i.e.~\mbox{$\p\left(\omega_k^{(n)}=\pm1\right)=1$} for all $n\in\N,\ k\leq T_n$. Furthermore, we suppose that there exist two functions $f^{(n)}_\pm: E_{loc}\times\R_+\ra\R_+$ for every $n\in\N$ such that for all $B\in\mathcal{B}(\R_+)$ and $k=1,\dots,T_n$,
\[\p\left(\left.\phi_k^{(n)}=C,\ \omega_k^{(n)}=\pm1,\ \pi_k^{(n)}\in B\ \right|\F_{k-1}^{(n)}\right)=\int_B f^{(n)}_\pm\left(S_{k-1}^{(n)};y\right)dy\quad \text{a.s.}\]
Let $h:\R\ra\R_+$ be continuously differentiable with bounded derivative and suppose that $h$ has compact support in $\R_-$. Let $D>0$ and suppose that the $f^{(n)}_\pm$ are for all $y\in\R_+$ and $s=(b,v)\in E_{loc}$ with $0\leq b\leq c_1$ and $\left\Vert v\1_{[0,c_1]}\right\Vert_\infty<c_2$ given by
\small
\begin{align*}
&f^{(n)}_+(s;y)=\left(1-\Delta p^{(n)}\left(r^{(n)}(s)\right)^2\right)\frac{\exp(-y)}{\left(2+\Delta v^{(n)}\right)}\left(1-\Delta v^{(n)}\left\langle v(\cdot+b)\1_{[-b,0]},h\right\rangle+\frac{\Delta v^{(n)}}{1+|y-b|}\right),\\
&f^{(n)}_-(s;y)=\left(1-\Delta p^{(n)}\left(r^{(n)}(s)\right)^2\right)\frac{\exp(-y)}{\left(2+\Delta v^{(n)}\right)}\left(1+\Delta v^{(n)}\left\langle v(\cdot+b)\1_{[-b,0]},h\right\rangle+\frac{\Delta v^{(n)}|y-b|}{1+|y-b|}\right).
\end{align*}\normalsize
This means that the location at which order placements and cancelations take place is exponentially distributed. Order cancelations are more likely to happen further away from the current best bid price or if cumulated volumes are quite high. On the other hand, order placements occur more frequently in the proximity of the current best bid price or if cumulated volumes are low. The above specification of $f^{(n)}$ yields for $s$ as above
\[g(s;y)=\frac{1}{2}\exp\left(-y\right)\]
and
\[h(s;y)=\frac{1}{2}\exp\left(-y\right)\left(-2\left\langle v(\cdot+b)\1_{[-b,0]},h\right\rangle+\frac{1-|y-b|}{1+|y-b|}\right).\]
Therefore, the covariance structure does not depend on $s$, which implies that $d_{ij}(s)=d_{ij}(\tilde{s})$ for all $s,\tilde{s}$ as above and $i,j\in\N$. However, note that $h$ and hence also $\mu$ depend on $s$. 
Moreover, one can check that for $n$ large enough all assumptions of Theorem \ref{main} are satisfied. Hence, Theorem \ref{main4} and Lemma \ref{main2} imply that the limiting SDE has a unique solution in this case and that $S^{(n)}$ converges weakly to this solution in a localized sense.
\end{ex}

While the above example shows that even with constant $G^5$ we can already model many interesting dependencies, one disadvantage is that the conditional distribution of the location variables $\pi_k^{(n)},\ n\in\N,\ k\leq T_n,$ of order placements resp.~cancelations cannot be taken to be relative to the current best bid price, which would be reasonable from a microeconomic point of view. Another disadvantage is that for constant $G^5$ the $L^2(\R_+)$-valued process $V$ is not necessarily positive respectively increasing in $x\in\R_+$. Nevertheless, for short time horizons Example \ref{ex1} can be viewed as a reasonable model of the bid side of a limit order book. 

The next example allows to model the location of order placements being distributed relative to the current best bid price.

\begin{ex}\label{ex2} 
For given $s=(b,v)\in E_{loc}$ we set
\[v_{l_0}(y):=\sum_{i\in\mathcal{I}(l_0)}\langle v,f_i\rangle f_i(y),\quad y\in\R_+.\]
Note that $v_{l_0}$ is the projection of $v$ on the subspace spanned by $\{f_i: i\in\mathcal{I}(l_0)\}$, which consists of all step functions on the grid $k2^{-l_0},\ k\in\N$. Hence, $v_{l_0}$ has the alternative representation
\[v_{l_0}(y)=\sum_{k\in\N_0} a_k\1_{[k2^{-l_0},(k+1)2^{-l_0})}(y)\quad\text{with}\quad a_k:=2^{-l_0}\int_{k2^{-l_0}}^{(k+1)2^{-l_0}}v(x)dx.\]
Therefore, $\left\{v_{l_0}(y): y\leq 2^{-l_0}\lfloor b2^{l_0}\rfloor\right\}$ only depends on $\left\{v(y): y\leq b\right\}$ for any $s=(b,v)\in E_{loc}$. Similarly to Example \ref{ex1} we specify the price dynamics as follows : let $\alpha,K,\eta,c_1,c_2>0$, $q\geq 2^{-l_0}$ and suppose that for all $n\in\N$ and $s=(b,v)\in E_{loc}$ with $0\leq b\leq c_1$ and $\left\Vert v_{l_0}\1_{[0,c_1]}\right\Vert_\infty<c_2$,
\begin{eqnarray*}
p^{(n)}(s)&=&b\int^{\left\lfloor b2^{l_0}\right\rfloor 2^{-l_0}}_{(b-q)^+}\left(\alpha y-v_{l_0}(y)\right)dy+\eta\\
\left(r^{(n)}(s)\right)^2&=&\Delta x^{(n)}\eta+b^2.
\end{eqnarray*}
As in Example \ref{ex1} we suppose that there exists a function $f^{(n)}$ for every $n\in\N$  such that for all $A\in\mathcal{B}([-M,M])$, $B\in\mathcal{B}(\R_+)$, and $k=1,\dots,T_n$,
\[\p\left(\left.\phi_k^{(n)}=C,\ \omega_k^{(n)}\in A,\ \pi_k^{(n)}\in B\ \right|\F_{k-1}^{(n)}\right)=\int_B\int_A f^{(n)}\left(S_{k-1}^{(n)};x,y\right)dxdy\quad \text{a.s.}\]
For $s=(b,v)\in E_{loc}$ with $0\leq b\leq c_1$ and $\left\Vert v_{l_0}\1_{[0,c_1]}\right\Vert_\infty<c_2$ let
\[f^{(n)}(s;x,y):=C_n(s)f^{(n),1}(v;x,y)\exp\left(-\frac{1}{2}(y-b)^2\right),\]
where $C_n(s)$ is chosen such that $\int_0^\infty\int_{-M}^Mf^{(n)}(s;x,y)dxdy=1-\Delta p^{(n)}\left(r^{(n)}(s)\right)^2$. It can be shown that as $n\ra\infty$, $C_n(s)$ converges to a function $C(b)$ depending on $b$ only. 
The function $f^{(n),1}$ specifies the conditional distribution of the order size and is given by
\[f^{(n),1}(v;x):=\frac{1-a_n(v)}{M}\1_{[0,M]}(x)+\frac{a_n(v)}{M}\1_{\left[-M,0\right]}(x)\]
with 
\[a_n(v):=\frac{1}{2}-\Delta v^{(n)}\left\langle v_{l_0}\left(\cdot+\lfloor b2^{l_0}\rfloor 2^{-l_0}\right)\1_{\left[-\lfloor b2^{l_0}\rfloor 2^{-l_0},0\right]},h\right\rangle,\]
where $h$ is as in Example \ref{ex0}. In this case
\[g(s;y)=\frac{M^2}{3}C(b)\exp\left(-\frac{1}{2}(y-b)^2\right)\]
as well as
\[h(s;y)=MC(b)\exp\left(-\frac{1}{2}(y-b)^2\right)\left\langle v_{l_0}\left(\cdot+\lfloor b2^{l_0}\rfloor 2^{-l_0}\right)\1_{[-\lfloor b2^{l_0}\rfloor 2^{-l_0},0]},h\right\rangle\]
both depend on $s$. Hence, also the $d_{ij},\ i,j\in\N,$ will vary with $s$. Still, it can be easily checked that all assumptions of Lemma \ref{main2} and Theorem \ref{main4} are satisfied.
\end{ex}

\appendix

\renewcommand{\theequation}{\thesection.\arabic{equation}}
\setcounter{equation}{0}

\section{Orthogonal decomposition of sequences of random variables}\label{decomposition}

In this appendix we derive an orthogonal decomposition result for sequences of random variables. Specifically, on a probability space $(\Omega, {\mathcal F}, \mathbb{P})$ we consider a sequence of normalised random variables $Z^i$ and denote by $\rho_{ij}$ the correlation between the variables $Z^i$ and $Z^j$ $(i,j \in \N, j \leq i)$. In terms of these quantities we define an array of real numbers $c_{ij},\ j\leq i,$ as well as a sequence of random variables $ W^{i},\ i\in\N$, via the following algorithm:
\begin{align*}
&\text{Put }c_{11}:=1\quad\text{and}\quad W^{1}:=Z^{1}.\\
&\text{For }i=2,3,4,\dots:\\
&\qquad\text{For }j=1,2,\dots,i-1:\\
&\qquad\qquad\text{If }c_{jj}=0,\\
&\qquad\qquad\quad\text{then } c_{ij}:=0.\\
&\qquad\qquad\text{Else }\\
&\qquad\qquad\quad c_{ij}:=\frac{1}{c_{jj}}\left(\rho_{ij}-\sum_{l<j}c_{il}c_{jl}\right).\\
&\qquad\text{Next $j$}.\\
&\qquad c_{ii}:=\left(1-\sum_{j<i}\left(c_{ij}\right)^2\right)^{1/2}\\
&\qquad  W^{i}:=\begin{cases}\frac{1}{c_{ii}}\left(Z^{i}-\sum_{j<i}c_{ij} W^{j}\right)&:\ c_{ii}\>0\\U^{i}&:\ c_{ii}=0\end{cases}.\\
&\text{Next }i.
\end{align*}

\begin{lem}\label{drei} For all $n,i\in\N$, $j\leq i$, the following holds:
\begin{align*}
\text{1.  }\ \E\left(Z^{i}W^{j}\right)= c_{ij},\qquad
\text{2.  }\ \sum_{j<i}c_{ij}^2\leq 1,\qquad
\text{3.  }\ \E\left( W^{i} W^{j}\right)= \delta_{ij},\qquad
\text{4.  }\ Z^{i}=\sum_{j\leq i}c_{ij}\ W^{j}.
\end{align*}
\end{lem}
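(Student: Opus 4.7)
The plan is to prove all four assertions simultaneously by strong induction on $i$. The base case $i=1$ is immediate from $c_{11}=1$, $W^1 = Z^1$, and $\E(Z^1)^2 = 1$ (claim 2 being vacuous). So fix $i\geq 2$ and assume that 1--4 hold whenever the largest index appearing is at most $i-1$.

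I would first establish claim 1 for pairs $(i,j)$ with $j<i$ by splitting on whether $c_{jj}$ vanishes. If $c_{jj}=0$ then $W^j=U^j$ is (by Assumption \ref{U}) a mean-zero Bernoulli independent of $Z^i$, so $\E(Z^iW^j)=0=c_{ij}$. If $c_{jj}>0$, substitute the definition of $W^j$ and use the induction hypothesis for claim 1 at level $j<i$:
\[
\E(Z^iW^j)=\frac{1}{c_{jj}}\Bigl(\E(Z^iZ^j)-\sum_{l<j}c_{jl}\E(Z^iW^l)\Bigr)=\frac{1}{c_{jj}}\Bigl(\rho_{ij}-\sum_{l<j}c_{jl}c_{il}\Bigr)=c_{ij}.
\]
Next I introduce the residual $Y^i:=Z^i-\sum_{j<i}c_{ij}W^j$ and compute its second moment using $\E(Z^i)^2=1$, the newly proved case of claim 1, and the inductive orthonormality (claim 3 at levels $<i$):
\[
\E(Y^i)^2=1-2\sum_{j<i}c_{ij}^2+\sum_{j<i}c_{ij}^2=1-\sum_{j<i}c_{ij}^2.
\]
Non-negativity of $\E(Y^i)^2$ gives claim 2 and ensures that $c_{ii}=(1-\sum_{j<i}c_{ij}^2)^{1/2}=\bigl(\E(Y^i)^2\bigr)^{1/2}$ is well defined.

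For claim 4, if $c_{ii}>0$ the definition $W^i=Y^i/c_{ii}$ rearranges to $Z^i=\sum_{j\le i}c_{ij}W^j$; if $c_{ii}=0$ then $\E(Y^i)^2=0$, hence $Y^i=0$ a.s.\ and the same identity holds (with the $j=i$ term zero). Finally, for claim 3 I treat the two cases separately. When $c_{ii}>0$, for $j<i$ I compute
\[
\E(W^iW^j)=\frac{1}{c_{ii}}\Bigl(\E(Z^iW^j)-\sum_{l<i}c_{il}\E(W^lW^j)\Bigr)=\frac{1}{c_{ii}}(c_{ij}-c_{ij})=0
\]
using claim 1 at $(i,j)$ and the inductive orthonormality; and for $j=i$,
\[
\E(W^i)^2=\frac{1}{c_{ii}^2}\E(Y^i)^2=1.
\]
When $c_{ii}=0$, the definition $W^i=U^i$ together with the independence of $U^i$ from $\sigma(Z^1,\dots,Z^{i-1})\supset\sigma(W^1,\dots,W^{i-1})$ and $\E(U^i)=0,\ \E(U^i)^2=1$ give $\E(W^iW^j)=\delta_{ij}$ directly. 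This closes the induction.

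The only delicate point is the bookkeeping in the degenerate branch $c_{ii}=0$: one must verify both that $Z^i$ already lies in the span of $W^1,\dots,W^{i-1}$ (which is precisely what $\E(Y^i)^2=0$ delivers) and that the Bernoulli replacement $U^i$ is orthogonal to everything that came before, which is why Assumption \ref{U} places the $U^{(n),i}_k$ independent of $S^{(n)}$.
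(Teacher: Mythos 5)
Your argument is correct and follows essentially the same scheme as the paper's proof: an outer induction on $i$ with an inner induction on $j$ for claim 1 (your appeal to ``the induction hypothesis for claim 1'' when computing $\E(Z^iW^l)=c_{il}$ for $l<j$ is really this inner induction, since the pair $(i,l)$ involves the index $i$ itself and is not covered by the outer hypothesis), the same case split on $c_{jj}=0$ versus $c_{jj}>0$, and the same identity $\E\bigl(Z^i-\sum_{j<i}c_{ij}W^j\bigr)^2=1-\sum_{j<i}c_{ij}^2$ to deliver claims 2--4. The only item you omit is the case $j=i$ of claim 1, i.e.\ $\E(Z^iW^i)=c_{ii}$, which the paper notes follows in the same way (from $W^i=Y^i/c_{ii}$ when $c_{ii}>0$, and from the independence and centering of $U^i$ when $c_{ii}=0$).
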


\begin{proof}
We proceed by induction over $i$. For $i=1$ we have $c_{11}\equiv 1$ and $ W^{1}=Z^{1}$, which trivially gives 1.-4.

Now assume that 1.-4.~are true up to index $i-1$; in particular $c_{jj}$ and $ W^{j}$ are well defined for $j<i$. We first show 1.~for indices $i$ and $j<i$. 
This will be done by induction over $j$. For $j=1$, we have by definition
\[\E\left(Z^{i} W^{1}\right)=\E\left(Z^{i}Z^{1}\right)=\rho_{i1}=c_{i1}.\]
Now consider an arbitrary $j<i$ and suppose that the claim is true up to $j-1$. If $c_{jj}=0$, then 
\[\E\left(Z^{i}W^{j}\right)=\E\left(Z^{i}U^{j}\right)=0=c_{ij}.\]
If $c_{jj}>0$, then by definition and the induction hypothesis
\begin{eqnarray*}
\E\left(Z^{i} W^{j}\right)=
\frac{1}{c_{jj}}\E\left(Z^{i}\left(Z^{j}-\sum_{l\leq j-1}c_{jl} W^{l}\right)\right)
= \frac{1}{c_{jj}}\left(\rho_{ij}-\sum_{l\leq j-1}c_{il}c_{jl}\right)
=c_{ij}.
\end{eqnarray*}
This implies that
\begin{eqnarray*}
0&\leq& \E\left(Z^{i}-\sum_{j<i}c_{ij} W^{j}\right)^2
=\left(1-2\sum_{j<i}c_{ij}^2\right)+\E\left(\sum_{j<i} c_{ij}W^{j}\right)^2=1-\sum_{j<i}c_{ij}^2,
\end{eqnarray*}
where the last equality follows from part 3.~of the induction hypothesis. This proves 2.
Moreover, $\E\left(Z^{i} W^{i}\right)=c_{ij}$ for $j=i$ follows now in the same way as above for $j<i$. This completes the proof of 1.

Next we show 3. 
If $c_{ii}=0$, the claim is trivial because $U^{i}$ is independent of everything else. If $c_{ii}>0$,  then for all $j<i$ by definition and the induction hypothesis,
\begin{eqnarray*}
\E\left( W^{i} W^{j}\right)=\frac{1}{c_{ii}}\E\left(\left(Z^{i}-\sum_{l<i}c_{il} W^{l}\right) W^{j}\right)
=\frac{1}{c_{ii}}\left[\E\left(Z^{i} W^{j}\right)-c_{ij}\right]\stackrel{1.}{=}0
\end{eqnarray*}
as well as
\begin{eqnarray*}
\E\left( W^{i}\right)^2=\frac{1}{c_{ii}^2}\cdot\E\left(Z^{i}-\sum_{j<i}c_{ij} W^{j}\right)^2
=\frac{1}{c_{ii}^2}\left(1-\sum_{j<i}c_{ij}^2\right)
=1.
\end{eqnarray*} 
Thus, 3.~is proven. It remains to show 4. If $c_{ii}\neq0$, 4.~is trivial. Hence, suppose that $c_{ii}=0$. Then,
\begin{eqnarray*}
\E\left(Z^{i}-\sum_{j<i}c_{ij}W^{i}\right)^2=\left(1-\sum_{j<i}c_{ij}^2\right)=c_{ii}^2=0,
\end{eqnarray*}
which shows that 4.~is also true in this case.
\end{proof}

Next we define for all $i \in \N$, $j\leq i$ numbers $\alpha_{ij}$ iteratively as follows:
\begin{align}
&\text{For }i=1,2,3,4,\dots:\notag\\
&\qquad \alpha_{ii}:=\begin{cases}\frac{1}{c_{ii}}&:\ c_{ii}>0\\1&:\ c_{ii}=0\end{cases}.\notag\\
&\qquad\text{For }j=i-1,i-2,\dots,1:\notag\\
&\qquad\qquad\text{If }c_{jj}=0,\notag\\
&\qquad\qquad\quad\text{then } \alpha_{ij}:=0.\notag\\
&\qquad\qquad\text{Else }\notag\\
&\qquad\qquad\quad \alpha_{ij}:=-\frac{1}{c_{jj}}\left(\sum_{j<l\leq i}\alpha_{il} c_{lj}\right).\label{asum}\\
&\qquad\text{Next $j$}.\notag\\
&\text{Next }i.\notag
\end{align}

Note that $\left(\alpha_{ij}\right)_{i\in\N,\ j\leq i}$ can be regarded as the "inverse" of  $\left(c_{ij}\right)_{i\in\N,\ j\leq i}$ in the following sense: for fixed $i,j\in\N$ with $j\leq i$ one has
\[\sum_{j\leq l\leq i}\alpha_{il}c_{lj} \stackrel{(\ref{asum})}{=}\1_{\left\{c_{jj}=0\right\}}\sum_{j\leq l\leq i}\alpha_{il}c_{lj}+\1_{\left\{c_{jj}^{(n)}>0\right\}}\delta_{ij}=\1_{\left\{c_{ii}>0\right\}}\delta_{ij},\]
where the last equality follows from the fact that $c_{lj}=0$ for all $l>j$ if $c_{jj}=0$. Hence, if $c_{ii}>0$, then
\begin{equation}\label{WZ}
\begin{split}
\sum_{j\leq i}\alpha_{ij}\ Z^{j}=\sum_{j\leq i}\alpha_{ij}\sum_{l\leq j}c_{jl} W^{l}
=\sum_{l\leq i}W^{l}\sum_{l\leq j\leq i}\alpha_{ij}c_{jl}=W^{i}.
\end{split}
\end{equation}

\section{Integration with respect to $Y^{(n)}$ and $Y$}\label{appendix-integration}

In this appendix we introduce the stochastic integrals with respect to $Y^{(n)}$ and $Y$. The concept of integration follows \cite{KP2}. 
We recall the definition of the random variables $\delta W^{(n),i}_k$ in (\ref{deltaW}) and put  ($i\in\N,\ t\in [0,T]$),
\[
	W^{(n),i}(t):=W^{(n)}(f_i,t)=\sum_{k=1}^{\lfloor t/\Delta t^{(n)}\rfloor}\delta W^{(n),i}_k\qquad\text{and}\qquad W^i:=W(f_i,\cdot)
\]
where $W$ is a cylindrical Brownian motion. Thus, the random variables 
the $W^i, \ i\in \N,$ are independent Brownian motions and each $Y^{(n)}=\left(Y^{(n)}_t\right)_{t\in[0,T]}$ is adapted to the filtration $\left(\hat{\F}^{(n)}_t\right)_{t\in[0,T]}$ defined via
\[
	\hat{\F}^{(n)}_t:=\F^{(n)}_k,\quad t_k^{(n)}\leq t<t_{k+1}^{(n)}.
\]
As integrands for $Y^{(n)}$ we consider c\`adl\`ag, $\left(\hat{\F}^{(n)}_t\right)_{t\in[0,T]}$-adapted processes which take their values in the space
\[\hat{E}:=\R\times L^2(\R_+;\R)\times\R\times L^2(\R_+;\R)\times L^2\left(\R^2_+;\R\right)\times L^2\left(\R_+;\R\right),\]
endowed with the norm
\[\left\Vert(a_1,a_2,a_3,a_4,a_5,a_6)\right\Vert_{\hat{E}}:=|a_1|+\left\Vert a_2\right\Vert_{L^2\left(\R_+\right)}+|a_3|+\left\Vert a_4\right\Vert_{L^2\left(\R_+\right)}+\left\Vert a_5\right\Vert_{L^2\left(\R^2_+\right)}+\left\Vert a_6\right\Vert_{L^2\left(\R_+\right)}.\]
We define $\mathcal{S}^{(n)}_{\hat{E}}$ as the set of processes $a^{(n)}: \Omega \times [0,T] \times \mathbb{R} \times \mathbb{R} \ra \hat{E}$ that are  of the form 
\begin{equation}\label{integrand}
\begin{split}
 & a^{(n)}(t;x,y) :=   \\ 
& \left(a^{1,(n)}(t),\sum_ja^{2,(n)}_j(t)f_j(y),a^{3,(n)}(t),\sum_ia^{4,(n)}_i(t)f_i(x),\sum_{ij}a^{5,(n)}_{ij}(t)f_i(x)f_j(y),\sum_ia^{6,(n)}_i(t)f_i(x)\right)
\end{split}
\end{equation}
for c\`adl\`ag and $\left(\hat{\F}^{(n)}_t\right)$-adapted processes $a^{1,(n)},a^{2,(n)}_j,a^{3,(n)},a^{4,(n)}_i,a^{5,(n)}_{ij},a^{6,(n)}_i$, $i,j\in\N$, of which all but finitely many are zero. For $a^{(n)}\in\mathcal{S}^{(n)}_{\hat{E}}$ with the representation as above, the integral with respect to $Y^{(n)}$ is defined as
\begin{equation*}\label{integral}
\begin{split}
& \int_0^t a^{(n)}(u-)dY^{(n)}(u):=\\
&\quad\left(\int_0^ta^{1,(n)}(u-)dZ^{(n)}(u)+\sum_j\int_0^ta^{2,(n)}_j(u-)dW^{(n),j}(u)+\sum_{k=1}^{\lfloor t/\Delta t^{(n)}\rfloor}a^{3,(n)}\left(t_k^{(n)}-\right)\Delta t^{(n)}, \right.\qquad\qquad\qquad\qquad\\ 
&\left.\qquad\quad\sum_{i}f_i\int_0^ta^{4,(n)}_{i}(u-)dZ^{(n)}(u)+\sum_{ij}f_i\int_0^ta^{5,(n)}_{ij}(u-)dW^{(n),j}(u)+\sum_if_i\sum_{k=1}^{\lfloor t/\Delta t^{(n)}\rfloor}a^{6,(n)}_i\left(t_k^{(n)}-\right)\Delta t^{(n)}\right).
\end{split}
\end{equation*}

\begin{thm}\label{ut}
Suppose that Assumptions \ref{prob1}, \ref{scaling}, \ref{density}, \ref{gi}, \ref{hi}, and \ref{U} hold. Then the sequence $Y^{(n)}$ is uniformly tight, i.e.
\[\mathcal{H}_t:=\bigcup_n\left\{\left\Vert \int_0^ta^{(n)}(u-)dY^{(n)}(u)\right\Vert_E:\ a^{(n)}\in \mathcal{S}^{(n)}_{\hat{E}},\ \sup_{u\leq t}\left\Vert a^{(n)}(u)\right\Vert_{\hat{E}}\leq1\ \text{a.s.}\right\}\]
is stochastically bounded for all $t\in[0,T]$.
\end{thm}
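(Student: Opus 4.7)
The plan is to control each of the six pieces of the integral separately, bound the second moment of its $E$-norm uniformly in $n$, and then invoke Markov's inequality. Since $\mathcal{S}^{(n)}_{\hat{E}}$ consists of finite sums, all interchanges of summation and expectation below are unproblematic.

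Fix $t\in[0,T]$, take $a^{(n)}\in\mathcal{S}^{(n)}_{\hat{E}}$ with $\sup_{u\le t}\|a^{(n)}(u)\|_{\hat{E}}\le 1$. Then pointwise in $(u,\omega)$
\[
|a^{1,(n)}|,\ |a^{3,(n)}|,\ \Bigl(\sum_j (a^{2,(n)}_j)^2\Bigr)^{1/2},\ \Bigl(\sum_i (a^{4,(n)}_i)^2\Bigr)^{1/2},\ \Bigl(\sum_{ij}(a^{5,(n)}_{ij})^2\Bigr)^{1/2},\ \Bigl(\sum_i(a^{6,(n)}_i)^2\Bigr)^{1/2}\le 1
\]
by orthonormality of the Haar basis $(f_i)$ in $L^2(\R_+)$ and of $(f_i\otimes f_j)$ in $L^2(\R^2_+)$. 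From Section~\ref{price} the predictable quadratic variation of $Z^{(n)}$ satisfies $\E[(\delta Z_k^{(n)})^2|\F_{k-1}^{(n)}]\le \Delta t^{(n)}$, and Corollary~\ref{vier} gives the key orthogonality $\E[\delta W^{(n),i}_k\delta W^{(n),j}_k|\F_{k-1}^{(n)}]=\Delta t^{(n)}\delta_{ij}$.

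For the first (real-valued) coordinate of the integral, the three terms are controlled separately. For the $Z^{(n)}$-integral,
\[
\E\Bigl[\Bigl(\int_0^t a^{1,(n)}(u-)dZ^{(n)}(u)\Bigr)^2\Bigr]=\sum_k \E\bigl[(a^{1,(n)}(t_{k-1}^{(n)}))^2\E[(\delta Z_k^{(n)})^2|\F_{k-1}^{(n)}]\bigr]\le t.
\]
For the $W^{(n)}$-integral, using the above orthogonality to kill cross terms in $j$,
\[
\E\Bigl[\Bigl(\sum_j\int_0^t a^{2,(n)}_j(u-)dW^{(n),j}(u)\Bigr)^2\Bigr]=\sum_k \Delta t^{(n)}\E\Bigl[\sum_j (a^{2,(n)}_j(t_{k-1}^{(n)}))^2\Bigr]\le t.
\]
The drift term is deterministically bounded: $|\sum_k a^{3,(n)}(t_{k-1}^{(n)}-)\Delta t^{(n)}|\le t$.

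For the $L^2(\R_+)$-valued coordinate, use $\|\sum_i f_i\xi_i\|_{L^2}^2=\sum_i \xi_i^2$ to reduce to scalar estimates. The $Z^{(n)}$-part gives $\E[\|\sum_i f_i\int_0^t a_i^{4,(n)}dZ^{(n)}\|_{L^2}^2]=\sum_i\sum_k\E[(a_i^{4,(n)})^2\E[(\delta Z_k^{(n)})^2|\F_{k-1}^{(n)}]]\le\sum_k\Delta t^{(n)}\E[\sum_i(a_i^{4,(n)})^2]\le t$. For the double sum, again the orthogonality of the $\delta W^{(n),j}_k$ in $j$ eliminates the cross terms, so
\[
\E\Bigl[\Bigl\|\sum_{ij}f_i\int_0^t a^{5,(n)}_{ij}(u-)dW^{(n),j}(u)\Bigr\|_{L^2}^2\Bigr]=\sum_k\Delta t^{(n)}\E\Bigl[\sum_{ij}(a^{5,(n)}_{ij}(t_{k-1}^{(n)}))^2\Bigr]\le t.
\]
For the drift term we apply Cauchy--Schwarz in $k$ and then in $i$:
\[
\Bigl\|\sum_i f_i\sum_k a^{6,(n)}_i\Delta t^{(n)}\Bigr\|_{L^2}^2=\sum_i\Bigl(\sum_k a^{6,(n)}_i\Delta t^{(n)}\Bigr)^2\le t\sum_k \Delta t^{(n)}\sum_i (a^{6,(n)}_i)^2\le t^2.
\]

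Combining the six estimates, there is a constant $C_t$, depending only on $t$, such that
\[
\E\Bigl[\Bigl\|\int_0^t a^{(n)}(u-)dY^{(n)}(u)\Bigr\|_E^2\Bigr]\le C_t
\]
for every $a^{(n)}$ with $\sup_{u\le t}\|a^{(n)}(u)\|_{\hat{E}}\le 1$ and every $n\in\N$. Markov's inequality yields stochastic boundedness of $\mathcal{H}_t$. The only subtle step is the one that exploits the orthogonality $\E[\delta W^{(n),i}_k\delta W^{(n),j}_k|\F_{k-1}^{(n)}]=\Delta t^{(n)}\delta_{ij}$ from Corollary~\ref{vier}, which is precisely what lets us pass from the $\hat{E}$-norm bound on the integrand to an $E$-norm bound on the integral without accumulating correlation corrections across basis functions.
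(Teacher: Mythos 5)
Your proposal is correct and follows essentially the same route as the paper: decompose the integral into its six components, bound each by a moment computation using $\E[(\delta Z_k^{(n)})^2\mid\F_{k-1}^{(n)}]\le\Delta t^{(n)}$ and the orthogonality $\E[\delta W^{(n),i}_k\delta W^{(n),j}_k\mid\F_{k-1}^{(n)}]=\Delta t^{(n)}\delta_{ij}$ from Corollary \ref{vier} together with Parseval for the Haar basis, and conclude by Markov's inequality. The only cosmetic difference is that you bound the second moment of the $E$-norm throughout, whereas the paper bounds the first moment (passing through second moments via Jensen for the martingale terms); both give the required stochastic boundedness.
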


\begin{proof}
It is sufficient to show that for any $t\in[0,T]$ there exists a constant $C(t)$ such that for all $n\in\N$ and $a^{(n)}\in\mathcal{S}^{(n)}_{\hat{E}}$ with $\sup_{u\leq t}\left\Vert a^{(n)}(u)\right\Vert_{\hat{E}}\leq 1$,
\[\E\left\Vert \int_0^ta^{(n)}(u-)dY^{(n)}(u)\right\Vert_E\leq C(t).\]
Let $a^{(n)}\in\mathcal{S}^{(n)}_{\hat{E}}$ satisfy $\sup_{u\leq t}\left\Vert a^{(n)}(u)\right\Vert_{\hat{E}}\leq 1$. Thus for all $u\leq t$, 
{\small
\[\max\left\{\left|a^{1,(n)}(u)\right|,\sum_j\left(a^{2,(n)}_j(u)\right)^2,\left|a^{3,(n)}(u)\right|,\sum_i\left(a^{4,(n)}_i(u)\right)^2,\sum_{ij}\left(a^{5,(n)}_{ij}(u)\right)^2,\sum_i\left(a^{6,(n)}_i(u)\right)^2\right\}\leq 1\quad \text{a.s.}
\]}
This implies that
\[\E\left|\sum_{k=1}^{\lfloor t/\Delta t^{(n)}\rfloor}a^{3,(n)}\left(t^{(n)}_k-\right)\Delta t^{(n)}\right|\leq\Delta t^{(n)}\sum_{k=1}^{\lfloor t/\Delta t^{(n)}\rfloor}\E\left|a^{3,(n)}\left(t^{(n)}_k-\right)\right|\leq t\]
and, since only finitely many of the $a_{i}^{6,(n)}$ are assumed to be unequal zero,
\[\E\left\Vert \sum_{i}f_i\sum_{k=1}^{\lfloor t/\Delta t^{(n)}\rfloor}a^{6,(n)}_{i}\left(t^{(n)}_k-\right)\Delta t^{(n)}\right\Vert_{L^2}\leq\sum_{k=1}^{\lfloor t/\Delta t^{(n)}\rfloor}\Delta t^{(n)}\E\left(\sum_{i} \left(a^{6,(n)}_{i}\left(t_k^{(n)}-\right)\right)^2\right)^{1/2}\leq t.\]
For the other four terms recall that $a^{(n)}$ is $\left(\hat{\F}^{(n)}_t\right)$-adapted. Thus $a^{(n)}\left(t_k^{(n)}-\right)\in{\F}^{(n)}_{k-1}$ for $k=1,\dots, T_n$. So,
\begin{eqnarray*}
	\E\left(\int_0^ta^{1,(n)}(s-)dZ^{(n)}(s)\right)^2 &=& \E\left(\sum_{k=1}^{\lfloor t/\Delta t^{(n)}\rfloor}a^{1,(n)}\left(t^{(n)}_k-\right)\delta Z_k^{(n)}\right)^2 \\ &\leq& \Delta t^{(n)}\sum_{k=1}^{\lfloor t/\Delta t^{(n)}\rfloor}\E\left(a^{1,(n)}\left(t^{(n)}_k-\right)\right)^2 \leq t
\end{eqnarray*}
and, since only finitely many of the $a_{ij}^{4,(n)}$ are assumed to be unequal zero,
\begin{eqnarray*}
\E\left\Vert\sum_if_i\int_0^ta^{4,(n)}_i(s-)dZ^{(n)}(s)\right\Vert_{L^2}^2&=&\sum_i\E\left(\sum_{k=1}^{\lfloor t/\Delta t^{(n)}\rfloor}a^{4,(n)}_i\left(t^{(n)}_k-\right)\delta Z_k^{(n)}\right)^2\\
&\leq& \Delta t^{(n)}\sum_{k=1}^{\lfloor t/\Delta t^{(n)}\rfloor}\sum_i\E\left(a^{4,(n)}_i\left(t^{(n)}_k-\right)\right)^2 \leq t.
\end{eqnarray*}
Similarly, since only finitely many of the $a_{j}^{2,(n)}$ and $a_{ij}^{5,(n)}$ are assumed to be unequal zero,
\begin{eqnarray*}
\E\left|\sum_{j}\int_0^ta^{2,(n)}_{j}(u-)dW^{(n),j}\left(u\right)\right|
&\leq& \sum_{k=1}^{\lfloor t/\Delta t^{(n)}\rfloor} \E\left|\sum_ja^{2,(n)}_{j}\left(t_k^{(n)}-\right)\delta W^{(n),j}_k\right|\\
&\leq&\sum_{k=1}^{\lfloor t/\Delta t^{(n)}\rfloor}\left(\E\left[\sum_{j}a^{2,(n)}_{j}\left(t_k^{(n)}-\right)\delta W^{(n),j}_k\right]^2\right)^{1/2} \\
&\leq& \sum_{k=1}^{\lfloor t/\Delta t^{(n)}\rfloor}\Delta t^{(n)}\E\sum_j\left(a^{2,(n)}_{j}\left(t_k^{(n)}-\right)\right)^2 \leq t
\end{eqnarray*}
and 
\begin{eqnarray*}
\E\left\Vert \sum_{ij}f_i\int_0^ta^{5,(n)}_{ij}(u-)dW^{(n),j}\left(u\right)\right\Vert_{L^2}^2 &=&
\sum_{i} \E\left(\sum_j\int_0^ta^{5,(n)}_{ij}(u-)dW^{(n),j}(u)\right)^2\\
&=& \sum_{i}\sum_{k=1}^{\lfloor t/\Delta t^{(n)}\rfloor} \E\left(\sum_ja^{5,(n)}_{ij}\left(t_k^{(n)}-\right)\delta W^{(n),j}_k\right)^2\\
&=& \sum_{ij} \Delta t^{(n)}\sum_{k=1}^{\lfloor t/\Delta t^{(n)}\rfloor}\E\left(a^{5,(n)}_{ij}\left(t_k^{(n)}-\right)\right)^2 \leq t.
\end{eqnarray*}
\end{proof}

The preceding theorem implies that $Y^{(n)}=\left(Y^{(n)}_t\right)_{t\in[0,T]}$ is a standard $(E,\hat{E})$-semimartingale in the sense of \cite{KP2}. Therefore, the definition of the stochastic integral $\int a^{(n)}_-dY^{(n)}$ extends to all c\`adl\`ag, adapted, uniformly bounded, $\hat{E}$-valued processes $a^{(n)}$, where the resulting infinite sums can be shown to exist as limits in probability. 

Similarly, if $(\F_t)_{t\in[0,T]}$ is any filtration to which $Y=(Y_t)_{t\in[0,T]}$ is adapted, we will denote by $\mathcal{S}_{\hat{E}}$ the set of $\hat{E}$-valued processes $a$ of the form (\ref{integrand}), for which all $a^1,a^2_j,a^3,a^4_i,a^5_{ij},a^6_i,\ i,j\in\N$, are c\`adl\`ag, $\left(\F_t\right)$-adapted processes, of which all but finitely many are zero. The integral of $a\in\mathcal{S}_{\hat{E}}$ with respect to $Y$ is then defined by, 
\begin{eqnarray*}
\int_0^t a(u-)dY(u)&:=&\left(\int_0^ta^1(u-)dZ(u)+\sum_{j}\int_0^ta^2(u-)dW^j(u)+\int_0^ta^3(u)du,\right.\\
&&\left.\qquad\qquad\sum_i\int_0^ta^4_i(u-)dZ(u)+\sum_{ij}f_i\int_0^ta^3_{ij}(u-)dW^j(u)+\sum_if_i\int_0^ta^6_i(u)du\right).
\end{eqnarray*}
Analogously as above, one can show that $Y$ is also an $(E,\hat{E})$-semimartingale and thus we can again extend the definition of the integral $\int a_-dY$ to all c\`adl\`ag, $(\F_t)$-adapted $\hat{E}_{loc}$-valued processes $a$.

In view of (\ref{SDE_BV}) we only need to consider integrands of the form (\ref{integrand}), for which $a^{2,(n)}_j\equiv0$ and $a^{4,(n)}_i\equiv0$ for all $i,j\in\N$. Moreover, we will further extend the definition of the integral $\int a^{(n)}(u-)dY^{(n)}(u)$ allowing as integrands all c\`adl\`ag, adapted processes $a^{(n)}$ which take their values in the set
\[\hat{E}_{loc}:=\R\times\{0\}\times\R\times \{0\}\times L_{loc,diag}^2(\R_+^2;\R)\times L_{loc}^2(\R_+;\R),\]
where 
\[L_{loc,diag}^2(\R_+;\R):=\left\{\left.h(x,y)=\sum_i\sum_{j\leq i}h_{ij}f_i(x)f_j(y)\ \right|\ \sum_{i\in\mathcal{I}_m}\sum_{j\leq i}h_{ij}^2<\infty \ \forall\ m\in\N\right\}.\]
The definition of the integral will be extended as follows: for any $a^{(n)}\in\hat{E}_{loc}$, the process 
\[\int_0^t a^{(n)}(u-)dY^{(n)}(u),\ t\in[0,T],\]
is defined as the unique c\`adl\`ag $E_{loc}$-valued process $X=(X^1,X^2)$ (up to indistinguishability) such that for all $m\in\N$ and rational $t\in[0,T]$,
\begin{equation}\label{integralm}
\left(X_t^1,\1_{[0,m]}X^2_t\right)=\int_0^ta^{(n),m}(u-)dY^{(n)}(u),
\end{equation}
where the $\hat{E}$-valued processes $a^{(n),m},\ m\in\N,$ are defined as the projections of $a^{(n)}$ on the subspace $\{f_i:\ i\in\mathcal{I}_m\}$ with $\mathcal{I}_m$ defined in (\ref{im}):
\[a^{(n),m}(t;x,y):=\left(a^{1,(n)}(t),\ 0,\ a^{3,(n)}(t),\ 0,\ \sum_{i\in\mathcal{I}_m}\sum_{j\leq i}a_{ij}^{5,(n)}(t)f_i(x)f_j(y),\ \sum_{i\in\mathcal{I}_m}a_i^{6,(n)}(t)f_i(x)\right).\]

{\small

\bibliographystyle{abbrv}

\bibliography{LOBLit}
}

\end{document}